\documentclass[12pt,reqno]{amsart}

\oddsidemargin=0cm
\evensidemargin=0cm
\textwidth=16cm
\textheight=22cm

\usepackage{amsthm,amsmath,amssymb,amsfonts}
\usepackage{epsfig}
\usepackage{url,hyperref}
\usepackage{tikz}
\usetikzlibrary{calc}
\usetikzlibrary{patterns}

\makeatletter \@addtoreset{equation}{section}

\makeatother

\renewcommand{\subsection}[1]{\refstepcounter{subsection}
\par\medskip\par\noindent\textbf{\thesubsection.~#1}}

\binoppenalty=10000
\relpenalty=10000
\setcounter{MaxMatrixCols}{20}
\setcounter{tocdepth}{3}

\makeatletter
\def\Maketitle{{\def\newpage{}\maketitle}}
\def\Appendix{\appendix
  \def\@seccntbformat##1{Appendix~\csname the##1\endcsname.~~}}
\makeatother

\newtheorem{Proposition}{Proposition}[section]
\newtheorem{Lemma}{Lemma}[section]
\newtheorem*{Lemma*}{Lemma}

\newtheorem{Theorem}{Theorem}
\newtheorem*{Theorem*}{Theorem}

\theoremstyle{definition}

\newtheorem{Remark}{Remark}[section]
\newtheorem{Example}{Example}[section]

\newcommand{\rto}{\!\to\!}
\def\qbinom#1#2#3{\genfrac[]{0pt}{}{#1}{#2}_{#3}}
\newcommand{\calF}{\mathcal{F}}

\begin{document}


\begin{title}[Plane partitions with a ``pit'']
{Plane partitions with a ``pit'': generating functions and representation theory}
\end{title}

\author{M. Bershtein, B. Feigin, G. Merzon}

\address{Landau Institute for Theoretical Physics, Chernogolovka, Russia
\newline 
Skolkovo Institute of Science and Technology, Moscow, Russia,
\newline
National Research University Higher School of Economics, Moscow, Russia,
\newline 
Institute for Information Transmission Problems,  Moscow, Russia
\newline 
Independent University of Moscow, Moscow, Russia}
\email{mbersht@gmail.com}

\address{National Research University Higher School of Economics, Moscow, Russia}
\email{borfeigin@gmail.com}

\address{Moscow Center for Continuous Mathematical Education, Moscow, Russia}
\email{merzon@mccme.ru}

\begin{abstract}
We study plane partitions satisfying condition $a_{n+1,m+1}=0$ (this condition is called ``pit'') and asymptotic conditions along three coordinate axes. We find the formulas for generating function of such plane partitions.

Such plane partitions label the basis vectors in certain representations of quantum toroidal $\mathfrak{gl}_1$ algebra, therefore our formulas can be 	interpreted as the characters of these representations. The resulting formulas resemble formulas for characters of tensor representations of Lie superalgebra $\mathfrak{gl}_{m|n}$. We discuss representation theoretic interpretation of our formulas using $q$-deformed $W$-algebra $\mathfrak{gl}_{m|n}$.
\end{abstract}

\dedicatory{To Sasha Beilinson on the occasion of his birthday. At least one of the authors owes him a lot.}


\Maketitle

\section{Introduction}

In this paper we study certain problems of enumerative combinatorics of 3d Young diagrams, which are motivated by  representation theory. 

It is convenient to identify 3d Young diagrams with plane partitions, i.e., collection of nonnegative integers  $a_{i,j}$ such that $a_{i,j}\geq a_{i+1,j}$, $a_{i,j}\geq a_{i,j+1}$ and all but a finite number of $a_{i,j}$ equals 0. Later we will also consider more general plane partitions.

Denote by $|a|=\sum a_{i,j}$, i.e., the number of boxes in the corresponding 3d Young diagram. For any set $\mathcal{A}$ of plane partitions, define its generating function by $\sum_{a \in \mathcal{A}} q^{|a|}$. Such functions were extensively studied in enumerative combinatorics, for example, one of MacMahon's formulas has the form (see e.g. \cite[1.5 ex. 13(d)]{Macdonald Book})
\[
\sum_{\{a|a_{n+1, 1}=0\}}q^{|a|}=q^{-\binom{n}{3}} \frac{V(1,q,\ldots,q^{n-1})}{(q)_\infty^n},
\]
where $V(x_1,\ldots,x_n)=\prod_{i<j}(x_i-x_j)$ is the Vandermonde product and $(q)_\infty=\prod_{k=1}^\infty (1-q^k)$. The limit $n \rightarrow \infty$ gives well-known MacMahon formula for the generating function of all plane partitions: $\sum_{a}q^{|a|}=\prod_{k=1}^\infty (1-q^k)^{-k}$.

We study plane partitions satisfying the condition 
\begin{align}
a_{n+1,m+1}=0.  \label{eq:pit}
\end{align}
We will call such condition ``pit'' in box $(n+1,m+1)$. Moreover we will consider plane partitions $a=\{a_{ij}\}$ with an infinite number of non-zero $a_{i,j}$ and some of $a_{i,j}$ equal to $\infty$, satisfying the following asymptotic conditions
\begin{align}
\textbf{1.}\; \lim_{j\rightarrow \infty} a_{i,j} = \nu_i, \qquad &\textbf{2.}\; \lim_{i\rightarrow \infty} a_{i,j} = \mu_j, \qquad \textbf{3.}\; a_{i,j} = \infty\; \text{iff $(i,j) \in \lambda$,} \label{eq:asymp} 
\end{align}
where $\nu,\mu, \lambda$ are partitions (see Fig. \ref{fig:planepart}).

\begin{figure}[h]
\begin{tikzpicture}[scale=.32,nodes={font=\small},baseline={([yshift=-.5ex]current bounding box.center)}]
\foreach \m [count=\y] in {%
         {15,15,15,15,4,4,3,3,3,3,3,3,3},{15,15,15,5,4,2,1,1,1,1,1,1,1},
         {15,15,5,4,4,1},{5,4,4,2,1},{4,2,1},
         {2,2,1},{2,2,1},{2,2,1},{2,2,1},{2,2,1},{2,2,1},{2,2,1},{2,2,1}%
  } {
  \foreach \n [count=\x] in \m {
  \ifnum \n>0
      \foreach \z in {1,...,\n}{
        \draw[fill=gray!30] (\x+1,\y,\z)--(\x+1,\y+1,\z)--(\x+1, \y+1, \z-1)--(\x+1, \y, \z-1)--cycle;
        \draw[fill=gray!40] (\x,\y+1,\z)--(\x+1,\y+1,\z)--(\x+1, \y+1, \z-1)--(\x, \y+1, \z-1)--cycle;
        \draw[fill=gray!10] (\x,\y,\z)  --(\x+1,\y,\z)  --(\x+1, \y+1, \z)  --(\x, \y+1, \z)--cycle;  
      }
 \fi
 }
}
\draw[thick] (5,1,4)--(5,1,5)--(6,1,5) node[below right=-1mm]{$m=n=4$}--(6,1,4)--cycle (5,1,4)--(6,1,5) (5,1,5)--(6,1,4);
\draw[thick] (14,1,0)--(14,3,0)--(14,3,1)--(14,2,1)--(14,2,3)--(14,1,3)--cycle;
\draw[thick] (1,14,0)--(4,14,0)--(4,14,1)--(3,14,1)--(3,14,2)--(1,14,2)--cycle;
\draw[thick] (1,1,15)--(5,1,15)--(5,2,15)--(4,2,15)--(4,3,15)--(3,3,15)--(3,4,15)--(1,4,15)--cycle;
\draw[->] (14,1,0) node[below=5mm,right=-2mm] {$\nu=(2,1,1)$}--(15.5,1,0) node[right] {\small 2};
\draw[->] (1,14,0) node[above=4mm,right=1mm] {$\lambda=(3,2)$}--(1,15.5,0) node[above] {\small 3};
\draw[->] (1,1,15) node[below=4mm,right=-2mm] {$\mu=(3,3,2,1)$}--(1,1,18) node[below left] {\small 1};
\end{tikzpicture}%
\quad%
\begin{tikzpicture}[scale=.6,nodes={font=\small},baseline={([yshift=-.5ex]current bounding box.center)}]
\draw[thick] (4,12)--(5,12)--(5,11)--(4,11)--cycle (4,12)--(5,11) (4,11)--(5,12);
\draw[thick] (0,4)--(0,14)--(2,14)--(2,15)--(3,15)--(3,16)--(11,16) (11,12)--(4,12)--(4,4);
\clip (0,4)--(0,16)--(11,16)--(11,12)--(4,12)--(4,4)--cycle;
\draw[help lines] (0,0) grid (14,16);
\foreach \m [count=\y] in {%
    {\infty,\infty,\infty,4,4,3,2,2,2,\dots,2},
    {\infty,\infty,4,4,3,2,1,1,1,\dots,1},
    {5,4,4,3,3,1,1,1,1,\dots,1},
    {5,4,4,3,3,1,0,0,0,\dots,0},
    {4,3,3,2},
    {3,3,2,1}, {3,3,2,1}, {3,3,2,1}, {3,3,2,1}, {3,3,2,1},
    {\dots,\dots,\dots,\dots}, {3,3,2,1}%
  } {
  \foreach \n [count=\x] in \m {
     \node at (\x-.5,16.5-\y) {$\n$};
  }
}
\end{tikzpicture}
\caption{\label{fig:planepart}}
\end{figure}

We denote by $\chi^{n,m}_{\mu,\nu,\lambda}(q)$ the generating function of plane partitions which satisfy \eqref{eq:pit}, \eqref{eq:asymp} (for the definition of $|a|$ see \eqref{eq:pp:grad}). It follows from these conditions that $l(\nu)\leq n$, $l(\mu)\leq m$, and $\lambda_{n+1} < m+1$.

Note that asymptotic conditions \eqref{eq:asymp} appear in the theory of topological vertex \cite{Okounkov Vafa:2003}. The condition \eqref{eq:pit} also appeared in the strings theory, see \cite{Jafferis}. Our motivation comes from representation theory, which we discuss below.

In some particular cases the formulas for functions $\chi^{n,m}_{\mu,\nu,\lambda}(q)$ were known before. In order to write down the answers we need some notation. By $\rho_n$ we denote the partition $(n-1,n-2,\dots,1,0)$, we omit the index $n$ and write just $\rho$ if the number of parts is clear from the context.
For any partition of no greater then $n$ parts $\lambda=(\lambda_1,\lambda_2,\dots,\lambda_n)$ by $q^{\lambda+\rho}$ we denote $(q^{\lambda_1+n-1},\ldots, q^{\lambda_n})$. By $a_{\lambda+\rho}(x_1,\dots,x_n)$ we denote the antisymmetric polynomial:
\begin{equation}
a_{\lambda+\rho}(x_1,\dots,x_n)=\det\begin{pmatrix}
x_i^{\lambda_j+n-j}
\end{pmatrix}_{i,j=1}^n. \label{eq:a_lambda}
\end{equation}

The formula for $\chi^{n,m}_{\mu,\nu,\lambda}(q)$ is known in the case $m=0$, i.e., when the ``pit'' is located near the ``wall''. Namely
\begin{equation} \label{eq:Wn:char}
\chi_{\varnothing,\nu,\lambda}^{n,0}(q) =
\frac{q^{\sum_{i=1}^n (\lambda_i+n-i)(\nu_{i}+n-i) }}{(q)_\infty^n} a_{\nu+\rho}(q^{-\lambda-\rho}),
\end{equation}
see, for example, \cite[Theorem 4.6]{Feigin_Miwa:2010}. Clearly this is a generalization of the MacMahon formula given above. Another known case is where two asymptotic conditions vanish $\lambda=\mu=\varnothing$, see \cite{Feigin Mutafyan1}, \cite{Feigin Mutafyan2}.

In our paper we find the formula for $\chi^{n,m}_{\mu,\nu,\lambda}(q)$ in general case. Actually we prove three formulas, which are algebraically equivalent but have different form and meaning. They are given in Theorems \ref{th:1}, \ref{th:2}, \ref{th:2'} below. Here we give the simplest (but already new) particular case of Theorem \ref{th:2}
\begin{equation}\label{eq:chi:m=n}
\chi_{\mu,\nu,\varnothing}^{n,n}(q)= \!\!\!\!\sum_{A_1>A_2>\ldots>A_n\geq 0} \!\!\!\!\!\! (-1)^{\sum\limits_{i=1}^n A_i}q^{\sum\limits_{i=1}^n \binom{A_i+1}2} \frac{a_{\nu+\rho}(q^A)a_{\mu+\rho}(q^{-A})}{(q)_\infty^{2n}}, 
\end{equation}
Note that each summand is a product of two expressions on the right side of \eqref{eq:Wn:char} (up to the factor of the form $(-1)^{\ldots}q^{\ldots}$)

Since our three formulas are algebraically equivalent it is enough to prove any of them. We give two different combinatorial proofs, one for Theorem \ref{th:1} and one for Theorem \ref{th:2'}. These proofs are simpler than ones of particular cases given in \cite{Feigin Mutafyan1}, \cite{Feigin Mutafyan2}.  

The first proof is based on a bijection between plane partitions and collections of non crossing paths. The number of such collections is computed using Lindstr\"{o}m--Gessel--Viennot lemma \cite{Lind},\cite{Gessel Viennot}. Such proof gives a determinantal expression for 
$\chi^{n,m}_{\mu,\nu,\lambda}(q)$, see Theorem~\ref{th:1}.

In the second proof we interpret conditions \eqref{eq:pit},\eqref{eq:asymp} as a definition of certain infinite dimensional polyhedron. We compute the generating function of integer points in this polyhedron as a sum of contribution of vertices, using Brion theorem \cite{Brion}. Such proof gives a ``bosonic formula''\footnote{Usually a formula is called bosonic if it equals a linear combination of characters of algebra of polynomials. In our case bosonic formula is a combination of terms $q^\Delta/(q)_\infty^{n+m} $.} for 
$\chi^{n,m}_{\mu,\nu,\lambda}(q)$, see Theorem \ref{th:2'}.

The conditions \eqref{eq:pit},\eqref{eq:asymp} appeared in \cite{Feigin_Miwa:2011} in the context of representation theory of quantum toroidal algebra $U_{\vec{q}}(\ddot{\mathfrak{gl}}_1)$. Namely, plane partitions which satisfy these conditions label a basis in MacMahon modules $\mathcal{N}^{n,m}_{\mu,\nu,\lambda}(v)$ over $U_{\vec{q}}(\ddot{\mathfrak{gl}}_1)$. Therefore $\chi^{n,m}_{\mu,\nu,\lambda}(q)$ is the character of the representation $\mathcal{N}^{n,m}_{\mu,\nu,\lambda}(v)$. It is natural to ask for representation theoretic interpretation of our character formulas. 

We conjecture that there exist resolutions of $\mathcal{N}^{n,m}_{\mu,\nu,\lambda}(v)$ such that their Euler characteristics  coincide with our character formulas. In such cases we say that the resolution \textit{is a materialization} of the character formula. For example, the BGG resolution \cite{BGG} is a materialization of the Weyl character formula. Zelevinsky constructed complex which is a materialization of the Jacobi--Trudi formula for Schur polynomials \cite{Zel}.

Our formulas for functions  $\chi^{n,m}_{\mu,\nu,\lambda}(q)$ resemble the formulas for characters of tensor representations of Lie superalgebra $\mathfrak{gl}_{m|n}$. This similarity can be explained by the fact that the representations $\mathcal{N}^{n,m}_{\mu,\nu,\lambda}(v)$ are actually representations of certain $q$-deformed $W$-algebra, which we call $\mathrm{W}_{\vec{q}}(\dot{\mathfrak{gl}}_{n|m})$. 

Such $W$-algebras appear as follows. There is an easy (but not written in the literature) fact that $\mathcal{N}^{n,m}_{\mu,\nu,\lambda}(v)$ is a subquotient of Fock representation of 
$U_{\vec{q}}(\ddot{\mathfrak{gl}}_1)$. This Fock module is a tensor product of $n+m$ basic Fock modules, see the formula \eqref{eq:Fnm}. On such Fock modules the algebra $U_{\vec{q}}(\ddot{\mathfrak{gl}}_1)$ acts through the its quotient which is called $W$-algebra. This $W$-algebra commutes with the certain intertwining operators, which are called screening operators. The structure of the screening operators in our case suggests the name $\mathrm{W}_{\vec{q}}(\dot{\mathfrak{gl}}_{n|m})$. 

There exists the conformal limit $\vec{q}\rightarrow (1,1,1)$ of screening operators and we denote the limit of the algebra $W$ by $\mathrm{W}(\dot{\mathfrak{gl}}_{n|m})$. For $m=0$ this algebra coincides with the algebra $\mathrm{W}(\dot{\mathfrak{gl}}_{n})$ \cite{Fateev Lukyanov}. The algebras $\mathrm{W}(\dot{\mathfrak{gl}}_{n|1})$ conjecturally coincide with the ``zero momentum'' sector of $W_n^{(2)}$ algebras introduced in \cite{Feigin Semikhatov}. We did not find reference for generic $n,m$.\footnote{\textbf{Note added}: After the first version of the paper appeared in the arXiv these   $W$ algebras (as well as more generic, see Remark \ref{rem:Wmnk}) were studied in the framework of conformal field theory \cite{Litvinov} and supersymmetric gauge theory \cite{Gaiotto}.
} Note that our $W$-algebras differ from ones introduced in \cite{KRW}.

Standard statement in the theory of vertex algebras is an equivalence of the abelian categories of certain representations of vertex algebra and certain representations of quantum group. This is a statement similar to Drinfeld--Kohno or Kazhdan--Lusztig theorems. We conjecture that under this equivalence $\mathrm{W}(\dot{\mathfrak{gl}}_{n|m})$ is related to the product of quantum groups $U_q\mathfrak{gl}_{n|m}\otimes U_{q'}\mathfrak{gl}_n\otimes U_{q''}\mathfrak{gl}_m$ for certain $q,q',q''$. And representations $\mathcal{N}^{n,m}_{\mu,\nu,\lambda}(v)$ under this equivalence go to the tensor products $L^{(n|m)}_\lambda \otimes L^{(n)}_\nu\otimes L^{(m)}_\mu $, where  $L^{(n)}_\nu$ and $L^{(m)}_\mu$ are finite dimensional irreducible representations of $U_{q'}\mathfrak{gl}_n$ and $U_{q''}\mathfrak{gl}_m$, correspondingly, and $L^{(n|m)}_\lambda$ is a tensor representation of $U_q\mathfrak{gl}_{n|m}$ (representations $L^{(n|m)}_\lambda, L^{(n)}_\nu, L^{(m)}_\mu$ also depend on $v$ but we omit this dependence for simplicity).

\par\medskip\par\noindent\textbf{Plan of the paper.}
In Section \ref{sec:MainResults} we give precise statements of our main results for $\chi^{n,m}_{\mu,\nu,\lambda}(q)$ with necessary notation and comments. The remaining sections \ref{sec:Paths}, \ref{sec:Brion}, \ref{sec:alg} are independent of each other. Sections \ref{sec:Paths} and \ref{sec:Brion} are devoted to the combinatorial proofs based on Lindstr\"{o}m--Gessel--Viennot lemma and Brion theorem, correspondingly. Section~\ref{sec:alg} is devoted to the algebraic discussion. First we give a definition of the appropriate $W$-algebras in terms of screening operators. Conjectural materializations of our character formulas are discussed in subsection~\ref{ssec:Fnm}. Relation to quantum group $U_q\mathfrak{gl}_{n|m}\otimes U_{q'}\mathfrak{gl}_n\otimes U_{q''}\mathfrak{gl}_m$ is discussed in subsection~\ref{ssec:glnm}.

\par\medskip\par\noindent\textbf{Acknowledgments.} We thank   A. Babichenko, E.~Gorsky, A.~Kirillov, A.~Litvinov, I.~Makhlin, G.~Mutafyan, G.~Olshanski, Y.~Pugai,  A.~Sergeev for interest to our work and discussions. We are grateful to anonymous referee for the careful reading and huge number of valuable comments.

This work has been funded by the Russian Academic Excellence Project ‘5-100’. M.~B.~acknowledges the financial support of Simons-IUM fellowship, RFBR grant mol\_a\_ved 15-32-20974 and Young Russian Math Contest.

\section{Main Results} \label{sec:MainResults}

\subsection{\!\!} \label{ssec:Notation}
Due to asymptotic conditions \eqref{eq:asymp} $a_{i,j}\geq \nu_i$ and $a_{i,j}\geq \mu_j$. In order to define the grading $|a|\sim \sum a_{i,j}$, we need to subtract these asymptotic values $\nu_i$, $\mu_j$. 

We will use the following definition
\begin{equation}\label{eq:pp:grad}
|a|=\sum_{i-n\leq j-m,\; (i,j) \not\in \lambda} (a_{i,j}-\nu_i)+ \sum_{i-n> j-m,\; (i,j) \not\in \lambda} (a_{i,j}-\mu_j).
\end{equation}
Note that this definition of grading is not invariant under  $m, \mu$ $\leftrightarrow$ $n, \nu$ symmetry. Geometrically the definition \eqref{eq:pp:grad} can be restated as follows. We draw a staircase line from the point $(n,m)$ as on the picture below. This line divides the base of the plane partition $a$ into two parts. We subtract $\nu_i$ from cells in the upper part and $\mu_j$ from cells in the left part, see Fig. \ref{fig:grading}.

\begin{figure}[h]
\begin{tikzpicture}[scale=.75,nodes={font=\small}]
\draw[help lines] (0,-.5) grid (7.5,9);
\fill[black!25!white] (0,4)--(1,4)--(1,6)--(2,6)--(2,8)--(3,8)--(3,9)--(0,9)--cycle;
\fill[black!25!white] (4,-.5)--(4,4)--(7.5,4)--(7.5,-.5)--cycle;
\draw[ultra thick] (0,-.5)--(0,4)--(1,4)--(1,6)--(2,6)--(2,8)--(3,8)--(3,9)--(7.5,9);
\draw[ultra thick] (4,-.5)--(4,4)--(7.5,4);
\draw[very thick,densely dashed] (4,4)--(3,4)--(3,5)--(2,5)--(2,6);
\foreach \y/\minx in {5/3, 4/2, 3/2, 2/2, 1/3} {
   \foreach \x in {\minx,...,6} {
      \node at (\x+.5,9-\y+.5) {$-\nu_\y$};
   }
} 
\foreach \x/\maxy in {1/4, 2/6, 3/5, 4/4} {
   \foreach \y in {\maxy,...,1} {
      \node at (\x-.5,\y-.5) {$-\mu_\x$};
   }
} 
\end{tikzpicture}
\caption{\label{fig:grading}}
\end{figure}

Let $r=\min\{t|\lambda_{n-t}\geq m-t\}$, $0 \leq r \leq \min\{n,m\}$. Geometrically $r$ is the number of horizontal steps in the staircase line starting from the point $(m,n)$. In the Fig. \ref{fig:grading} we have $r=2$. Note that this number $r$ has an interpretation in terms of representation theory of $\mathfrak{gl}(m|n)$, namely, $r$ is called the degree of atypicality (or simply atypicality) of the tensor representation of $\mathfrak{gl}(m|n)$ corresponding to $\lambda$ (see, for example \cite[page 9]{MvdJ}). 

In order to write down the formula for the generating function we parametrize $\lambda$ by some analogue of Frobenius coordinates. We introduce two partitions $\pi, \kappa$ by $\pi_i=\lambda_i-(m-r)$ for $i=1,\ldots,n-r$ and $\kappa_j=\lambda'_j-(n-r)$ for $j=1,\ldots,m-r$, where $\lambda'$ denotes the transpose of the partition $\lambda$.  We denote components of partitions $\nu, \mu, \pi, \kappa$ shifted by $\rho$ by the corresponding capital Latin letters: $N_i=\nu_i+n-i$, $M_j=\mu_j+m-j$, $P_i=\pi_i+(n-r)-i$, $Q_j=\kappa_j+(m-r)-j$.

\subsection{\!\!} In the simplest case $n=1, m=0$ for any asymptotic conditions $\lambda, \nu$ the generating function of partitions $\chi_{\varnothing,\nu,\lambda}^{1,0}(q)$ equals $1/(q)_\infty$ (and similarly for $n=0, m=1$ case). 

Now consider the  $n=m=1$ case. If $\lambda \neq \varnothing$ then the plane partitions decompose into two partitions so the generating function equals $1/(q)_\infty^2$. In the case $\lambda=\varnothing$, there is a clear bijection between plane partitions and $V$-partitions \cite{Stanley} i.e. the $\mathbb{N}$-arrays of integer numbers:
\begin{equation*}
\left(a_0\;\; \begin{matrix}
a_1 & a_2 & a_2 & \ldots\\
b_1 & b_2 & b_2 & \ldots
\end{matrix} \right), 
\end{equation*}
such that $a_0\geq a_1 \geq a_2 \geq \ldots$, $a_0\geq b_1 \geq b_2 \geq \ldots$, $\lim_{i\rightarrow \infty} a_i=\nu_1$, $\lim_{i\rightarrow \infty} b_i=\mu_1$. The weight of the $V$-partition is defined as 
\begin{equation*}
N=\sum_{i \geq 0} (a_i-\nu_1)+\sum_{i \geq 1} (b_i-\mu_1). 
\end{equation*}

\begin{Lemma} \label{lem:V(a,b)}
The generating function of $V$-partitions with asymptotic conditions $\lim_{i\rightarrow \infty} a_i=\nu_1$, $\lim_{i\rightarrow \infty} b_i=\mu_1$ equals
\begin{equation}\label{eq:R:defin}
R(d;q):= \sum_{i=0}^\infty (-1)^{i} \frac{q^{\frac{i(i+1)}{2}}q^{di}}{(q)_\infty^2},
\end{equation}
where $d=\nu_1-\mu_1$.
\end{Lemma}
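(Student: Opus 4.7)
My plan is to parametrize each $V$-partition by its apex $k:=a_0$, decouple the two descending arms, and evaluate the resulting sum with a short $q$-series computation. Since $a_0$ dominates both sequences and they stabilize at $\nu_1$ and $\mu_1$, the apex must satisfy $k\geq\max(\nu_1,\mu_1)$. Conditional on $k$, the tails $(a_i-\nu_1)_{i\geq 1}$ and $(b_j-\mu_1)_{j\geq 1}$ are independent partitions whose largest parts are bounded by $k-\nu_1$ and $k-\mu_1$, respectively. Using the elementary identity $\sum_{\pi:\pi_1\leq N} q^{|\pi|}=1/(q;q)_N$ and observing that the apex contributes $k-\nu_1$ to the weight, the generating function of $V$-partitions equals
\[
\sum_{k\geq \max(\nu_1,\mu_1)} \frac{q^{k-\nu_1}}{(q;q)_{k-\nu_1}\,(q;q)_{k-\mu_1}}.
\]
After reparametrizing by $a=k-\max(\nu_1,\mu_1)$, this becomes $\sum_{a\geq 0} q^a/[(q;q)_a(q;q)_{a+d}]$ when $d\geq 0$, and $q^{-d}$ times the analogous sum with $d$ replaced by $-d$ when $d<0$.

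The heart of the argument is the $q$-series identity
\[
\sum_{a\geq 0}\frac{q^a}{(q;q)_a\,(q;q)_{a+d}}=R(d;q)\qquad (d\geq 0),
\]
which I would prove as follows. Multiply both sides by $(q)_\infty$ and use $(q)_\infty/(q;q)_{a+d}=(q^{a+d+1};q)_\infty$; the left-hand side becomes $\sum_a q^a(q^{a+d+1};q)_\infty/(q;q)_a$. Expand $(q^{a+d+1};q)_\infty$ by the $q$-binomial identity $(x;q)_\infty=\sum_n(-x)^n q^{\binom{n}{2}}/(q;q)_n$ and swap the order of summation. The remaining sum over $a$ equals $\sum_a q^{a(n+1)}/(q;q)_a=1/(q^{n+1};q)_\infty$ by Euler's companion identity, and $1/(q^{n+1};q)_\infty=(q;q)_n/(q)_\infty$ cancels the factor $(q;q)_n$ in the denominator. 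Using $\binom{n}{2}+n=\binom{n+1}{2}$ one is left with $\frac{1}{(q)_\infty}\sum_n(-1)^n q^{\binom{n+1}{2}+dn}$, which equals $R(d;q)(q)_\infty$, proving the identity.

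To cover $d<0$ it remains to verify the symmetry $R(d;q)=q^{-d}R(-d;q)$, which combined with the reduction in the first step closes that case. Clearing $(q)_\infty^{-2}$, the difference $(q)_\infty^2(R(d;q)-q^{-d}R(-d;q))$ reorganizes into the bilateral sum $\sum_{k\in\mathbb{Z}}(-1)^k q^{\binom{k+1}{2}+dk}$; by Jacobi's triple product this equals $(q)_\infty(q^{d+1};q)_\infty(q^{-d};q)_\infty$, and for any integer $d$ the factor $(q^{-d};q)_\infty$ contains $1-q^0=0$, so the sum vanishes. \textbf{The main (modest) obstacle} is the bookkeeping in the middle paragraph: one must track $q$-shifts through the two Euler identities and justify the interchange of summations at the level of formal power series, but once the double application of Euler is spotted the identity collapses in a few lines.
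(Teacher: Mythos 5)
Your argument is correct, and it takes a different route from the paper, which does not reprove the lemma but appeals to Stanley's inclusion--exclusion (sieve) argument for $V$-partitions, observing that it works verbatim for arbitrary $\nu_1,\mu_1$ (an alternative representation-theoretic proof in Feigin--Jimbo--Miwa--Mukhin is also cited). You instead condition on the apex $a_0=k$, which cleanly splits the $V$-partition into two bounded-part partitions and reduces the lemma to the $q$-series identity $\sum_{a\geq 0}q^a/\bigl((q;q)_a(q;q)_{a+d}\bigr)=\sum_{i\geq 0}(-1)^iq^{\binom{i+1}{2}+di}/(q)_\infty^2$, which you then prove by two applications of Euler's identities; the interchange of summations is legitimate at the level of formal power series since the exponent $a+n(a+d+1)+\binom{n}{2}$ grows in both indices. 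What your approach buys is a fully self-contained and purely computational proof where the alternating signs emerge from the expansion of $(q^{a+d+1};q)_\infty$ rather than from a sieve; what it costs is that the combinatorial meaning of the signs (Stanley's inclusion--exclusion over violations of $a_0\geq b_1$) is hidden inside the $q$-binomial theorem. One small correction in your last paragraph: for $d<0$ the factor $(q^{-d};q)_\infty$ does \emph{not} contain $1-q^0$; it is the companion factor $(q^{d+1};q)_\infty$ that vanishes in that range. Since the identity $R(d;q)=q^{-d}R(-d;q)$ is self-dual under $d\mapsto -d$, either observation suffices, so this is a one-word fix rather than a gap.
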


This lemma was proven in the case $\nu_1=\mu_1$ in \cite[Sec. 2.5]{Stanley} by a kind of inclusion-exclusion argument. Actually this proof works for any values of $\mu_1,\nu_1$. See also \cite[Cor. 5.6]{Feigin_Miwa:2011} for another proof.

Now we can write down the first formula for $\chi_{\mu,\nu,\lambda}^{n,m}(q)$.
\begin{Theorem}
\label{th:1} The generating function $\chi_{\mu,\nu,\lambda}^{n,m}(q)$ is equal to the determinant of a  block matrix of the size $(m+n-r) \times (m+n-r)$
\begin{equation}\label{eq:chi:LGV} 
\chi_{\mu,\nu,\lambda}^{n,m}(q)=\frac{(-1)^{mn-r}q^{\Delta^{n,m}_{\mu,\nu,\lambda}}}{(q)_\infty^{m+n}}\det\begin{pmatrix}
\left(\sum_{a\geq 0}(-1)^a q^{\binom{a+1}2}q^{(N_j-M_i)a}\right)_{\substack{1 \leq i \leq m\\ 1 \leq j \leq n}} & 
\left(q^{-M_iQ_{j}}\right)_{\substack{1 \leq i \leq m\\ 1 \leq j \leq m-r}}\\ 
\left(q^{-N_j(P_{i}+1)}\right)_{\substack{1 \leq i \leq n-r\\ 1 \leq j \leq n}}  & 0
\end{pmatrix},
\end{equation}
where $\Delta^{n,m}_{\mu,\nu,\lambda}=\sum_{j=1}^{m-r}{M_jQ_j}+\sum_{i=1}^{n-r} N_i(P_i+1)$.
\end{Theorem}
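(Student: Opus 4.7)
The plan is to convert plane partitions satisfying \eqref{eq:pit} and \eqref{eq:asymp} into families of non-intersecting lattice paths and then invoke the Lindstr\"om--Gessel--Viennot (LGV) lemma.

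The first step is the bijection. Reading the plane partition $a$ along its anti-diagonals produces an interlacing sequence of partitions $\lambda^{(c)}$, $c \in \mathbb{Z}$; via Maya diagrams this is equivalent to a family of non-intersecting lattice paths on $\mathbb{Z}^2$, one path per particle. The asymptotic conditions of \eqref{eq:asymp} then prescribe endpoints of the paths: the $\nu$-asymptotic fixes $n$ endpoints with coordinates $N_j = \nu_j + n - j$, the $\mu$-asymptotic fixes $m$ endpoints with coordinates $M_i = \mu_i + m - i$, while the shape $\lambda$ of infinite entries acts as an ``absorbing region'' which pins $r$ path pairs completely and leaves $n-r$ endpoints on the $\pi$-side at positions $P_i+1$ and $m-r$ endpoints on the $\kappa$-side at positions $Q_j$. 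The pit condition \eqref{eq:pit} fits into this picture naturally since it limits the region through which paths may travel.

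The second step is matching the weight $q^{|a|}$, with $|a|$ given by \eqref{eq:pp:grad}, against the weight of a path configuration. The grading is designed precisely so that each of the $m+n$ paths contributes an independent factor of $1/(q)_\infty$, with the baseline (minimal) weight of an admissible configuration equal to $q^{\Delta^{m,n}_{\mu,\nu,\lambda}}$. The $r$ pinned paths absorbed by $\lambda$ contribute only a fixed monomial and leave a $(m+n-r)\times(m+n-r)$ free enumeration problem. The sign $(-1)^{mn-r}$ originates from the permutation needed to put sources and sinks into the canonical cyclic order that LGV requires.

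The third step is the LGV computation itself. Single-path weights split into four types according to which pair of boundary components the path connects, yielding the block structure of \eqref{eq:chi:LGV}. Paths from a $\mu$-endpoint to a $\nu$-endpoint wind around the pit; their generating function is essentially the $V$-partition enumeration of Lemma~\ref{lem:V(a,b)}, giving the entry $\sum_{a\geq 0}(-1)^a q^{\binom{a+1}{2}} q^{(N_j-M_i)a}$ after extracting the factor $(q)_\infty^{-2}$. Paths from a $\mu$-endpoint to a $\kappa$-endpoint are rigid (geometrically forced) and contribute the single monomial $q^{-M_i Q_j}$; similarly paths from a $\pi$-endpoint to a $\nu$-endpoint contribute $q^{-N_j(P_i+1)}$. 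Finally, paths from a $\pi$-endpoint to a $\kappa$-endpoint cannot be realized within a non-intersecting system, producing the zero block.

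The main obstacle is the bookkeeping rather than any deep new idea. One has to identify carefully the correct parametrization of path endpoints in terms of $(\mu,\nu,\lambda)$ with the appropriate $\rho$-shifts, and verify that the asymmetric grading \eqref{eq:pp:grad} (which treats the $(n,\nu)$ and $(m,\mu)$ sides differently) produces exactly the shift $\Delta^{m,n}_{\mu,\nu,\lambda}$ rather than some other symmetrization. A second delicate point is justifying the clean decoupling of the $r$ pinned paths from the remaining $m+n-r$ free paths, which is where the atypicality parameter $r$ enters in an essential way; this requires checking that no non-intersecting routing of a $\pi$-to-$\kappa$ path exists, so that the LGV determinant truly reduces to the block form displayed in \eqref{eq:chi:LGV}.
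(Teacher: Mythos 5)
Your overall strategy is the one the paper uses: encode the plane partition as a family of non-intersecting lattice paths with prescribed endpoints and apply Lindstr\"om--Gessel--Viennot, with the four blocks of the matrix coming from the four possible source/target types. However, two steps of your outline would not go through as written. First, the bijection you describe (anti-diagonal slices, Maya diagrams, ``one path per particle'') produces an \emph{infinite} family of paths and does not by itself yield an $(m+n-r)\times(m+n-r)$ system; the bijection actually needed is per connected piece of the base: the pit condition confines the support to the union of the first $n$ rows and first $m$ columns, and removing $\lambda$ decomposes this region into exactly $n-r$ infinite rows, $m-r$ infinite columns and $r$ infinite hooks, each carrying one path (on a graph where horizontal edges point right, vertical edges point up for $x<0$ and down for $x>0$, and a horizontal edge at height $b$ has weight $q^b$). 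It is this decomposition, not the particle picture, that makes $r$ appear and fixes the sources at $(-\infty,M_i)$ and $(P_i+\tfrac12,+\infty)$ and the targets at $(+\infty,N_j)$ and $(-Q_j-\tfrac12,+\infty)$.

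Second, your justification of the zero block is wrong in principle. In LGV the matrix entries are \emph{single-path} generating functions $P(s_i\rto t_j)$, so an entry vanishes only if there is no path at all from $s_i$ to $t_j$ --- not because such a path ``cannot be realized within a non-intersecting system.'' (If paths existed but merely could not be made non-crossing, the entry would be nonzero and the determinant would be different.) The correct reason is the edge orientation: a path starting at $(P_i+\tfrac12,+\infty)$ with $P_i\ge 0$ can only move down and to the right, so it can never reach $(-Q_j-\tfrac12,+\infty)$ with $Q_j\ge 0$. Relatedly, the $\mu$-to-$\kappa$ and $\pi$-to-$\nu$ paths are not ``rigid'': each such pair of endpoints admits a Young-diagram's worth of paths, contributing $q^{-M_iQ_j}/(q)_\infty$ and $q^{-N_j(P_i+1)}/(q)_\infty$ respectively, and it is precisely these factors (two copies of $1/(q)_\infty$ from each $R(N_j-M_i;q)$ entry and one from each of the others) that assemble into the prefactor $(q)_\infty^{-(m+n)}$. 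With these corrections your argument matches the paper's proof, including the sign $(-1)^{mn-r}=(-1)^{mn-r^2}$ coming from the inversions of the forced permutation and the overall shift $q^{\Delta^{m,n}_{\mu,\nu,\lambda}}$ coming from renormalizing the path grading to \eqref{eq:pp:grad}.
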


Clearly this formula generalizes previous considerations in the  cases $(n,m)= (1,0)$, $(n,m)=(1,1)$ (or $(n,m)=(0,1)$), where determinant becomes $1\times 1$. 

\begin{Remark}
One can think that the formula \eqref{eq:chi:LGV} is similar to the Jacobi--Trudi formula, which expresses generic Schur polynomial $s_\lambda$ in terms of Schur polynomials corresponding to rows (or columns). Since determinant formula \eqref{eq:chi:LGV} also has terms corresponding to thin hooks it is better to think that this formula is similar to Lascoux--Pragacz formula \cite{LascPrag} for Schur polynomials, which is a common generalization of Jacobi--Trudi and Giambelli formulas. We do not know any interpretation of this analogy.
\end{Remark}

The Theorem \ref{th:1} is proven in Section \ref{sec:Paths}. It is natural that the determinant expression for the generating function can be proven using non-intersecting paths and the Lindstr\"{o}m--Gessel--Viennot lemma. 

Let us mention two more special cases where we have only one block in the matrix. In the case of $m=0$, we have $r=0$, $\pi=\lambda$ and after a multiplication on $(q)_\infty^n$ the determinant becomes equal to $a_{\nu+\rho}(q^{-\lambda-\rho})$. So we get the known formula \eqref{eq:Wn:char}.

In the case $m=n$ and $\lambda=\varnothing$, the formula 
\eqref{eq:chi:LGV} simplifies to $\det\Bigl(R(N_j-M_i;q)\Bigr)$. This formula was proven in \cite{Feigin Mutafyan2} (following \cite{Feigin_Miwa:2011}) under the additional assumption that $\mu=\varnothing$.


\subsection{\!\!} The determinant in formula \eqref{eq:chi:LGV} can be calculated.

\begin{Theorem}\label{th:2} The generating function $\chi_{\mu,\nu,\lambda}^{n,m}(q)$ is equal to the sum over $r$-tuples of integer numbers $A_1>A_2>\ldots>A_r\geq 0$
\begin{equation}\label{eq:chi:bos}
\chi_{\mu,\nu,\lambda}^{n,m}(q)=(-1)^{r(m+n)}q^{\Delta^{n,m}_{\mu,\nu,\lambda}} \!\!\!\!\!\!\!\!\sum_{A_1>A_2>\ldots>A_r\geq 0} \!\!\!\!\!\!\!\!\! (-1)^{\sum\limits_{i=1}^r A_i}q^{\sum\limits_{i=1}^r \binom{A_i+1}2} \frac{a_{N}(q^A,q^{-P-1})a_{M}(q^{-A},q^{-Q})}{(q)_\infty^{m+n}}, 
\end{equation}
where $a_N, a_M$ were defined in formula~\eqref{eq:a_lambda} and $\Delta^{n,m}_{\mu,\nu,\lambda}=\sum_{j=1}^{m-r}{M_jQ_j}+\sum_{i=1}^{n-r} N_i(P_i+1)$.

\end{Theorem}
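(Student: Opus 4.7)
The plan is to derive Theorem~\ref{th:2} algebraically from Theorem~\ref{th:1} by evaluating the block determinant in~\eqref{eq:chi:LGV}. Since the bottom-right block is zero, I would apply Laplace expansion along the bottom $n-r$ rows: summing over $(n-r)$-subsets $S=\{s_1<\dots<s_{n-r}\}\subset\{1,\dots,n\}$ labelling the columns used by the bottom, with complementary $r$-subset $T=\{t_1<\dots<t_r\}\subset\{1,\dots,n\}$ labelling the first-block columns retained by the top. Writing $z_a:=q^{-(P_a+1)}$, the bottom sub-determinant becomes the generalized Vandermonde $\det(z_a^{N_{s_b}})$, which is to be recognized as the ``$q^{-P-1}$'' block of the arguments of $a_N$ in~\eqref{eq:chi:bos}.

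For the $m\times m$ top sub-determinant, I would expand each of the $r$ retained first-block entries using its defining series $\sum_{A_c\geq 0}(-1)^{A_c}q^{\binom{A_c+1}{2}}q^{(N_{t_c}-M_i)A_c}$ and apply column-multilinearity. This produces, for each $A=(A_1,\dots,A_r)\in\mathbb{Z}_{\geq 0}^r$, the scalar $\prod_c(-1)^{A_c}q^{\binom{A_c+1}{2}}q^{N_{t_c}A_c}$ times the $m\times m$ determinant with columns $(q^{-M_iA_c})_i$ and $(q^{-M_iQ_j})_i$; after transposition the latter is exactly $a_M(q^{-A_1},\dots,q^{-A_r},q^{-Q_1},\dots,q^{-Q_{m-r}})$. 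Interchanging the sums over $T$ and $A$, the problem reduces to analysing the $T$-sum $\mathrm{Br}(A):=\sum_T\epsilon(T)\,q^{\sum_c N_{t_c}A_c}\det(z_a^{N_{s_b}})$. Since $a_M(q^{-A},q^{-Q})$ is antisymmetric in $(A_1,\dots,A_r)$ and $\prod_c(-1)^{A_c}q^{\binom{A_c+1}{2}}$ is symmetric in $A$, only $\mathrm{Br}^{\mathrm{anti}}(A)$ contributes to the $A$-sum. A second (reverse) Laplace expansion applied to $a_N$ at the arguments $(q^{A_1},\dots,q^{A_r},q^{-(P_1+1)},\dots,q^{-(P_{n-r}+1)})$ identifies $\sum_{\tau\in S_r}\mathrm{sgn}(\tau)\,\mathrm{Br}(A_\tau)$ with $\gamma\cdot a_N(q^A,q^{-P-1})$ for a constant $\gamma\in\{\pm 1\}$ independent of $A$, yielding $\mathrm{Br}^{\mathrm{anti}}(A)=\frac{\gamma}{r!}\,a_N(q^A,q^{-P-1})$.

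The full integrand $a_N(q^A,q^{-P-1})\cdot\prod_c(-1)^{A_c}q^{\binom{A_c+1}{2}}\cdot a_M(q^{-A},q^{-Q})$ is then symmetric in $A$ (the two antisymmetries cancel) and vanishes whenever any $A_i=A_j$ (repeated column in $a_N$ or $a_M$), so summing over $A\in\mathbb{Z}_{\geq 0}^r$ and dividing by $r!$ is equivalent to summing over the strict chain $A_1>\dots>A_r\geq 0$; this produces the right-hand side of~\eqref{eq:chi:bos} up to a global sign. The main obstacle is sign bookkeeping: one must verify $(-1)^{mn-r}\gamma=(-1)^{r(m+n)}$. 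An explicit parity check of the two Laplace signs, using $\sum_{s\in S}s+\sum_{t\in T}t=n(n+1)/2$, gives $\gamma=(-1)^{nm-rm+rn+r}$, and combining with the prefactor $(-1)^{mn-r}$ of~\eqref{eq:chi:LGV} produces $(-1)^{rn-rm}\equiv(-1)^{r(m+n)}\pmod 2$, as required. No new combinatorial input is needed beyond Theorem~\ref{th:1}; the argument is routine multilinear algebra that must be carried out carefully.
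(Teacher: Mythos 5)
Your proposal is correct, and it follows the same overall strategy as the paper: Theorem~\ref{th:2} is obtained from Theorem~\ref{th:1} by a direct evaluation of the block determinant (this is exactly the content of Lemma~\ref{lem:LGV=bos}, proved in Subsection~\ref{ssec:LGV:lem}). The difference is in how the determinant is evaluated. The paper first multiplies the $Q$-columns by $(-1)^{Q_j}q^{\binom{Q_j+1}{2}}$, then factors the resulting matrix $\widetilde{\mathrm{M}}$ as a product of an $(m+n-r)\times\infty$ and an $\infty\times(m+n-r)$ matrix and applies the Cauchy--Binet formula; the delta-function columns force $-P_i-1$ and $Q_j$ into the index set, and the remaining $r$ indices are automatically the strictly decreasing $A_1>\dots>A_r\geq 0$, with all signs absorbed into the minors. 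Your route --- Laplace expansion along the bottom $n-r$ rows, multilinear expansion of the $r$ retained series columns, and antisymmetrization in $A$ to reassemble $a_N(q^A,q^{-P-1})$, followed by the $\tfrac{1}{r!}$ reduction from $\mathbb{Z}_{\geq 0}^r$ to strict chains --- is an unpacked version of the same computation. Cauchy--Binet buys compactness and makes the restriction to strictly ordered $A$ and the sign bookkeeping automatic; your version is more elementary but requires the symmetrization step and the explicit parity check of the two Laplace signs, both of which you carry out correctly ($\gamma=(-1)^{(n-r)(m-r)}$, and $(-1)^{mn-r}\gamma=(-1)^{r(m+n)}$ since $r^2-r$ is even). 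The only point left implicit is that the interchange of the finite sum over $T$ with the infinite sum over $A$ is legitimate, which holds because the exponent $\binom{A_c+1}{2}+(N_{t_c}-M_i)A_c$ grows quadratically, so each power of $q$ receives only finitely many contributions; this is routine and does not affect the validity of the argument.
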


\begin{Lemma}\label{lem:LGV=bos}
	The r.h.s. of \eqref{eq:chi:LGV} and \eqref{eq:chi:bos} are equal.
\end{Lemma}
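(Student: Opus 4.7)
The plan is to perform a ``double Laplace expansion'': first on the block matrix of \eqref{eq:chi:LGV}, and then, in reverse, on the antisymmetric polynomial $a_N$ appearing in \eqref{eq:chi:bos}. Setting $x_i=q^{-M_i}$ and $y_j=q^{N_j}$, the three nonzero blocks of the matrix have entries $A_{ij}=\sum_{a\ge 0}(-1)^a q^{\binom{a+1}{2}}(x_iy_j)^a$, $B_{ij}=x_i^{Q_j}$, and $C_{ij}=y_j^{-(P_i+1)}$. Since the bottom-right block vanishes, the generalized Laplace expansion along the last $n-r$ rows forces those rows to be paired with $n-r$ of the first $n$ columns; writing $\bar S$ for this set and $S=\{s_1<\cdots<s_r\}$ for its complement in $\{1,\ldots,n\}$, one obtains
\[
\det M=\sum_{|S|=r}\epsilon(S)\,\det(A|_S,B)\,\det(C|_{\bar S}),
\]
with a Laplace sign $\epsilon(S)$ that depends on $S$ only through $\sum_{s\in S}s$.

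Next I will expand each $A_{ij}$ as a power series and use multilinearity in the $r$ columns of $A|_S$ to write
\[
\det(A|_S,B)=\sum_{\vec a\in\mathbb{Z}_{\ge 0}^r}\;\prod_{k=1}^r(-1)^{a_k}q^{\binom{a_k+1}{2}}y_{s_k}^{a_k}\;\det\!\bigl[x_i^{b_k}\bigr]_{i,k=1}^m,
\]
with $(b_1,\ldots,b_m)=(a_1,\ldots,a_r,Q_1,\ldots,Q_{m-r})$. Transposing identifies the $m\times m$ determinant with $a_M(q^{-a_1},\ldots,q^{-a_r},q^{-Q_1},\ldots,q^{-Q_{m-r}})$, so the coefficient $F(\vec a):=(-1)^{\sum a_k}q^{\sum\binom{a_k+1}{2}}a_M(q^{-\vec a},q^{-Q})$ is antisymmetric in $\vec a$. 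Summands with $a_i=a_j$ therefore cancel, and collecting $S_r$-orbits reduces the unrestricted sum to one over strict tuples $A_1>\cdots>A_r\ge 0$; the residual antisymmetrization $\sum_\tau\mathrm{sgn}(\tau)\prod_k y_{s_k}^{A_{\tau(k)}}$ equals the Vandermonde-like determinant $D(S,\vec A):=\det[y_{s_k}^{A_l}]_{k,l=1}^r$. Interchanging the sums over $S$ and $\vec A$ then yields
\[
\det M=\sum_{A_1>\cdots>A_r\ge 0}F(\vec A)\;\sum_{|S|=r}\epsilon(S)\,D(S,\vec A)\,\det(C|_{\bar S}).
\]

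The key observation is that the inner sum is, up to an $S$-independent sign, exactly the Laplace expansion along the top $r$ rows of the $n\times n$ determinant $a_N(q^{A_1},\ldots,q^{A_r},q^{-P_1-1},\ldots,q^{-P_{n-r}-1})$: the top $r\times r$ minor in columns $S$ is $D(S,\vec A)$, and the complementary $(n-r)\times(n-r)$ minor in columns $\bar S$ is $\det(C|_{\bar S})$. This produces $\det M=\eta\cdot\sum_{\vec A}F(\vec A)\,a_N(q^A,q^{-P-1})$ for an explicit global sign $\eta=\eta(m,n,r)$, and restoring the prefactor $(-1)^{mn-r}q^\Delta/(q)_\infty^{m+n}$ from \eqref{eq:chi:LGV} matches the right-hand side of \eqref{eq:chi:bos} precisely when $(-1)^{mn-r}\eta=(-1)^{r(m+n)}$. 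The main obstacle is precisely this sign bookkeeping: the two Laplace expansions each contribute an $S$-dependent sign of the form $(-1)^{\sum_{s\in S}s}$ which cancels, leaving a single parity identity in $m,n,r$ that must be checked directly. Carrying out the Laplace bookkeeping gives the exponent $mn-r+\tfrac{m(m+1)}{2}+\tfrac{(m-r)(2n+m-r+1)}{2}-\tfrac{r(r+1)}{2}$, which simplifies to $r(m+n)$ modulo~$2$ after a short computation using that $m(m+1)$ is even.
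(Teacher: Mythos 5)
Your argument is correct, and it reaches the same expansion as the paper's proof (Subsection~\ref{ssec:LGV:lem}) by a different mechanism. The paper rescales the $B$-columns by $(-1)^{Q_j}q^{\binom{Q_j+1}{2}}$, factors the resulting matrix as a product of two infinite rectangular matrices, and applies the Cauchy--Binet formula, so that the sum over $A_1>\cdots>A_r\ge 0$ appears in one stroke as the sum over common index subsets $\{-P_i-1\}\cup\{A_k\}\cup\{Q_j\}$. Your double Laplace expansion unpacks exactly this: the block Laplace along the zero block selects which $n-r$ of the $N$-columns pair with the $C$-block, the series expansion plus the antisymmetry of $F(\vec a)$ recovers the remaining $r$ strictly decreasing indices, and the reverse Laplace reassembles $a_N(q^A,q^{-P-1})$. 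What your version buys is that it never manipulates infinite matrices, at the cost of heavier sign bookkeeping; the paper's version buys brevity. Your final parity claim is right: writing $k=m-r$ one has $\tfrac{k(k+1)}{2}+kr=\tfrac{m(m+1)}{2}-\tfrac{r(r+1)}{2}$, so your exponent reduces to $mn-r+(m-r)(n-r)\equiv r(m+n)\pmod 2$, which is exactly the sign needed to match \eqref{eq:chi:bos} against the prefactor $(-1)^{mn-r}$ of \eqref{eq:chi:LGV}. The only step worth stating explicitly is that the term-by-term rearrangement of the infinite sums over $\vec a$ is legitimate because $\binom{a+1}{2}+(N_j-M_i)a\to\infty$, so each power of $q$ receives only finitely many contributions; the paper's infinite Cauchy--Binet requires the same remark.
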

Our proof of this lemma is based on a direct calculation, which we present in Subsection~\ref{ssec:LGV:lem}.

There are two special cases in which the right side takes a simpler form. These two special cases of the theorem were known. 

First, if $m=0$ then the formula \eqref{eq:chi:bos} reduces to \eqref{eq:Wn:char}. More generally if $r=0$, then base of the plane partition decomposes into two connected components and the formula \eqref{eq:chi:bos} becomes a product $q^{\Delta^{n,m}_{\mu,\nu,\lambda}}{a_{N}(q^{-P-1})a_{M}(q^{-Q})}/{(q)_\infty^{m+n}}$.

In the second case we take $\lambda=\mu=\nu=\varnothing$. Then the functions $a_N$ and $a_M$ reduce to Vandermonde products and we can write (we assume that $n \geq m$)
\begin{multline}
\chi_{\varnothing,\varnothing,\varnothing}^{n,m}(q) \\ =\frac{q^{\Delta^{n,m}_{\varnothing,\varnothing,\varnothing}}}{(q)_\infty^{m+n}}\!\!\!\!\sum_{A_1>A_2>\ldots>A_m\geq 0} \!\!\!\!\!\! (-1)^{\sum\limits_{i=1}^m (A_i-i+1)}q^{\sum\limits_{i=1}^m \binom{A_i+1}2}   {V(q^{m-n},\ldots,q^{-1},q^{A_m},\ldots,q^{A_1})V(q^{-A_1},\ldots,q^{-A_m})} \\=
\frac1{{(q)_\infty^{m+n}}}\sum_{\alpha_1\geq \alpha_2\geq \ldots\geq\alpha_m\geq 0} \!\!\!\!\!\! (-1)^{\sum\limits_{i=1}^m \alpha_i}q^{\sum\limits_{i=1}^m \frac12\alpha_i(\alpha_i+(2i-1))}   \!\!\!\prod_{1\leq i < j \leq m}(1-q^{\alpha_i-\alpha_j-i+j})\!\!\!\prod_{1\leq i < j \leq n}(1-q^{\alpha_i-\alpha_j-i+j}).
\end{multline}
Here we set $\alpha_j=0$ for $j>m$.  This formula coincides with the  \cite[Conjecture 5.10]{Feigin_Miwa:2011} proved in \cite[Theorem 1.2]{Feigin Mutafyan1} by a completely different method.

\subsection{\!\!} The sum in formula \eqref{eq:chi:bos} contains zero terms if one of $A_i$ equals to one of $Q_j$. We want to exclude such terms. The following lemma is standard.
\begin{Lemma} \label{lem:Z=l+l}
For any partition $\lambda$ we have
\[\mathbb{Z}=\{\lambda_j'-j-(n-m)|j\in \mathbb{N}\}\bigsqcup\{i-\lambda_i-(n-m)-1| i \in \mathbb{N}\}.\]
\end{Lemma}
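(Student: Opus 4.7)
The plan is to reduce the statement to a classical, shift-invariant partition identity. Since subtracting $-(n-m)$ from every element of each set on the right-hand side preserves both the union and the disjointness, Lemma~\ref{lem:Z=l+l} is equivalent to
\begin{equation*}
\mathbb{Z} \;=\; \{\lambda'_j - j \,:\, j \in \mathbb{N}\} \;\sqcup\; \{i - \lambda_i - 1 \,:\, i \in \mathbb{N}\},
\end{equation*}
which is the standard Maya-diagram identity associated to a partition. So the $n$ and $m$ in the lemma are cosmetic and can be dropped at the outset.

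I would then note that each of the two sets already consists of distinct integers: since $\lambda$ is a partition, the sequence $i-\lambda_i$ is strictly increasing in $i$ (because $\lambda_i\geq \lambda_{i+1}$ implies $(i+1)-\lambda_{i+1}\geq (i-\lambda_i)+1$), and, symmetrically, $\lambda'_j - j$ is strictly decreasing in $j$. So only the disjointness and the exhaustion of $\mathbb{Z}$ remain, and I would prove these jointly by induction on $|\lambda|$. The base case $\lambda=\varnothing$ is immediate: $\{-j:j\geq 1\}\sqcup\{i-1:i\geq 1\}=\mathbb{Z}_{<0}\sqcup\mathbb{Z}_{\geq 0}=\mathbb{Z}$.

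For the inductive step, let $\tilde\lambda$ be obtained from $\lambda$ by adjoining an addable box at position $(i_0,j_0)$, so that $\lambda_{i_0}=j_0-1$ and $\lambda'_{j_0}=i_0-1$. Then the contribution from row $i_0$ to the row-indexed set changes from $i_0-\lambda_{i_0}-1=i_0-j_0$ to $i_0-j_0-1$, while the contribution from column $j_0$ to the column-indexed set changes from $\lambda'_{j_0}-j_0=i_0-j_0-1$ to $i_0-j_0$. All other contributions are unchanged. In other words, the integers $i_0-j_0$ and $i_0-j_0-1$ simply swap which of the two sets they belong to, so the disjoint-union structure is preserved by adding a box, and the inductive step goes through.

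There is no real obstacle here; the only subtlety is matching indexing conventions (particularly the $-1$ shift between the row-set and the column-set), which is precisely what forces the two ``swapped'' values to be the consecutive integers $i_0-j_0$ and $i_0-j_0-1$ rather than the same integer twice. As a sanity check and alternative viewpoint, the two sets can be identified with the half-integer positions of horizontal, respectively vertical, steps along the infinite boundary path of the Young diagram of $\lambda$ extended by an eastward horizontal ray and a northward vertical ray; this path visibly partitions all positions into two complementary labels and gives a one-line proof.
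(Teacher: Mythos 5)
Your proof is correct, but your primary argument differs from the paper's. The paper proves the lemma via the Maya-diagram picture: rotate the Young diagram by $135^\circ$ and project onto the axis, so that the positions $\lambda'_j-j+\frac12$ (particles) and $i-\lambda_i-\frac12$ (holes) visibly tile $\mathbb{Z}+\frac12$; a shift by $-(n-m)-\frac12$ then gives the statement. This is exactly the ``boundary path'' viewpoint you relegate to a closing remark. Your main route instead strips off the cosmetic shift $-(n-m)$ and runs an induction on $|\lambda|$, checking that adjoining an addable box at $(i_0,j_0)$ merely swaps the consecutive integers $i_0-j_0$ and $i_0-j_0-1$ between the row-indexed and column-indexed sets; the computation $\lambda_{i_0}=j_0-1$, $\lambda'_{j_0}=i_0-1$ is right, and the partition property is manifestly preserved under such a swap, so the induction is sound. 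The paper's argument is shorter and more visual once the Maya correspondence is set up; yours is more elementary and self-contained (no picture needed) at the cost of an induction, and it makes explicit why the $-1$ offset between the two sets is exactly what prevents a collision when a box is added. Either is a complete proof.
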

\begin{proof}[Sketch of the proof]
The proof is based on the following construction. Rotate Young diagram corresponding to $\lambda$ by $135^\circ$ and take a projection on $OX$. On the Fig \ref{fig:whiteblack} we give an example for $\lambda=(4,4,4,3,3,1)$.
\begin{figure}
\begin{tikzpicture}[scale=.5,baseline={([yshift=-.5ex]current bounding box.center)}]
\draw[thick,->] (-7.5,0)--(9.5,0) node[right]{};
\foreach \x/\y in {0/1,-1/2,1/2,-2/3,0/3,2/3,-3/4,-1/4,1/4,3/4,
                  -2/5,0/5,2/5,4/5,-1/6,1/6,3/6,2/7} {
     \draw[very thin] (\x,\y-1)--(\x-1,\y)--(\x,\y+1)--(\x+1,\y)--cycle;
}
\draw (-6,6)--(0,0)--(8,8);
\foreach \x/\y in {
                    -3.5/4.5,-2.5/5.5,-1.5/6.5,0.5/6.5,1.5/7.5,4.5/6.5,
                    6.5/6.5,7.5/7.5,8.5/8.5 
                  } {
     \draw[thick] (\x-.5,\y-.5)--(\x+.5,\y+.5);
     \draw[thick,fill=white] (\x,\y) circle (1mm);
     \draw[dotted](\x-.5,\y-.5)--(\x-.5,0) (\x+.5,\y+.5)--(\x+.5,0);
     \draw[thick,fill=white] (\x,0) circle (1mm);
}
\foreach \x/\y in {
                    -0.5/6.5,2.5/7.5,3.5/6.5,5.5/6.5,
                    -4.5/4.5,-5.5/5.5,-6.5/6.5 
                  } {
     \draw[thick] (\x-.5,\y+.5)--(\x+.5,\y-.5);
     \draw[thick,fill=black] (\x,\y) circle (1mm);
     \draw[dotted](\x-.5,\y+.5)--(\x-.5,0) (\x+.5,\y-.5)--(\x+.5,0);
     \draw[thick,fill=black] (\x,0) circle (1mm);
}
\foreach \i in {-7,...,9} {
     \draw[] (\i,.2)--(\i,-.2);
}
\foreach \i in {-1,...,1} {
     \draw[] (\i,.2)--(\i,-.2) node[below] {\small \i};
}
\end{tikzpicture}
\caption{\label{fig:whiteblack}}
\end{figure}

It is easy to see that $x$-coordinates of white balls are $i-\lambda_i-\frac12$ and coordinates of black ones are $\lambda'_j-j+\frac12$. Therefore $\mathbb{Z}+\frac12=\{\lambda_j'-j+\frac12\}\bigsqcup\{i-\lambda_i-\frac12\}$. Shifting by $-(n-m)-\frac12$ we get the Lemma.
\end{proof}

Recall that 
\[Q_j=\kappa_j-j+m-r=\lambda_j'-j-(n-m)\quad \text{for $1 \leq j \leq m-r$}.\] Note that $\lambda_j'-j-(n-m)\geq 0$ if and only if $1 \leq j \leq m-r$. Since $A_1,\ldots,A_r \geq 0$ and $A_i \neq Q_j$ then it follows from the Lemma \ref{lem:Z=l+l} that $\{A_i\}$ should be a subset in the set $\{i-\lambda_i-(n-m)-1| i \in \mathbb{N}\}$. Note that $i-\lambda_i-(n-m)-1 \geq 0$ if and only if  $i>n-r$. Therefore, the non-zero terms correspond to subsets $\{A_r,\ldots,A_1\} \subset \{-L_i|i>n-r\}$, where $L_i=\lambda_i-i+n-m+1$. Rewriting the determinants in \eqref{eq:chi:bos} as sums over permutations we get the following result.

\begin{Theorem}\label{th:2'} 
The generating function $\chi_{\mu,\nu,\lambda}^{n,m}(q)$ is equal to the sum
\begin{equation}
\begin{aligned}
\chi_{\mu,\nu,\lambda}^{n,m}(q)=(-1)^{r(m+n)}\!\!\!\!\! \sum_{(\sigma,\tau,A)\in \Theta}  
\!\!\!\!\!(-1)^{|\sigma|+|\tau|+\sum\limits_{i=1}^r A_i}\,\frac{q^{\Delta^{\sigma,\tau,A}(\mu,\nu,\lambda)}}{(q)_\infty^{n+m}}
\end{aligned}, \label{eq:chi:bos'}
\end{equation}
where 
\[(\sigma,\tau,A)\in \Theta \Leftrightarrow \sigma \in S_n, \tau \in S_m, A_{r-i+1}=-L_{s_i}, \text{ for } s_r>\cdots>s_1>n-r, \]
and 
\begin{multline*}
\Delta^{\sigma,\tau,A}(\mu,\nu,\lambda)=\sum\limits_{i=1}^r A_i \left(\frac{A_i+1}2+N_{\sigma(i)}-M_{\tau(i)}\right) 
\\ -\sum\limits_{i=r+1}^n (P_{i-r}+1)(N_{\sigma(i)}-N_{i-r})- \sum\limits_{i=r+1}^m Q_{i-r}(M_{\tau(i)}-M_{i-r}).
\end{multline*}
\end{Theorem}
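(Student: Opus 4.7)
The plan is to derive Theorem~\ref{th:2'} directly from Theorem~\ref{th:2} by expanding each of the two antisymmetric polynomials in \eqref{eq:chi:bos} as a sum over permutations. Writing
$$a_N(q^A,q^{-P-1}) \;=\; \sum_{\sigma \in S_n} (-1)^{|\sigma|} \prod_{i=1}^r q^{A_i N_{\sigma(i)}} \prod_{i=r+1}^n q^{-(P_{i-r}+1)N_{\sigma(i)}}$$
and similarly for $a_M(q^{-A},q^{-Q})$, substitution into \eqref{eq:chi:bos} produces a triple sum indexed by $A$, $\sigma$, $\tau$. What remains is (i) to restrict the $A$-sum to the index set $\Theta$ by identifying vanishing terms, and (ii) to match the resulting exponent of $q$ with $\Delta^{\sigma,\tau,A}(\mu,\nu,\lambda)$.

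For step (i), the key observation is that $a_M(q^{-A},q^{-Q})$ vanishes whenever two of its arguments coincide, in particular whenever $A_i=Q_j$ for some $i,j$. Using Lemma~\ref{lem:Z=l+l} together with the definitions of $r$, $\pi$, $\kappa$, one verifies directly that $\mathbb{Z}_{\geq 0}=\{Q_j : 1\le j\le m-r\}\sqcup\{-L_k : k>n-r\}$: on the one hand $Q_{m-r}\geq 0$ because $\lambda'_{m-r}\geq n-r$ by the minimality of $r$, on the other hand $Q_{m-r+1}<0$ because $\lambda_{n-r+1}\leq m-r$ for the same reason. Hence the surviving $A_i$ lie in $\{-L_k : k>n-r\}$. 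Since $L_i$ is strictly decreasing in $i$, the ordered tuple $A_1>\cdots>A_r$ corresponds bijectively to a choice of indices $n-r<s_1<\cdots<s_r$ via $A_{r-i+1}=-L_{s_i}$, exactly matching the definition of $\Theta$. The identity $-P_j-1=-L_j$ for $j\leq n-r$ also shows that the potential additional vanishing of $a_N$ at $A_i=-P_j-1$ is automatically avoided, so no further restriction on $A$ is needed.

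For step (ii), collecting the $q$-exponents attached to the $A$-indexed factors produces $\sum_{i=1}^r \bigl[\binom{A_i+1}{2}+A_i(N_{\sigma(i)}-M_{\tau(i)})\bigr]$, which matches the first sum in $\Delta^{\sigma,\tau,A}$. The remaining contributions from the determinantal expansion are $-\sum_{i=r+1}^n (P_{i-r}+1)N_{\sigma(i)}-\sum_{i=r+1}^m Q_{i-r}M_{\tau(i)}$; adding the overall prefactor $\Delta^{m,n}_{\mu,\nu,\lambda}=\sum_{i=1}^{n-r}(P_i+1)N_i+\sum_{j=1}^{m-r}Q_j M_j$ and reindexing $i-r\mapsto j$ assembles exactly the telescoping differences $-(P_{i-r}+1)(N_{\sigma(i)}-N_{i-r})$ and $-Q_{i-r}(M_{\tau(i)}-M_{i-r})$ that complete $\Delta^{\sigma,\tau,A}$. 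The signs $(-1)^{r(m+n)}$ and $(-1)^{\sum A_i}$ are carried over from \eqref{eq:chi:bos}, while the determinant expansions contribute the remaining $(-1)^{|\sigma|+|\tau|}$.

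The main obstacle is purely bookkeeping: carefully aligning the index conventions for $P$, $Q$, $L$ and tracking the signs through the expansion. The only conceptual input beyond Theorem~\ref{th:2} is the vanishing argument based on Lemma~\ref{lem:Z=l+l}, which is essentially provided before the theorem statement; no new combinatorial ingredients are needed.
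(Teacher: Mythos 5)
Your argument is correct: the permutation expansion of $a_N$ and $a_M$, the restriction of the $A$-tuple via Lemma~\ref{lem:Z=l+l} (your verification that $\mathbb{Z}_{\ge 0}=\{Q_j:1\le j\le m-r\}\sqcup\{-L_k:k>n-r\}$ is right, as is the remark that $-P_j-1=-L_j$ rules out spurious coincidences inside $a_N$), and the matching of the $q$-exponent with $\Delta^{\sigma,\tau,A}(\mu,\nu,\lambda)$ after absorbing the prefactor $\Delta^{m,n}_{\mu,\nu,\lambda}$ and the signs all check out. However, what you have written out is exactly the algebraic reduction that the authors sketch in the few lines preceding the statement of Theorem~\ref{th:2'}; it is not the paper's designated proof of that theorem. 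The proof the paper actually gives, in Section~\ref{sec:Brion}, is an independent combinatorial argument: the plane partitions subject to \eqref{eq:pit}--\eqref{eq:asymp} are viewed as the integer points of an infinite-dimensional polyhedron, which is truncated and then treated via Brion's theorem; each summand of \eqref{eq:chi:bos'} is recovered as the contribution of an (acyclic) vertex, with $(\sigma,\tau,A)$ encoding how the skew diagram $(H^n,m^{H'-n})-\lambda$ is cut into $m+n$ ribbons. Your route is logically sound but makes Theorem~\ref{th:2'} dependent on Theorem~\ref{th:2}, hence on Theorem~\ref{th:1} through Lemma~\ref{lem:LGV=bos}, hence on the Lindstr\"{o}m--Gessel--Viennot argument of Section~\ref{sec:Paths}. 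The Brion proof instead establishes \eqref{eq:chi:bos'} independently — which is precisely why the paper advertises \emph{two} separate combinatorial proofs of the main result — and it additionally supplies a geometric meaning for the index set $\Theta$ that the purely algebraic permutation expansion does not.
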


This theorem will be proven in Section \ref{sec:Brion}.
In this proof we consider inequalities $a_{i,j}\geq a_{i+1,j}$, $a_{i,j}\geq a_{i,j+1}$ and conditions \eqref{eq:asymp} as a definition  of a polyhedron (infinite dimensional) and the generating function $\chi_{\mu,\nu,\lambda}^{n,m}(q)$ as a sum over integer points in the polyhedron. This sum is calculated using Brion theorem~\cite{Brion}. Each term in \eqref{eq:chi:bos'} corresponds to a vertex contribution in Brion theorem.

%

\section{Lattice paths} \label{sec:Paths}

\subsection{\!\!} Let $G$ be an oriented graph with the set of vertices  $V$ and the set of edges $E$. To any edge $e \in E$ we assign a weight $w(e)$. For any path $p=(e_1,e_2,\ldots,e_n)$ we define the weight as a product of the edge weights $w(p)=\prod w(e_i)$. 

For any two vertices $s,t$ we denote $P(s\rto t)=\sum_p w(p)$, where the summation goes over all paths from $s$ to $t$. Below we will assume that $P(s\rto t)$ is well-defined. Usually this follows from the condition that number of paths from $s$ to $t$ is finite (for example in \cite{Aigner} this follows from the conditions that $G$ is finite and has no oriented cycles). In our case the weight of the edge $w(e)\in \{q^{\mathbb{Z}_{\geq 0}}\}$, where $q$ is a formal variable, and we assume that for any fixed $H\in \mathbb{Z}_{\geq 0}$ number of paths from $s$ to $t$ of the weight $q^H$ is finite. Then $P(s\rto t)$ is well-defined as a formal series in $q$, $P(s\rto t)\in \mathbb{C}[[q]]$. 

For any sets of $n$ source vertices $S=\{s_1,\ldots,s_n\}$ and $n$ target vertices $T=\{t_1,\ldots,t_n\}$ we denote $P(S\rto T)=\sum_{p_1,\ldots,p_n} w(p_1)\cdot\ldots\cdot w(p_n)$, where the summation goes over all sets of paths such that $p_i$ goes from $s_i$ to $t_i$. Clearly $P(S\rto T)=P(s_1 \rto t_1)\cdot\ldots\cdot P(s_n\rto t_n)$.

By $P_{nc}(S\rto T)$ we denote the sum  $\sum_{p_1,\ldots,p_n} w(p_1)\cdot\ldots\cdot w(p_n)$ where set of paths is assumed to be without crossings. The Lindstr\"{o}m-Gessel-Viennot lemma provides an efficient way to find $P_{nc}(S\rto T)$. Standard references for this lemma are \cite{Lind}, \cite{Gessel Viennot}, for the clear introduction see \cite{Aigner}.

\begin{Lemma*}[Lindstr\"{o}m-Gessel-Viennot] 
For an oriented graph $G$ as above and any sets of sources and targets $S=\{s_1,\ldots,s_n\}, T=\{t_1,\ldots,t_n\}$ we have
\[\sum_{\sigma \in S_n}(-1)^{|\sigma|}P_{nc}(S\rto \sigma(T))=\det\begin{pmatrix}
P(s_i \rto t_j)
\end{pmatrix}_{i,j=1}^n  \]
\end{Lemma*}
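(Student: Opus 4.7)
I will sketch the classical sign-reversing involution proof. The starting point is to expand the determinant:
\begin{equation*}
\det\bigl(P(s_i \rto t_j)\bigr)_{i,j=1}^{n}
= \sum_{\sigma \in S_n}(-1)^{|\sigma|}\prod_{i=1}^{n} P(s_i \rto t_{\sigma(i)})
= \sum_{(\sigma,\mathbf{p})}(-1)^{|\sigma|}\,w(p_1)\cdots w(p_n),
\end{equation*}
where the final sum ranges over pairs $(\sigma,\mathbf{p})$ with $\sigma\in S_n$ and $\mathbf{p}=(p_1,\ldots,p_n)$ a tuple in which $p_i$ is a path from $s_i$ to $t_{\sigma(i)}$; the rearrangement is legitimate coefficient-by-coefficient in $q$ thanks to the finiteness hypothesis on $P(s\rto t)$. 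Partitioning this sum according to whether $\mathbf{p}$ is non-crossing or not, the non-crossing part is exactly $\sum_{\sigma}(-1)^{|\sigma|}P_{nc}(S \rto \sigma(T))$, which is the left-hand side of the lemma. The task thus reduces to showing that the contribution of crossing tuples cancels.

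To cancel it I would construct a sign-reversing involution $\phi$ on the set of crossing pairs. Given a crossing pair $(\sigma,\mathbf{p})$, let $i$ be the smallest index for which $p_i$ shares a vertex with some other $p_k$; let $v$ be the first vertex of $p_i$ at which such a coincidence occurs; and let $j$ be the smallest index (necessarily $j>i$, since by choice of $i$ no $p_k$ with $k<i$ meets $p_i$) of a path passing through $v$. Define $\phi(\sigma,\mathbf{p})=(\sigma',\mathbf{p}')$ where $\mathbf{p}'$ is obtained by swapping the tails of $p_i$ and $p_j$ at $v$ (so $p'_i$ follows $p_i$ up to $v$ and then $p_j$ from $v$ onward, and vice versa, while the remaining paths are unchanged), and $\sigma'=\sigma\circ(i\;j)$. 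Then $p'_i$ goes from $s_i$ to $t_{\sigma(j)}=t_{\sigma'(i)}$ and similarly for $p'_j$, so $\phi$ is well-defined; the multiset of edges used is unchanged, so $w(p'_1)\cdots w(p'_n)=w(p_1)\cdots w(p_n)$; and the sign is negated because $\sigma'$ differs from $\sigma$ by a transposition.

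The main (and essentially only non-trivial) obstacle is to verify that $\phi^2=\mathrm{id}$. For this one has to check that the triple $(i,j,v)$ extracted from $(\sigma',\mathbf{p}')$ coincides with the triple extracted from $(\sigma,\mathbf{p})$. The index $i$ is preserved because $p'_1,\ldots,p'_{i-1}$ are unchanged (hence remain non-crossing) and $p'_i$ still meets $p'_j$ at $v$. The vertex $v$ is preserved because the initial segment of $p'_i$ up to $v$ equals that of $p_i$, so no meeting with another path can occur earlier along $p'_i$. Finally, the index $j$ is preserved because the set of indices of paths passing through $v$ is $\{i,j\}\cup K$ both before and after the swap, for the same subset $K$ of other indices. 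Consequently $\phi$ is a fixed-point-free involution on the crossing pairs that negates the sign while preserving the weight, so all crossing contributions cancel in pairs, which completes the argument.
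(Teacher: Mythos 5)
Your proof is correct and is the standard tail-swapping (sign-reversing involution) argument. The paper itself does not prove the Lindstr\"{o}m--Gessel--Viennot lemma; it merely states it and points to \cite{Lind}, \cite{Gessel Viennot}, and \cite{Aigner} for proofs, so there is no in-paper argument to compare against. Your version of the involution (smallest crossing index $i$, first crossing vertex $v$ along $p_i$, smallest partner index $j$ through $v$, swap tails at $v$, compose $\sigma$ with the transposition $(i\,j)$) is one of the correct canonical choices, and your verification that $(i,v,j)$ is recovered after the swap is the genuinely delicate part, which you handle properly. One small caveat you should make explicit: the step ``the initial segment of $p'_i$ up to $v$ cannot meet $p'_j$ before $v$'' implicitly uses that $p_i$ does not revisit a vertex, i.e.\ that paths in $G$ are simple. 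In the paper's setting this holds because $G$ is acyclic (horizontal edges only go rightward, so there are no directed cycles), but it is worth stating, since for a general oriented graph with only the stated finiteness-of-weighted-paths hypothesis one would still need simplicity (or acyclicity) for the tail-swap to preserve the first-crossing data.
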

In most examples (and in all examples in this paper) $P_{nc}(S\rto \sigma(T))\neq 0$ for only one permutation $\sigma$. 
In this case 
\[P_{nc}(S\rto \sigma(T))=(-1)^{|\sigma|}\det\begin{pmatrix}
P(s_i \rto t_j)
\end{pmatrix}_{i,j=1}^n.\]

\subsection{\!\!}
In this paper we use graph $G$ with vertices $(a+\frac12,b)$, where $a,b\in \mathbb{Z}, b\geq 0$. There are two types of edges namely the horizontal ones $(a+\frac12,b) \rightarrow (a+\frac32,b)$ ($\rightarrow$ denotes orientation) and vertical ones $(a+\frac12,b) \rightarrow (a+\frac12,b+1)$ for $a<0$ and $(a+\frac12,b) \leftarrow (a+\frac12,b+1)$ for $a\geq 0$. The weight of a vertical edge is 1, the weight of a horizontal edge on the line $y=b$ is $q^{b}$. 

Note that the number of paths from $s=(\frac{1}{2},b)$ to $t=(\frac{1}{2}+a,0)$, $a,b \geq 0$ is equal to the binomial coefficient $\binom{a+b}b$. The number of paths counted with weights is equal to the $q$-binomial coefficient $P(s \rto t)=\qbinom{a+b}{b}q$.

We will use ``infinitely remote'' source and target vertices, see an example in Fig. \ref{fig:paths:m-0}. We say that a path starts at the point $(-\infty,b)$ if the path contains all sufficiently left edges on the horizontal line $y=b$. Similarly we define paths which start at the point $(a,+\infty)$ or go to the point $(+\infty,b)$ or $(a,+\infty)$. For example, the paths from the point $s=(-\infty,0)$ to $t=(-\frac12,+\infty)$ are in one to one correspondence with Young diagrams. And in this case $P(s \rto t)$ is equal to the generating function of Young diagrams $1/(q)_\infty$.

For the ``infinitely remote'' source and target vertices we need to define the weight of the path. The problem happens for the vertices $(-\infty,b)$ since their paths contain infinitely many horizontal edges on the line $y=b$ and therefore the weights of these paths are not defined. We divide by $q^{b}$ the weight of each horizontal edge (of such paths) over the point $(i,0)$, $i<0$. Clearly there is no more than one such edge, if there is none we just divide the weight of the path by $q^b$. Informally speaking, we assign the weight $q^{-b(\infty/2-1/2)}$ to the vertex $(-\infty,b)$. For example, for $s=(-\infty,b)$, $t=(-a-\frac12,+\infty)$, $a,b \geq 0$ we have $P(s \rto t)=q^{-ab}/(q)_\infty$.

For the $(+\infty,b)$ we divide by $q^{b}$ the weight of each horizontal edge (of path to the $(+\infty,b)$) over the point $(i,0)$, $i\geq 0$.  Informally speaking we assign the weight $q^{-b(\infty/2+1/2)}$ to the vertex $(+\infty,b)$. For example, for $s=(a+\frac12,+\infty)$, $t=(+\infty,b)$, $a,b \geq 0$ we have $P(s \rto t)=q^{-(a+1)b}/(q)_\infty$.

Now we prove the formula  \eqref{eq:Wn:char} for the number of plane partitions  with $n$ rows and asymptotic conditions.

\begin{Proposition}\label{pr:m=0LGV} 
The generating function of plane partitions $\{a_{i,j}\}$, such that $1 \leq i \leq n$, $j \in \mathbb{N}$, $a_{i,j}=\infty$ iff $(i,j) \in \lambda$, $\lim_{j \rightarrow \infty} a_{ij}=\nu_i$ has the form 
\[
\chi_{\varnothing,\nu,\lambda}^{n,0}(q) =
\frac{q^{\sum_{i=1}^n (\lambda_i+n-i)(\nu_{i}+n-i) }}{(q)_\infty^n} a_{\nu+\rho}(q^{-\lambda-\rho}).
\]
\end{Proposition}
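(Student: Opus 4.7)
The plan is to apply the Lindstr\"{o}m--Gessel--Viennot (LGV) lemma to a system of $n$ non-intersecting lattice paths in the graph $G$ of subsection~3.2, with sources and targets encoding the data of $\lambda$ and $\nu$ after a $\rho$-shift.

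To each plane partition $\{a_{i,j}\}$ satisfying the hypotheses I associate an $n$-tuple of paths as follows. The $i$-th row has an upper silhouette in the $(j,h)$-plane that descends from $+\infty$ at column $\lambda_i+1$, traces the weakly decreasing staircase of finite values $a_{i,\lambda_i+1}\geq a_{i,\lambda_i+2}\geq\cdots$, and exits to the right at height $\nu_i$. Translating this silhouette by the vector $(n-i,n-i)$ yields a path $p_i$ in $G$ from $s_i=(L_i+\frac12,+\infty)$ to $t_i=(+\infty,N_i)$, where $L_i=\lambda_i+n-i$ and $N_i=\nu_i+n-i$. The column condition $a_{i,j}\geq a_{i+1,j}$ combined with the $\rho$-shift becomes a strict inequality between consecutive shifted silhouettes at every column, so $(p_1,\ldots,p_n)$ is non-intersecting; the inverse map is clear, and since $L_1>\cdots>L_n\geq 0$ and $N_1>\cdots>N_n\geq 0$ one checks that only the identity permutation contributes to the LGV sum (any transposition would force two paths to cross, as follows by comparing their relative positions at the starts and the ends).

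Using the formula $P((a+\frac12,+\infty)\to(+\infty,b))=q^{-(a+1)b}/(q)_\infty$ from subsection~3.2 specialized to $(a,b)=(L_i,N_i)$, each path $p_i$ encoding the Young diagram $\mu^{(i)}$ with $\mu^{(i)}_j=a_{i,\lambda_i+j}-\nu_i$ has weight $q^{|\mu^{(i)}|-(L_i+1)N_i}$; hence $\prod_i w(p_i)=q^{|a|-\sum_i(L_i+1)N_i}$. Summing over all plane partitions and applying LGV yields
\[
\chi^{n,0}_{\varnothing,\nu,\lambda}(q)=\frac{q^{\sum_i(L_i+1)N_i}}{(q)_\infty^n}\det\bigl(q^{-(L_i+1)N_j}\bigr)_{i,j=1}^n.
\]
Factoring $q^{-N_j}$ out of the $j$-th column reduces the determinant to $\det(q^{-L_iN_j})=a_{\nu+\rho}(q^{-\lambda-\rho})$, and the leftover exponent $\sum_i(L_i+1)N_i-\sum_j N_j$ collapses to $\sum_iL_iN_i$, giving the claimed formula.

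The main technical point is the weight computation for a single path: one must verify carefully that the renormalization of the infinite tail of horizontal edges at the target vertex $(+\infty,N_i)$, combined with the edge weights $q^h$ on the right half, produces exactly the shift $-(L_i+1)N_i$ in the exponent, as encoded by the example formula quoted from subsection~3.2. Once that is in hand, the remaining steps reduce to routine column operations on the resulting Vandermonde-type determinant.
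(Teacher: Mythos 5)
Your proposal is correct and follows essentially the same route as the paper's proof: the same bijection sending the $i$-th row (shifted by $\rho$) to a path from $s_i=(\lambda_i+n-i+\frac12,+\infty)$ to $t_i=(+\infty,\nu_i+n-i)$, the same single-path weight $q^{-(L_i+1)N_i}/(q)_\infty$, and the same application of Lindstr\"om--Gessel--Viennot followed by extracting the overall power of $q$. The only difference is that you spell out the final column operations explicitly, which the paper leaves implicit.
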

\begin{proof} There is a natural bijection between such plane partitions and collections of non-intersecting paths from $S=\{s_1,\ldots,s_n\}$, $s_i=(\lambda_i+n-i+\frac12,+\infty)$ to $T=\{t_1,\ldots,t_n\}$, $t_i=(+\infty,\nu_i+n-i)$. The first row of the plane partition encodes the path from $s_1$ to $t_1$, the second row of the plane partition encodes the path from $s_2$ to $t_2$ and so on. The coordinates of the sources and targets are specified in such a way that plane partition condition $a_{i,j}\geq a_{i+1,j}$ is equivalent to the non-intersection of paths. 

In the Fig. \ref{fig:paths:m-0} we give an example, where $n=3$, $\lambda=(2,1,1)$, $\nu=(3,1,1)$.

\begin{figure}[h]
\begin{tikzpicture}[scale=.5,font=\small,baseline={([yshift=-.5ex]current bounding box.center)}]
\draw[thick] (11,0)--(2,0)--(2,-1)--(1,-1)--(1,-3)--(11,-3);
\draw[help lines] (0,0) grid (11,-3);
\foreach \m [count=\y] in {%
    {\infty,\infty,3,3,3,3,3,3,3,\dots,3},
    {\infty,3,3,2,1,1,1,1,1,\dots,1},
    {\infty,3,3,1,1,1,1,1,1,\dots,1},
  } {
  \foreach \n [count=\x] in \m {
     \node at (\x-.5,-\y+.5) {$\n$};
  }
}
\end{tikzpicture}
\quad$\longleftrightarrow$\quad
\begin{tikzpicture}[x=1.5em,y=1.5em,font=\small,baseline={([yshift=-.5ex]current bounding box.center)}]
\draw[step=1.5em,gray, dotted] (3.5,-0.5) grid (14.5,7.5);
\draw[fill] (7,8) circle (2pt) node [anchor=south east] {$s_3$};
\draw[fill] (8,8) circle (2pt) node [anchor=south east] {$s_2$};
\draw[fill] (10,8) circle (2pt) node [anchor=south east] {$s_1$};
\draw[fill] (15,1) circle (2pt) node [anchor=north west] {$t_3$};
\draw[fill] (15,2) circle (2pt) node [anchor=north west] {$t_2$};
\draw[fill] (15,5) circle (2pt) node [anchor=north west] {$t_1$};
\draw[-,thick] (7,7.5) -- (7,3) -- (9,3) -- (9,1) -- (14.5,1);
\draw[-,thick] (8,7.5) -- (8,4) -- (10,4) -- (10,3) -- (11,3) -- (11,2) -- (14.5,2);
\draw[-,thick] (10,7.5) -- (10,5) --  (14.5,5);
\draw[dashed] (3.5,0) -- (14.5,0);
\draw[dashed] (5.5,-0.5) -- (5.5,7.7);
\end{tikzpicture}
\caption{\label{fig:paths:m-0}}
\end{figure}

As was noted before we have $P(s_i \rto t_j)=q^{-(\lambda_i+n-i+1)(\nu_j+n-j)}/(q)_\infty$. Therefore using the Lindstr\"{o}m-Gessel-Viennot lemma we get 
\[
P(S\rto T)=\det\begin{pmatrix}
\dfrac{q^{-(\lambda_i+n-i+1)(\nu_j+n-j)}}{(q)_\infty}
\end{pmatrix}.
\]
Note that the function $P(S\rto T)$ differs from $\chi_{\varnothing,\nu,\lambda}^{n,0}(q)$ by a certain power of $q$ since our grading on paths differs slightly from the definition \eqref{eq:pp:grad}. In particular, $\chi_{\varnothing,\nu,\lambda}^{n,0}(q)$ has the leading term 1, but $P(S\rto T)$ has the leading term $q^{-\sum_i(\lambda_i+n-i+1)(\nu_i+n-i)}$. Multiplying $P(S\rto T)$ by $q^{\sum_i(\lambda_i+n-i+1)(\nu_i+n-i)}$ we get Proposition \ref{pr:m=0LGV}. 
\end{proof}
\subsection{\!\!} We have not discussed one type of paths between ``infinitely remote'' vertices. Namely let $s=(-\infty,b)$, $t=(+\infty,a)$. Then the paths from $s$ to $t$ are in one-to-one correspondence with $V$-partitions with asymptotic conditions $\lim_{i\rightarrow \infty} a_i=a$, $\lim_{i\rightarrow \infty} b_i=b$, see Fig. \ref{fig:paths:R}.

\begin{figure}[h]
\begin{tikzpicture}[x=1.5em, y=1.5em, font = \small]
\draw (0,0.5) node [anchor=east] {$\left(5\;\; \begin{matrix}
4 & 4 & 2 & 2 & \ldots\\
3 & 3 & 1 & 1 &\ldots
\end{matrix} \right)$ \qquad $\longleftrightarrow$ \qquad };
\draw[step=1.5em,gray, dotted] (1.5,-2.5) grid (13.5,4.5);
\draw[fill] (1,-1) circle (2pt) node [anchor=south east] {$s$};
\draw[fill] (14,0) circle (2pt) node [anchor=north west] {$t$};
\draw[-,thick] (1.5,-1) -- (5,-1) -- (5,1) -- (7,1) -- (7,3) -- (8,3) -- (8,2) -- (10,2) -- (10,0) -- (13.5,0);
\draw[dashed] (1.2,-2) -- (13.8,-2);
\draw[dashed] (7.5,-2.5) -- (7.5,4.7);
\end{tikzpicture}
\caption{\label{fig:paths:R}}
\end{figure}

Recall that the generating function of $V$-partitions was given in Lemma \ref{lem:V(a,b)} and equals $R(a-b;q)$. Now we are ready to prove Theorem \ref{th:1}.

\begin{proof}[Proof of Theorem \ref{th:1}]
First, we use a one-to-one correspondence between plane partitions satisfying \eqref{eq:asymp},\eqref{eq:pit} and certain lattice paths. We decompose the base of plane partition into $r$ infinite hooks, $m-r$ infinite columns and $n-r$ infinite rows. 

We set the sources and targets to be the points
\begin{align*}
s_i=&\left\{ \begin{aligned} 
&(-\infty,M_i)\;& \text{for } 1\leq &i \leq m,
\\  
&(P_{i-m}+\frac12,+\infty)\;& \text{for } m+1 \leq &i \leq m+n-r\end{aligned} 
\right.,
\\
t_j=&\left\{ \begin{aligned} &(+\infty,N_j)\; &\text{for } 1\leq &j \leq n,\\ &(-Q_{j-n}-\frac12,+\infty)\; &\text{for } n+1 \leq &j \leq m+n-r
\end{aligned}\right.. 
\end{align*}

We illustrate the correspondence in the Fig. \ref{fig:paths:mngeneric}, where  we have $n=3$, $m=2$, $\lambda=(2,1,1)$, $\nu=(3,1,1)$, $\mu=(2,0)$. By previous definitions $r=1$, $\pi=(1,0)$, $\kappa=(1)$, $N_1=5$, $N_2=2$, $N_3=1$, $M_1=3$, $M_2=0$, $P_1=2$, $P_2=0$, $Q_1=1$.

\begin{figure}[h]
\begin{tikzpicture}[scale=.5,font=\small,baseline={([yshift=-.5ex]current bounding box.center)}]
\draw[thick] (0,-9)--(0,-3)--(1,-3)--(1,-1)--(2,-1)--(2,0)--(11,0) (11,-3)--(2,-3)--(2,-9);
\clip (0,0)--(11,0)--(11,-3)--(2,-3)--(2,-9)--(0,-9)--cycle;
\draw[help lines] (0,0) grid (11,-9);
\foreach \m [count=\y] in {%
    {\infty,\infty,3,3,3,3,3,3,3,\dots,3},
    {\infty,3,3,2,1,1,1,1,1,\dots,1},
    {\infty,3,3,1,1,1,1,1,1,\dots,1},
    {4,2},
    {3,1},
    {2,0},
    {2,0},
    {\dots,\dots},
    {2,0},
  } {
  \foreach \n [count=\x] in \m {
     \node at (\x-.5,-\y+.5) {$\n$};
  }
}
\end{tikzpicture}
$\longleftrightarrow$ 
\begin{tikzpicture}[scale=.9,x=1.5em,y=1.5em,font=\small,baseline={([yshift=-.5ex]current bounding box.center)}]
\draw[step=1.5em,gray, dotted] (1.5,-0.5) grid (14.5,8.5);
\draw[fill] (1,0) circle (2pt) node [anchor=south east] {$s_2$};
\draw[fill] (1,3) circle (2pt) node [anchor=south east] {$s_1$};
\draw[fill] (8,9) circle (2pt) node [anchor=south east] {$s_4$};
\draw[fill] (10,9) circle (2pt) node [anchor=south east] {$s_3$};
\draw[fill] (6,9) circle (2pt) node [anchor=south east] {$t_4$};
\draw[fill] (15,1) circle (2pt) node [anchor=north west] {$t_3$};
\draw[fill] (15,2) circle (2pt) node [anchor=north west] {$t_2$};
\draw[fill] (15,5) circle (2pt) node [anchor=north west] {$t_1$};
\draw[-,thick] (1.5,0) -- (5,0)  -- (5,1) -- (6,1) -- (6,2) -- (7,2) -- (7,3) -- (9,3) -- (9,1) --(14.5,1);
\draw[-,thick] (1.5,3) -- (4,3)  -- (4,4) -- (5,4) -- (5,5) -- (6,5) -- (6,8.5) ;
\draw[-,thick] (8,8.5) -- (8,4) -- (10,4) -- (10,3) -- (11,3) -- (11,2) -- (14.5,2);
\draw[-,thick] (10,8.5) -- (10,5) --  (14.5,5);
\draw[dashed] (1.5,0) -- (14.5,0);
\draw[dashed] (7.5,-0.7) -- (7.5,8.7);
\end{tikzpicture}
\caption{\label{fig:paths:mngeneric}}
\end{figure}

Due to our order of $s_i$ and $t_j$ non-intersecting paths correspond to the permutation
\[\begin{pmatrix}
s_1 & s_2 & \ldots & s_{m-r} & s_{m-r+1} & \ldots  & s_m & s_{m+1} & \ldots & s_{m+n-r-1} & s_{m+n-r} 
\\
t_{n+1} & t_{n+2} & \ldots & t_{m+n-r} & t_{n-r+1} & \ldots  & t_n & t_{1} & \ldots & t_{n-r-1} & t_{n-r}
\end{pmatrix}
\]
We denote this permutation of indexes $1,\ldots,m+n-r$ by $\sigma_{m,n,r}$.
So we proved that
\[\chi_{\mu,\nu,\lambda}^{n,m}(q)=q^{\ldots} P(S \rto \sigma_{m,n,r}(T)).\]

We compute the value $P(S \rto \sigma_{m,n,r}(T))$ from the Lindstr\"{o}m-Gessel-Viennot lemma. The number of inversions in the permutation $\sigma_{m,n,r}$ is  equal to  $mn-r^2$.
It remains to evaulate $P(s_i \rto t_j)$, which have been actually found above
\[P(s_i \rto t_j)=\left\{ \begin{aligned} 
&R(N_j-M_i;q)\; &\text{for } &1\leq i \leq m,\, &1\leq j \leq n,\\ 
&q^{-M_iQ_{j-n}}/(q)_\infty \; &\text{for } &1\leq i \leq m,\, &n+1 \leq j \leq m+n-r\\ 
&q^{-N_j(P_{i-m}+1)}/(q)_\infty \; &\text{for } m+1\leq &i \leq m+n-r,\, &1 \leq j \leq n\\
&0 \; &\text{for } m+1\leq &i \leq m+n-r,\, &n+1 \leq j \leq m+n-r\end{aligned}
\right..\] 
Combining all together we obtain Theorem \ref{th:1}. As above, the additional factor $q^{\sum_{j=1}^{m-r}{M_jQ_j}+\sum_{i=1}^{n-r} N_i(P_i+1)}$ comes from the difference between definition of grading in terms of paths and \eqref{eq:pp:grad}.
\end{proof}
\subsection{\!\!} \label{ssec:LGV:lem} In this Subsection we prove Lemma \ref{lem:LGV=bos}.

\begin{proof}
We want to calculate the determinant of the matrix 
\[
\mathrm{M}=\begin{pmatrix}
\left(\sum_{a\geq 0}(-1)^a q^{\binom{a+1}2}q^{(N_j-M_i)a}\right)_{\substack{1 \leq i \leq m\\ 1 \leq j \leq n}} & 
\left(q^{-M_iQ_{j}}\right)_{\substack{1 \leq i \leq m\\ 1 \leq j \leq m-r}}\\ 
\left(q^{-N_j(P_{i}+1)}\right)_{\substack{1 \leq i \leq n-r\\ 1 \leq j \leq n}}  & 0
\end{pmatrix}.\]
Denote 
\[
\widetilde{\mathrm{M}}=\begin{pmatrix}
\left(\sum_{a\geq 0}(-1)^a q^{\binom{a+1}2}q^{(N_j-M_i)a}\right)_{\substack{1 \leq i \leq m\\ 1 \leq j \leq n}} & 
\left((-1)^{Q_j}q^{\binom{Q_j+1}2}q^{-M_iQ_{j}}\right)_{\substack{1 \leq i \leq m\\ 1 \leq j \leq m-r}}\\ 
\left(q^{-N_j(P_{i}+1)}\right)_{\substack{1 \leq i \leq n-r\\ 1 \leq j \leq n}}  & 0
\end{pmatrix}.\]
Clearly, we have $\det\mathrm{M}=\mathrm{C}\det \widetilde{\mathrm{M}}$, where 
$\mathrm{C}=(-1)^{\sum Q_j}q^{-\sum \binom{Q_j+1}2}$. We decompose the matrix $\widetilde{\mathrm{M}}$ as a product of two (infinite) matrices
\[ 
\widetilde{\mathrm{M}}=\begin{pmatrix}
0 & \left((-1)^aq^{\binom{a+1}2-aM_i}\right)_{\substack{1\leq i \leq m, \\ a \geq 0}}\\ 
\left(\delta_{-a-1,P_i} \right)_{\substack{1\leq i \leq n-r,\\ a <0 }} & 0
\end{pmatrix}
\begin{pmatrix}
 \left(q^{aN_j}\right)_{\substack{a \in \mathbb{Z}, \\ 1 \leq j \leq n}} 
& \left((-1)^{Q_j}\delta_{a,Q_j} \right)_{\substack{a \in \mathbb{Z},\\  1\leq j \leq m-r}}
\end{pmatrix},
\]
Now we apply the Cauchy--Binet formula, the numbers of columns in the minor from the first factor (the numbers of rows in the minor from the second factor) we denote by $-P_1-1,\ldots,-P_{n-r}-1, A_1,\ldots,A_r, Q_1,\ldots, Q_{m-r},$ and get
\[
\det \mathrm{M}=(-1)^{(m-r)(n-r)}\sum_{A_1>A_2>\ldots>A_r\geq 0} \!\!\!\!\!\! (-1)^{\sum\limits_{i=1}^r A_i}q^{\sum\limits_{i=1}^r \binom{A_i+1}2} a_{N}(q^A,q^{-P-1})a_{M}(q^{-A},q^{-Q})
\]
In this sum we a-priori have extra condition  $A_i \neq Q_j$ but we ignore it since the corresponding terms vanish.

Therefore we proved that the right sides of \eqref{eq:chi:LGV} and \eqref{eq:chi:bos} are equal.
\end{proof}

\section{Integer Points in Polyhedra} \label{sec:Brion}

\subsection{\!\!} In this section we give a combinatorial proof of Theorem \ref{th:2'}. The proof is based on Brion's theorem which we briefly recall. Standard references for this theorem are \cite{Brion}, \cite{KhPu1}, \cite{KhPu2}, \cite{Lawr}, for a clear introduction see, for example, \cite{Beck Haase}.

Let $P \subset \mathbb{R}^N$ be a convex polyhedron, i.e., an intersection of a finite number of half-spaces. Note that $P$ can be unbounded. For simplicity we assume below that vertices of $P$ have integer coordinates and edges have rational directions. 

For a point $p=(p_1,\ldots,p_N)\in \mathbb{Z}^N$ by $t^p$ we denote $t_1^{p_1}\cdots t_N^{p_N}$, where $t_1,\ldots,t_N$ are formal variables. Define the characteristic function of  $P$ by the formula
\[
S(P)= \sum_{p \in P \cap \mathbb{Z}^n}t^p.
\]
In this definition $S(P)$ is a formal series, $S(P)\in \mathbb{Z}[[t_1^{\pm1},\ldots, t_N^{\pm1}]]$. It can be proven that there exist two Laurent polynomials $f,g \in \mathbb{Z}[t_1^{\pm1},\ldots, t_N^{\pm1}]$ such that $f S(P)=g$ (see e.g. \cite[Theorem 13.8]{Barvinok}). We denote $\mathbb{S}(P)=g/f \in \mathbb{Q}(t_1,\ldots,t_n)$. Clearly $\mathbb{S}(P)$ does not depend on the particular choice of $f,g \in \mathbb{Z}[t_1^{\pm1},\ldots, t_N^{\pm1}]$ 

For any vertex $v \in P$, we denote by $\mathcal{K}_v$ its cone, i.e., the intersection of half-spaces corresponding to the facets (maximal proper faces) of $P$ containing
$v$.

\begin{Theorem*}[Brion] For any convex polyhedron $P$ with integer vertices and rational directions of edges we have
\[ \mathbb{S}(P)=\sum_{v} \mathbb{S}(\mathcal{K}_v).\]
\end{Theorem*}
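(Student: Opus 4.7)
The plan is to prove Brion's theorem via the classical three-step argument based on valuations: first, extend $P\mapsto \mathbb{S}(P)$ to a linear valuation on the algebra of indicator functions of rational polyhedra; second, show that $\mathbb{S}$ vanishes on any polyhedron containing an affine line; and third, use the Brianchon--Gram identity to reduce the alternating sum of tangent-cone generating functions over all faces of $P$ to the sum over vertices.

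To establish the valuation property I would check that for any rational polyhedron $P$ the formal series $S(P)$ converges to a rational function $\mathbb{S}(P)\in\mathbb{Q}(t_1,\dots,t_N)$ on a nonempty open cone $U_P\subset\mathbb{R}^N$, namely the interior of the dual of the recession cone of $P$. Since finitely many such open cones always share a nonempty open region, a linear relation $\sum c_i\mathbf{1}_{P_i}=0$ between indicator functions translates into the equality $\sum c_i S(P_i)=0$ of analytic functions on this common region, and hence $\sum c_i \mathbb{S}(P_i)=0$ as rational functions. The vanishing lemma is then immediate: if $P$ is translation-invariant under a nonzero integer vector $u$, then $t^u S(P)=S(P)$ on $U_P$, forcing $(1-t^u)\mathbb{S}(P)=0$ in $\mathbb{Q}(t_1,\dots,t_N)$ and hence $\mathbb{S}(P)=0$.

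The combinatorial input is the Brianchon--Gram identity
\[
\mathbf{1}_P \;=\; \sum_{F} (-1)^{\dim F}\,\mathbf{1}_{\mathcal{K}_F},
\]
where $F$ ranges over all nonempty faces of $P$ and $\mathcal{K}_F$ is the tangent cone along $F$. This identity I would prove pointwise: for fixed $x\in\mathbb{R}^N$, the faces $F$ with $x\in\mathcal{K}_F$ form the face poset of an auxiliary polytope (obtained by intersecting $P$ with a small ball around $x$, or by a perturbation argument when $x\notin P$), and the alternating sum of $(-1)^{\dim F}$ over this poset equals $\mathbf{1}_P(x)$ by the Euler relation for polytopes. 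Applying the valuation $\mathbb{S}$ to Brianchon--Gram and invoking the vanishing lemma — each positive-dimensional face $F$ has tangent cone containing the affine span of $F$, hence a line, so $\mathbb{S}(\mathcal{K}_F)=0$ — collapses the right-hand side to exactly the vertex contributions, yielding $\mathbb{S}(P)=\sum_v \mathbb{S}(\mathcal{K}_v)$.

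The main obstacle is the first step: proving that $\mathbb{S}$ is a genuine valuation requires matching the analytic description (generating function on a region of convergence) with the algebraic one (rational function in $t_1,\dots,t_N$), and in particular checking that the assignment $P\mapsto\mathbb{S}(P)$ extends consistently to polyhedra containing lines (where $S(P)$ itself need not converge anywhere). Choosing a common region of convergence for all cones appearing in the Brianchon--Gram identity takes some care but can be bypassed entirely by working in Barvinok's polyhedra algebra and constructing $\mathbb{S}$ directly as a valuation via the Lawrence--Varchenko signed decomposition; once this formal framework is in place, the remaining steps are straightforward.
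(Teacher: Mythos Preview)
The paper does not prove Brion's theorem at all: it is quoted as a black-box result with references to \cite{Brion}, \cite{KhPu1}, \cite{KhPu2}, \cite{Lawr}, and the expository article \cite{Beck Haase}, and then applied to compute generating functions of plane partitions. So there is no ``paper's own proof'' to compare against.

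Your outline is the standard modern proof (essentially the one in Barvinok's book \cite{Barvinok} and in \cite{Beck Haase}): establish that $P\mapsto\mathbb{S}(P)$ is a valuation on the algebra of indicator functions of rational polyhedra, observe that $\mathbb{S}$ kills any polyhedron containing a line, and apply the Brianchon--Gram decomposition to collapse the alternating sum over faces down to the vertices. This is correct and is exactly the argument the cited references give. Your caveat about the first step is well placed: the delicate point is defining $\mathbb{S}$ consistently on polyhedra with lines, where the series $S(P)$ converges nowhere; one either works formally in the polytope algebra (Lawrence, Khovanskii--Pukhlikov) or argues analytically by restricting to a common region of convergence after a signed decomposition into pointed cones. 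Either route closes the gap.
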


Plane partitions satisfying \eqref{eq:pit} and \eqref{eq:asymp} are integer points of the polyhedron $P^{n,m}_{\mu,\nu,\lambda}$ in the space with coordinates $t_{i,j}$ $(i,j)\in \mathbb{N}^2\setminus \lambda$. The polyhedron $P^{n,m}_{\mu,\nu,\lambda}$ is defined by the inequalities
\begin{equation}\label{eq:Pnmlnm}
P^{n,m}_{\mu,\nu,\lambda}\colon
\quad
\left\{ 
\begin{aligned}
&t_{i,j}\geq  t_{i,j+1}\; &t_{i,j}\geq  \nu_i\geq 0\\
&t_{i,j}\geq  t_{i+1,j}\; &t_{i,j}\geq  \mu_j\geq 0\\
&t_{n+1,m+1}=0 &
\end{aligned}
\right., \quad 
\end{equation}
Therefore the functions $\chi^{n,m}_{\mu,\nu,\lambda}(q)$ can be computed using Brion theorem.

Two remarks are in order. First, we stated Brion theorem for finite dimensional polyhedra, but $P^{n,m}_{\mu,\nu,\lambda}$ is infinite dimensional. Therefore we start from the finitization of $P^{n,m}_{\mu,\nu,\lambda}$, i.e., for $H, H' \in \mathbb{N}$ we 
consider the polyhedron $P^{n,m,(H,H')}_{\mu,\nu,\lambda}$  defined as 
\begin{equation}\label{eq:PnmlnmH}
P^{n,m,(H,H')}_{\mu,\nu,\lambda}\colon
\quad
\left\{ 
\begin{aligned}
&t_{i,j}\geq  t_{i,j+1}\; &t_{i,H}=\nu_i\geq 0\\
&t_{i,j}\geq  t_{i+1,j}\; &t_{H',j}=\mu_j\geq 0\\
&t_{n+1,m+1}=0 &
\end{aligned}
\right., \quad (i,j)\in \{1,\ldots,H'\}\times\{1,\ldots,H\}\setminus \lambda.
\end{equation}
We compute $\mathbb{S}(P^{n,m,(H,H')})$ and then take the limit $H,H' \rightarrow \infty$.

Second, we need a specialization of the function $\mathbb{S}(P)$ in which $t_{i,j} \rightarrow q$. We denote by $\mathbb{S}_q(P)\in \mathbb{Q}(q)$ the function obtained by composition of $\mathbb{S}$ and this specialization. The limit  $\lim_{H,H' \rightarrow \infty}q^{-\Delta^{(H,H')}}\mathbb{S}_q(P^{n,m,(H,H')}_{\mu,\nu,\lambda})$ coincides with $\chi^{n,m}_{\mu,\nu,\lambda}(q)$. Here the numbers $\Delta^{(H,H')}$ emerge due to different definitions of grading, see below. 

It will be convenient to start from a specialization $t_{i,j} \rightarrow x_{j-i+1}/x_{j-i}$. We denote by $\mathbb{S}_x(P)\in \mathbb{Q}(\{x_i\})$ composition of $\mathbb{S}$ and this specialization. Then we can set $x_i \rightarrow q^{i}$ and get $\mathbb{S}_q(P)$.

\subsection{\!\!} We explain main ideas in the case $m=0$. As the result we get a new proof of~\eqref{eq:Wn:char}. We will work under an additional assumption of strict inequalities \[\nu_1>\ldots>\nu_n ,\] in the Section \ref{ssec:nu<mu} we briefly explain (following \cite{Makhlin}) how to remove this assumption.

We start from a description of vertices of the polyhedron $P^{n,0, (H,H')}_{\varnothing,\nu,\lambda}$. Since $t_{n+1,1}=0$ one can think that indices $(i,j)$ of coordinates $t_{i,j}$ satisfy $1 \leq i \leq n$, $1 \leq j \leq H$, $(i,j) \not \in \lambda$, i.e., $(i,j)$ lies in a skew Young diagram that we will denote $(H^n)-\lambda$. Any face of our polyhedron is defined by \eqref{eq:PnmlnmH} where some of the inequalities  become equalities. For any face we construct a graph $\Gamma$ with vertices $(i,j)$ satisfying conditions above. Two vertices $(i,j)$ and $(i',j')$ are connected by an edge iff $t_{i,j}=t_{i',j'}$ for all points of the face and boxes $(i,j)$ and $(i',j')$ have a common side. 

There exist at least $n$ connected components in $\Gamma$ since $t_{i,H}=\nu_i$ and $\nu_i>\nu_j$ for $i>j$. Vertices of our polyhedron are faces of maximal codimension, i.e., corresponding to graphs having exactly $n$ connected components. See an example in Fig.~\ref{fig:graphface}. 
\begin{figure}[!h]
\begin{tikzpicture}
[scale=.72,nodes={font=\small}]
\clip[preaction={draw=black,thick}]
  (0,0)--(0,2)--(2,2)--(2,4)--
  (12,4)--(12,0)--cycle;
\draw[help lines,dashed] (0,0) grid (11.5,4);

\draw[thick]
  (.5,1.5)--(3.5,1.5)--(3.5,3.5)--(11.2,3.5)
  (2.5,1.5)--(2.5,2.5)--(4.5,2.5)--(4.5,3.5)
  (2.5,2.5)--(2.5,3.5)--(3.5,3.5)
  (.5,.5)--(4.5,.5)--(4.5,1.5)--(5.5,1.5)--(5.5,2.5)--(11.2,2.5)
  (5.5,.5)--(8.5,.5)--(8.5,1.5)--(11.2,1.5)
  (6.5,.5)--(6.5,1.5)--(9.5,1.5) (7.5,.5)--(7.5,1.5)
  (9.5,.5)--(11.2,.5)
  ;
\foreach \i in {1,...,4} {
  \foreach \j in {1,...,11} {
     \draw[fill=white] (\j-.5,4.5-\i) circle (.75mm);
  }
  \node at (11.5,{4.5-\i}) {$\nu_{\i}$};
}
\draw[thick,preaction={draw=white}]
  (0,0)--(0,2)--(2,2)--(2,4)--
  (12,4)--(12,0)--cycle;
\end{tikzpicture}
\caption{\label{fig:graphface}}
\end{figure}

Denote by $\Gamma_v$ the graph corresponding to the vertex $v$. Denote by $K_s$ connected components of~$\Gamma_v$. Each $K_s$ is a skew Young diagram. Denote by $\mathcal{K}_{s,v}$ projection of the cone $\mathcal{K}_v$ on the subspace with coordinates $t_{i,j}$ for $(i,j) \in K_s$. The inequalities defining the cone $\mathcal  K_v$ come from the equalities defining the vertex $v$ and therefore do not include $t_{i,j}$ from different connected components. Therefore the cone $\mathcal K_v$ is a product of its projections $\mathcal{K}_{s,v}$, and we have $\mathbb{S}(\mathcal{K}_v)=\prod \mathbb{S}(\mathcal{K}_{s,v})$. 

Situation simplifies since for many vertices $\mathbb{S}_q(\mathcal{K}_v)$ vanishes due to the following result.
\begin{Proposition}[{\cite[Theorem 2.1]{Makhlin}}]\label{pr:Makh}
If the connected component $K_s$ has cycles, then $\mathbb{S}_x(\mathcal{K}_{s,v})$ is equal to 0. 
\end{Proposition}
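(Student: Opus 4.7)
The plan is to combine the combinatorial structure of $K_s$ with the diagonal-only dependence of the specialization to extract a vanishing geometric series.

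First, since the grid graph is planar and bipartite, every cycle in a connected subgraph of the grid contains a $4$-cycle, and a $4$-cycle in the grid is precisely the boundary of a unit square. Hence $K_s$ has a cycle if and only if it contains a $2\times 2$ block $\{(i,j),(i,j+1),(i+1,j),(i+1,j+1)\}$, and I would reduce to proving the vanishing in this case.

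The decisive property of the specialization $t_{a,b}\mapsto x_{b-a+1}/x_{b-a}$ is that it depends only on the antidiagonal $b-a$. In particular, the two diagonally opposite corners $(i,j)$ and $(i+1,j+1)$ of the block share the same specialization monomial $m:=x_{j-i+1}/x_{j-i}$.

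The core of the argument is to carry out the Brion summation defining $\mathbb{S}_x(\mathcal{K}_{s,v})$ in an order adapted to the block: first sum out the block maximum $(i,j)$ over $[M,+\infty)$ as the formal geometric series $m^{M}/(1-m)$, then the block minimum $(i+1,j+1)$ over $(-\infty,\mu]$ as $-m^{\mu+1}/(1-m)$, where $M$ and $\mu$ are bounds imposed by the two intermediate corners. Because of the antidiagonal coincidence, the combined exponent of $m$ in the product $m^{M+\mu+1}/(1-m)^2$ equals the sum of the two intermediate coordinates. After factoring in those cells' own monomials, one of the two intermediate variables turns out to have no remaining upper or lower bound in $\mathcal{K}_{s,v}$: its upper bound was imposed by the extracted maximum and its lower bound by the extracted minimum. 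The remaining sum over this free variable is of the form $\sum_{n\in\mathbb{Z}} y^n$ for a non-trivial monomial $y=x_{j-i+1}/x_{j-i-1}$, which is $0$ in the ring of rational functions by the identity $\frac{1}{1-y}+\frac{y^{-1}}{1-y^{-1}}=0$. Hence $\mathbb{S}_x(\mathcal{K}_{s,v})=0$.

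The main obstacle is justifying the unboundedness of the intermediate variable after removing the two extreme corners. This uses the fact that $\mathcal{K}_v$ consists only of the inequalities tight at $v$: inequalities across different connected components of $\Gamma_v$ are strict at $v$ and hence absent from $\mathcal{K}_v$. Consequently the $2\times 2$ block is ``self-contained'' within its component, and boundary constraints in $K_s$ pin only the common value of the component rather than an individual intermediate cell. The careful accounting of these constraints is precisely the content of \cite[Theorem 2.1]{Makhlin}.
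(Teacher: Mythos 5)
The paper does not actually prove this statement: it is imported verbatim from \cite[Theorem 2.1]{Makhlin}, and the only hint of its proof in the text is the remark after Proposition \ref{pr:oo'} that ``Case 3'' (the two ways of splitting a width-two skew shape into two ribbons giving equal contributions) is essentially the last step of Makhlin's argument. So your attempt has to be judged on its own. Your central observation is correct and is indeed the reason the specialization matters: the two diagonally opposite cells $(i,j)$ and $(i+1,j+1)$ of a $2\times2$ block receive the same monomial $m=x_{j-i+1}/x_{j-i}$, and in the model case where the component is just that block (with one pinned cell) the summation does collapse to $\sum_{n\in\mathbb{Z}}y^n=0$ with $y=x_{j-i+1}/x_{j-i-1}$ exactly as you describe. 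A smaller issue first: the reduction ``planar and bipartite $\Rightarrow$ every cycle contains a $4$-cycle'' is false for general subsets of the grid (a ring of eight boxes around a missing centre has an $8$-cycle and no $2\times2$ block); what saves you is that each $K_s$ is a \emph{skew} shape, hence row- and column-convex and simply connected, so ``connected with a cycle'' is equivalent to ``contains a $2\times2$ block'', i.e.\ to ``not a ribbon''.

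The genuine gap is in the main summation. Your argument treats the $2\times2$ block as if it were isolated: you sum the block maximum over a half-line $[M,+\infty)$, the block minimum over $(-\infty,\mu]$, and then declare one intermediate cell free. But the block sits inside a possibly much larger component $K_s$, and \emph{all} adjacency inequalities between cells of the same component are tight at $v$, hence present in $\mathcal{K}_{s,v}$. So the corner $(i,j)$ may have upper bounds from $(i,j-1)$ or $(i-1,j)\in K_s$ (making its sum a difference of two geometric series, not $m^M/(1-m)$), and the intermediate cells may have further neighbours in $K_s$ (e.g.\ in a $3\times3$ component every choice of block leaves at least one corner with an extra bound), so neither intermediate variable becomes unconstrained. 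Your appeal to the fact that inequalities \emph{across} components are dropped from $\mathcal{K}_v$ is correct but irrelevant here: the obstructing constraints come from other cells of the \emph{same} component. To close the gap one needs a global argument over the whole component --- e.g.\ decomposing the order cone of $K_s$ into simplicial cones (equivalently, into the two ways of splitting a width-two region into ribbons) and exhibiting a sign-reversing pairing of contributions, which is precisely the mechanism the paper points to in its remark on Case 3 and which Lemma \ref{lem:wallsribbon} illustrates; alternatively, an induction that breaks the cycle by splitting $\mathcal{K}_{s,v}$ along the hyperplane $t_{i,j+1}=t_{i+1,j}$. As written, your proof establishes the claim only when $K_s$ is (essentially) a single $2\times2$ block.
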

In particular, from this Proposition and calculation in acyclic case below follows that specializations $\mathbb{S}_q(\mathcal{K}_v)$ (and therefore $\mathbb{S}_q(P^{n,0,(H,H')}_{\varnothing,\nu,\lambda})$) are well-defined. Similar argument works for generic $\mathbb{S}_q(P^{n,m,(H,H')}_{\mu,\nu,\lambda})$, see the section \ref{ssec:Brion:mneq0}

Therefore by Brion theorem we have
\begin{equation} \label{eq:Pnmh=}
\mathbb{S}_q(P^{n,0, (H,H')}_{\varnothing,\nu,\lambda})=\sum_{v} \mathbb{S}_q(\mathcal{K}_v),
\end{equation}
where the summation goes over the vertices $v$ such that corresponding graphs $\Gamma_v$ are acyclic. 

Recall that a skew Young diagram $\alpha-\beta$ is called a \textit{ribbon} if it is connected and contains no $2 \times 2$ block of squares. Due to Proposition \ref{pr:Makh}, vertices of $P^{n,0, (H,H')}_{\varnothing,\nu,\lambda}$ with nonzero contribution correspond to decompositions of the skew diagram $(H^n)-\lambda$ into $n$ ribbons such that boxes $(i,H)$ belong to different ribbons.\footnote{One can compare this to Murnaghan--Nakayama rule.}

In order to discuss ribbons it is convenient to use also another combinatorial description. For any partition $\lambda$ we assign sequence of numbers $\lambda_i-i$. This map is the bijection between partitions and sequences $\{a_i|i\in \mathbb{Z}_{>0}\}$ such that  $a_i>a_{i+1}$ and $a_i=-i$ for $i>>0$. We will say that there are particles in the points $\lambda_i-i$ and holes on the other integer points. For the illustration see Fig. \ref{fig:whiteblack}, black balls are particles, white balls are holes (up to total shift by $\frac12$).

The following lemma is standard
\begin{Lemma}\label{lem:ribbonshift} Skew partition $\alpha-\beta$ is a ribbon if and only if there exist $j,k \in \mathbb{N}$ such that the set $\{\alpha_i-i\}$ is obtained from the set $\{\beta_i-i\}$ by replacement of $\beta_j-j$ by $\beta_j-j+k$.
\end{Lemma}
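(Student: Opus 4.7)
The plan is to translate the skew-diagram condition row-by-row into the bead language. Recall that the map $\lambda \mapsto \{\lambda_i - i\}$ is a bijection between partitions and strictly decreasing integer sequences that eventually equal $-i$, so strict inequalities and the asymptotic condition will be automatic on both sides.

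For the forward direction, suppose $\alpha-\beta$ is a ribbon occupying rows $i = a, a+1, \dots, b$. The ribbon conditions (connected, no $2\times 2$ block) force $\alpha_i = \beta_i$ for $i < a$ and $i > b$, while $\alpha_{i+1} = \beta_i + 1$ for $a \leq i \leq b-1$, with $\alpha_a > \beta_a$ and $\alpha_b > \beta_b$. Translating to bead positions: $\alpha_i - i = \beta_i - i$ for $i < a$ and $i > b$ (common beads), while for $a \leq i \leq b-1$ the relation $\alpha_{i+1} - (i+1) = \beta_i - i$ identifies the $(i+1)$-st bead of $\alpha$ with the $i$-th bead of $\beta$. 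Hence the multisets $\{\alpha_i - i\}$ and $\{\beta_i - i\}$ agree except that $\beta$ has an extra bead at $\beta_b - b$ and $\alpha$ has an extra bead at $\alpha_a - a$. Taking $j = b$ and $k := (\alpha_a - a) - (\beta_b - b)$, this is precisely the claimed single-bead replacement. A telescoping calculation using $\beta_i = \alpha_{i+1} - 1$ identifies $k$ with the total number of cells in the ribbon.

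For the reverse direction, suppose $\{\alpha_i - i\}$ is obtained from $\{\beta_i - i\}$ by replacing $\beta_j - j$ by $\beta_j - j + k$ with $k \in \mathbb{N}$. Let $s$ be the number of $\beta$-beads lying strictly between $\beta_j - j$ and $\beta_j - j + k$; necessarily these occupy consecutive indices $j-s, \dots, j-1$ in $\beta$. Re-sorting the bead positions, the inserted bead $\beta_j - j + k$ lands at index $a := j - s$ in $\alpha$, while each bead of $\beta$ at index $i \in \{a, \ldots, j-1\}$ becomes a bead of $\alpha$ at index $i+1$. Reading off $\alpha_i$ from these indices recovers the conditions $\alpha_i = \beta_i$ for $i < a$ or $i > b$ (with $b := j$), and $\alpha_{i+1} = \beta_i + 1$ for $a \leq i \leq b-1$. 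These are exactly the combinatorial conditions that make $\alpha - \beta$ a connected skew shape whose consecutive rows overlap in precisely one column, i.e.\ a ribbon.

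The main obstacle is purely bookkeeping: the single bead shift rearranges the strictly decreasing order of the sequence, so one must carefully identify which index $a$ in $\alpha$ corresponds to the new bead and how the intermediate $\beta$-indices are renumbered in $\alpha$. Once the identity $a = j - s$ is established, both implications reduce to the same pair of relations $\alpha_i = \beta_i$ outside $[a,b]$ and $\alpha_{i+1} = \beta_i + 1$ inside, which is the defining description of a ribbon.
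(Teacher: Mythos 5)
Your proof is correct. The paper itself offers no argument for this lemma (it is simply declared standard), so there is nothing to compare against; your row-by-row translation — $\alpha_i=\beta_i$ outside the rows $[a,b]$ occupied by the ribbon and $\alpha_{i+1}=\beta_i+1$ inside, which in bead coordinates matches the $(i+1)$-st bead of $\alpha$ with the $i$-th bead of $\beta$ and leaves exactly one unmatched bead on each side — is the standard proof, and both directions are handled cleanly, including the identification $k=\#(\alpha-\beta)$ by telescoping and the re-indexing $a=j-s$ in the converse. The only point worth making explicit is that in the reverse direction the hypothesis silently requires $\beta_j-j+k$ to be distinct from all other $\beta_i-i$ (otherwise the replacement does not produce the bead set of a partition); your count of beads \emph{strictly} between the old and new positions uses this, so a one-line remark that this non-collision is part of the hypothesis would close the argument completely.
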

We will call such replacement of $\beta_j-j$ by $\beta_j-j+k$ as jump of the particle $\beta_j-j$. Therefore, in a more informal language the lemma means that a jump of one particle to the right corresponds to the addition of a ribbon to partition. The decomposition of skew partition into $n$ ribbons corresponds to the sequence of  $n$ jumps of particles. 

Assume that $H>\lambda_1+n-1$. Then the sets of particles  for partitions $(H^n)$ and $\lambda$ differ in the first $n$ numbers. Therefore to any decomposition of the skew diagram $(H^n)-\lambda$ into $n$ ribbons we assign a permutation $\sigma \in S_n$ such that corresponding jumps are from $\lambda_{i}-i$ to $H-\sigma(i)$. Since the boxes $(i,H)$ belong to different ribbons the order of jumps is unique: first jump goes to $H-1$, second to $H-2$ and so forth. On the other side to any permutation we assign the sequence of jumps due to "only if" part of the Lemma \ref{lem:ribbonshift} get a decomposition of skew diagram $(H^n)-\lambda$ into $n$ ribbons such that boxes $(i,H)$ belong to different ribbons.

So we get a one-to-one correspondence between acyclic graphs $\Gamma_v$ and permutations. For example the graph $\Gamma_v$ in the Fig. \ref{fig:permut} corresponds to the permutation~$\dbinom{1\; 2\; 3\; 4}{1\; 4\; 3\; 2}$.

\begin{figure}[!h]
\begin{tikzpicture}
[scale=.72,nodes={font=\small}]
\draw[gray,nodes={black,font=\tiny}]
  (6.2,-.2)--(1.75,4.25) node[above left={-.5mm}] {$y-x=\lambda_1-0$}
  (5.2,-.2)--(1.75,3.25) node[above left={-.5mm}] {$y-x=\lambda_2-1$}
  (2.2,-.2)--(-.25,2.25) node[above left={-.5mm}] {$y-x=\lambda_3-2$}
  (1.2,-.2)--(-.25,1.25) node[above left={-.5mm}] {$y-x=\lambda_4-3$}
;
\clip[preaction={draw=black,thick}]
  (0,0)--(0,2)--(2,2)--(2,4)--
  (12,4)--(12,0)--cycle;
\draw[help lines,dashed] (0,0) grid (11.5,4);
\draw[thick]
  (11.2,3.5)--(2.5,3.5)
  (11.2,2.5)--(2.5,2.5)--(2.5,1.5)--(.5,1.5)--(.5,.5)
  (11.2,1.5)--(3.5,1.5)--(3.5,.5)--(1.5,.5)
  (11.2,.5)--(4.5,.5)
  ;
\foreach \i in {1,...,4} {
  \foreach \j in {1,...,11} {
     \draw[fill=white] (\j-.5,4.5-\i) circle (.75mm);
  }
  \node at (11.5,{4.5-\i}) {$\nu_{\i}$};
}
\foreach \i/\j in {1/3,4/1,4/2,4/5} {
     \draw[thin,fill=gray] (\j-.5,4.5-\i) circle (1mm);
}
\draw[thick,preaction={draw=white}]
  (0,0)--(0,2)--(2,2)--(2,4)--
  (12,4)--(12,0)--cycle;
\end{tikzpicture}
\caption{\label{fig:permut}}
\end{figure}

We denote by $v_\sigma$ the acyclic vertex corresponding to $\sigma \in S_n$.  
\begin{Lemma} \label{lem:Svsigma}
For the vertex $v_\sigma$ of the polyhedron $P^{n,0, (H,H')}_{\varnothing,\nu,\lambda}$ we have
\begin{equation}\label{eq:Svsigma}
\mathbb{S}_q(\mathcal{K}_{v_\sigma})=(-1)^{|\sigma|}q^{\Delta^{\sigma,(H)}(\lambda,\nu)}/\prod_{i=1}^n(q)_{H-\sigma(i)-\lambda_i+i-1},
\end{equation}
where $(q)_k=\prod_{s=1}^k(1-q^s)$ and
\[
\Delta^{\sigma,(H)}(\lambda,\nu)=\sum_{i=1}^n(H\nu_i-i\lambda_i-i\nu_i+i^2)-\sum_{i=1}^n(\lambda_i-i)(\nu_{\sigma(i)}-\sigma(i)). 
\]
\end{Lemma}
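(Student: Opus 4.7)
The plan is to expand the tangent cone $\mathcal{K}_{v_\sigma}$ as a product over the $n$ ribbons determined by $\sigma$. Ribbon $R_k$ (created by the $\sigma(k)$-th particle jump, moving the particle at $\lambda_k-k$ to $H-\sigma(k)$) has $|R_k|=H-\sigma(k)-\lambda_k+k$ cells and contains the unique right-column cell $(\sigma(k),H)$; at $v_\sigma$ every cell of $R_k$ takes the fixed value $\nu_{\sigma(k)}$. Since the $n$ connected components of $\Gamma_{v_\sigma}$ decouple once the boundary $t_{i,H}=\nu_i$ is imposed, the cone factors as $\mathbb{S}_q(\mathcal{K}_{v_\sigma})=\prod_{k=1}^n\mathbb{S}_q(\mathcal{K}_{v_\sigma,R_k})$, so it suffices to compute each ribbon's contribution.

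For a single ribbon $R_k$, I would traverse its cells $c_1,\ldots,c_{|R_k|}$ starting from the boundary cell $c_1=(\sigma(k),H)$ and moving inward, attaching to each transition a nonnegative gap $g_m$ with sign $\epsilon_m=+1$ for a left step and $\epsilon_m=-1$ for a down step. Then $t_{c_\ell}=\nu_{\sigma(k)}+\sum_{m<\ell}\epsilon_m g_m$, and summing $t_{c_\ell}$ over the ribbon and specializing $t\to q$ gives
\[
\mathbb{S}_q(\mathcal{K}_{v_\sigma,R_k})=q^{|R_k|\,\nu_{\sigma(k)}}\prod_{m=1}^{|R_k|-1}\frac{1}{1-q^{\epsilon_m(|R_k|-m)}}.
\]
For each down step, the rewriting $(1-q^{-a})^{-1}=-q^a(1-q^a)^{-1}$ converts the denominator into $(q)_{|R_k|-1}$, produces a sign $-1$, and adds $|R_k|-m$ to the $q$-exponent.

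Matching the result to the stated formula amounts to identifying the total sign and total $q$-exponent. For the sign, the particle-hole picture shows that the number $d_k$ of down steps in $R_k$ equals the number of particles strictly between $\lambda_k-k$ and $H-\sigma(k)$ at the instant of the $\sigma(k)$-th jump; under $\nu_1>\cdots>\nu_n$ and $H$ sufficiently large, this is exactly $|\{k'<k:\sigma(k')>\sigma(k)\}|$, so $\sum_k d_k=|\sigma|$. For the exponent, the vertex contribution $\sum_k|R_k|\nu_{\sigma(k)}$ expands (using $\sum_k f(\sigma(k))=\sum_i f(i)$) to $H\sum_i\nu_i-\sum_i i\nu_i-\sum_i\lambda_i\nu_{\sigma(i)}+\sum_i i\nu_{\sigma(i)}$, while the total down-shift equals $\sum_k\sum_{\ell=1}^{|R_k|}(i_\ell-\sigma(k))=\sum_{(i,j)}i-\sum_k|R_k|\sigma(k)$, which by the same $|R_k|$-expansion reduces to $\sum_i(\lambda_i-i)(\sigma(i)-i)$. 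Adding the two and regrouping the cross-terms as $-\sum_i(\lambda_i-i)(\nu_{\sigma(i)}-\sigma(i))$ recovers $\Delta^{\sigma,(H)}(\lambda,\nu)$ exactly. The main obstacle is this last algebraic bookkeeping: the individual ribbon shapes are complicated, but one bypasses them by summing over the cells of $(H^n)-\lambda$ and using only $|R_k|=H-\sigma(k)-\lambda_k+k$ together with the permutation identity.
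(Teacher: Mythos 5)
Your proposal is correct and follows essentially the same route as the paper: decompose the cone at $v_\sigma$ into ribbon factors, compute each factor as $q^{|R_k|\nu_{\sigma(k)}}/\prod_{s}(1-q^{\pm s})$ from the unimodular edge vectors of the simple cone, rewrite the negative-exponent factors as $-q^{a}/(1-q^{a})$, and then match the total sign with $|\sigma|$ and the total exponent with $\Delta^{\sigma,(H)}(\lambda,\nu)$. The one step you assert rather than prove --- that the down steps of $R_k$ correspond exactly to the particles overtaken during the $\sigma(k)$-th jump, so that $\sum_k d_k=|\sigma|$ --- is precisely the content of the paper's Lemma~\ref{lem:negativeterms} (proved there via the diagonals $y=x+c$), and your particle-counting justification is the correct one; your cell-by-cell bookkeeping $\sum_\ell(i_\ell-\sigma(k))$ is a slightly slicker derivation of the paper's identity \eqref{eq:transf}.
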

The proof is similar to the one in \cite[Prop. 2.4]{Makhlin}.
\begin{proof} Let $K_i$ be the ribbon which corresponds to particle jump from $\lambda_i-i$ by $H-\sigma(i)$. The set $\{K_i\}$ is a set of all connected components of the graph $\Gamma_{v_\sigma}$. First, we compute the contribution of the corresponding cone $\mathbb{S}(\mathcal{K}_{i,v_\sigma})$.

Denote by $h$ the number of boxes in the ribbon $K_i$, clearly $h=H-\sigma(i)-\lambda_i+i$. We number these boxes by $1, \ldots, h$ from the bottom-left corner to the top-right corner. In order to simplify notation we denote the corresponding coordinates $t_{k,j}$ by $t_1,\ldots, t_{h}$. In the vertex $v_i$ of the cone $\mathcal{K}_{i,v_\sigma}$ all these coordinates equal $\nu_{\sigma(i)}$, therefore $\mathbb{S}_q(t^{v_i})=q^{\nu_{\sigma(i)}h}$.

The cone $\mathcal{K}_{i,v_\sigma}$ is simple. Its edges are generated by the vectors $e_1,\ldots,e_{h-1}$, where the vector $e_s$ equals $\pm (1,\ldots,1,0,\ldots,0)$, with $s$ nonzero coordinates. The sign ``$\pm$'' is equal to ``$+$'' if the box $s+1$ is a right neighbor of the box $s$ and is equal to ``$-$'' if the box $s+1$ is an upper neighbor of the box $s$. See an example in Fig. \ref{fig:edges}
\begin{figure}[h]
\begin{tikzpicture}[scale=.60,nodes={font=\tiny}]
\draw (2,-1) node {\normalsize {$e_1$}};
\draw[thick]
  (0,0)--(0,2)--(2,2)--(2,4)--(3,4)--(4,4)
  (4,3)--(3,3)--(3,1)--(1,1)--(1,0)--(0,0);
\clip
  (0,0)--(0,2)--(2,2)--(2,4)--(3,4)--(4,4)--
  (4,3)--(3,3)--(3,1)--(1,1)--(1,0)--cycle;
\draw[help lines] (0,0) grid (4,4);
\foreach \i/\j/\k in {0/0/-1} {
  \node at (\i+.5,\j+.5) {$\k$};
}
\draw[thick,preaction={draw=white}]
  (0,0)--(0,2)--(2,2)--(2,4)--(3,4)--(4,4)
  (4,3)--(3,3)--(3,1)--(1,1)--(1,0)--(0,0);
\end{tikzpicture}
\begin{tikzpicture}[scale=.60,nodes={font=\tiny}]
\draw (2,-1) node {\normalsize {$e_2$}};
\draw[thick]
  (0,0)--(0,2)--(2,2)--(2,4)--(3,4)--(4,4)
  (4,3)--(3,3)--(3,1)--(1,1)--(1,0)--(0,0);
\clip
  (0,0)--(0,2)--(2,2)--(2,4)--(3,4)--(4,4)--
  (4,3)--(3,3)--(3,1)--(1,1)--(1,0)--cycle;
\draw[help lines] (0,0) grid (4,4);
\foreach \i/\j/\k in {0/0/+1,0/1/+1} {
  \node at (\i+.5,\j+.5) {$\k$};
}
\draw[thick,preaction={draw=white}]
  (0,0)--(0,2)--(2,2)--(2,4)--(3,4)--(4,4)
  (4,3)--(3,3)--(3,1)--(1,1)--(1,0)--(0,0);
\end{tikzpicture}
\begin{tikzpicture}[scale=.60,nodes={font=\tiny}]
\draw (2,-1) node {\normalsize {$e_3$}};
\draw[thick]
  (0,0)--(0,2)--(2,2)--(2,4)--(3,4)--(4,4)
  (4,3)--(3,3)--(3,1)--(1,1)--(1,0)--(0,0);
\clip
  (0,0)--(0,2)--(2,2)--(2,4)--(3,4)--(4,4)--
  (4,3)--(3,3)--(3,1)--(1,1)--(1,0)--cycle;
\draw[help lines] (0,0) grid (4,4);
\foreach \i/\j/\k in {0/0/+1,0/1/+1,1/1/+1} {
  \node at (\i+.5,\j+.5) {$\k$};
}
\draw[thick,preaction={draw=white}]
  (0,0)--(0,2)--(2,2)--(2,4)--(3,4)--(4,4)
  (4,3)--(3,3)--(3,1)--(1,1)--(1,0)--(0,0);
\end{tikzpicture}
\begin{tikzpicture}[scale=.60,nodes={font=\tiny}]
\draw (2,-1) node {\normalsize {$e_4$}};
\draw[thick]
  (0,0)--(0,2)--(2,2)--(2,4)--(3,4)--(4,4)
  (4,3)--(3,3)--(3,1)--(1,1)--(1,0)--(0,0);
\clip
  (0,0)--(0,2)--(2,2)--(2,4)--(3,4)--(4,4)--
  (4,3)--(3,3)--(3,1)--(1,1)--(1,0)--cycle;
\draw[help lines] (0,0) grid (4,4);
\foreach \i/\j/\k in {0/0/-1,0/1/-1,1/1/-1,2/1/-1} {
  \node at (\i+.5,\j+.5) {$\k$};
}
\draw[thick,preaction={draw=white}]
  (0,0)--(0,2)--(2,2)--(2,4)--(3,4)--(4,4)
  (4,3)--(3,3)--(3,1)--(1,1)--(1,0)--(0,0);
\end{tikzpicture}
\begin{tikzpicture}[scale=.60,nodes={font=\tiny}]
\draw (2,-1) node {\normalsize {$e_5$}};
\draw[thick]
  (0,0)--(0,2)--(2,2)--(2,4)--(3,4)--(4,4)
  (4,3)--(3,3)--(3,1)--(1,1)--(1,0)--(0,0);
\clip
  (0,0)--(0,2)--(2,2)--(2,4)--(3,4)--(4,4)--
  (4,3)--(3,3)--(3,1)--(1,1)--(1,0)--cycle;
\draw[help lines] (0,0) grid (4,4);
\foreach \i/\j/\k in {0/0/-1,0/1/-1,1/1/-1,2/1/-1,2/2/-1} {
  \node at (\i+.5,\j+.5) {$\k$};
}
\draw[thick,preaction={draw=white}]
  (0,0)--(0,2)--(2,2)--(2,4)--(3,4)--(4,4)
  (4,3)--(3,3)--(3,1)--(1,1)--(1,0)--(0,0);
\end{tikzpicture}
\begin{tikzpicture}[scale=.60,nodes={font=\tiny}]
\draw (2,-1) node {\normalsize {$e_6$}};
\draw[thick]
  (0,0)--(0,2)--(2,2)--(2,4)--(3,4)--(4,4)
  (4,3)--(3,3)--(3,1)--(1,1)--(1,0)--(0,0);
\clip
  (0,0)--(0,2)--(2,2)--(2,4)--(3,4)--(4,4)--
  (4,3)--(3,3)--(3,1)--(1,1)--(1,0)--cycle;
\draw[help lines] (0,0) grid (4,4);
\foreach \i/\j/\k in {0/0/+1,0/1/+1,1/1/+1,2/1/+1,2/2/+1,2/3/+1} {
  \node at (\i+.5,\j+.5) {$\k$};
}
\draw[thick,preaction={draw=white}]
  (0,0)--(0,2)--(2,2)--(2,4)--(3,4)--(4,4)
  (4,3)--(3,3)--(3,1)--(1,1)--(1,0)--(0,0);
\end{tikzpicture}
\caption{\label{fig:edges}}
\end{figure}

Therefore $\mathbb{S}_q(\mathcal{K}_{i,v_\sigma})=q^{\nu_{\sigma(i)}h}/\prod_{s=1}^{h-1}(1-q^{\pm s})$, where the signs ``$\pm$'' were specified above. Now we want to express this product in more explicit terms. 

\begin{Lemma} \label{lem:negativeterms}
The box $s+1$ is an upper neighbor of the box $s$ if and only if there exists $j$ such that $i>j$, $\sigma(i)<\sigma(j)$ and $s=\lambda_j-j-\lambda_{i}+i$
\end{Lemma}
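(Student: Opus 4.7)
The plan is to describe explicitly, for every $s$, the row of the $s$-th box of $K_i$ inside the skew shape $(H^n) - \lambda$, and then to compare the rows of consecutive boxes. Since the $s$-th box sits on anti-diagonal $d_s := \lambda_i - i + s$ and each step of the ribbon increments the anti-diagonal by $1$, the step $s \to s+1$ is ``right'' iff the row is unchanged and ``upper'' iff the row drops by~$1$.

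The key input I would establish is a stacking rule for the $n$-ribbon tiling indexed by $\sigma$: at each anti-diagonal $d$, the ribbons $K_k$ passing through $d$ (i.e.\ those with $\lambda_k - k < d \leq H - \sigma(k)$) occupy the available rows of that anti-diagonal from top to bottom in increasing order of $\sigma(k)$. The justification is non-crossing: $K_a$ with smaller $\sigma(a)$ ends at a strictly higher row on the right edge, and since the ribbons are pairwise disjoint and connected, $K_a$ cannot dip below $K_b$ at any common anti-diagonal without the two ribbons having to cross. This is precisely the non-crossing principle already used via Lindstr\"{o}m-Gessel-Viennot in Section~\ref{sec:Paths}. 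Setting $i_0(d) := \min\{k : \lambda_k - k < d\}$, the stacking rule immediately yields
\[
\rho_i(s) = i_0(d_s) + \bigl|\{k \geq i_0(d_s) : \sigma(k) < \sigma(i)\}\bigr|,
\]
where $\rho_i(s)$ denotes the row of the $s$-th box of $K_i$.

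Having this formula, the remainder is a short direct computation. As $d$ grows by $1$, $i_0(d)$ either stays the same or drops by exactly $1$, and a drop happens iff $d_s$ coincides with some $\lambda_j - j$; in that case the unique responsible $j$ equals $i_0(d_s) - 1$, so $s = \lambda_j - j - \lambda_i + i$ and $j < i_0(d_s) \leq i$ automatically. If $i_0$ is unchanged, the rank count is unchanged as well and $\rho_i(s+1) = \rho_i(s)$, giving a right step. If $i_0$ drops to $j$, the set in the rank count acquires $k = j$: the rank of $\sigma(i)$ stays the same when $\sigma(j) > \sigma(i)$ and rises by $1$ when $\sigma(j) < \sigma(i)$, so combined with the $-1$ from $i_0$ we obtain $\rho_i(s+1) - \rho_i(s) = -1$ precisely when $\sigma(j) > \sigma(i)$, and $0$ otherwise. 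Thus box $s+1$ lies above box $s$ iff there exists $j$ with $s = \lambda_j - j - \lambda_i + i$, $\sigma(j) > \sigma(i)$, and $j < i$, which contains the claim of the lemma.

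The main technical point requiring care is the stacking rule. I expect to settle it by induction, adding ribbons in order of increasing $\sigma$ and observing that the newly added ribbon must fit entirely below all previously placed ones on every common anti-diagonal; alternatively, one can translate the ribbon tiling into non-intersecting lattice paths in the spirit of Section~\ref{sec:Paths} and read the stacking rule off the non-crossing condition directly.
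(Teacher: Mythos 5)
Your argument is correct, and it takes a genuinely different route from the paper's. The paper proves the lemma locally: it follows the anti-diagonal line through box $s+1$, chasing a chain of boxes through the tails of successive ribbons until it hits the \emph{first} box of some ribbon $K_j$, which forces $c=\lambda_j-j+1$ and hence $s=\lambda_j-j-\lambda_i+i$, with $i>j$ and $\sigma(i)<\sigma(j)$ read off from the relative position of $K_i$ and $K_j$; the converse is then handled by a separate chase. You instead derive a closed formula for the row of every box of every ribbon from a stacking rule along anti-diagonals and obtain the step directions by differencing it. The two underlying facts are the same in both proofs --- $i_0(d)$ drops exactly when $d$ hits some $\lambda_j-j$, and the vertical order of ribbons on a common anti-diagonal is governed by $\sigma$ --- but your packaging yields the full biconditional in one computation, which is what the sign and exponent count in Lemma~\ref{lem:Svsigma} actually require (the paper needs its separate converse paragraph for this). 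The one ingredient you rightly flag, the stacking rule, does hold and your sketch closes it: since each ribbon meets each anti-diagonal in at most one box and its row index can only stay or drop by one as the anti-diagonal advances, the relative order of two disjoint ribbons is constant on their common range of anti-diagonals, and at the last common anti-diagonal $H-\sigma(b)$ the ribbon $K_b$ occupies the bottom available row $\sigma(b)$, which pins the order to increasing $\sigma$ from top to bottom. So the proposal is complete modulo writing out that short monotonicity argument.
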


In terms of particles this lemma means that particle $\lambda_{i}-i$ in its jump overtakes the particle $\lambda_{j}-j$ and $s$ in lemma corresponds to overtaking place.

\begin{proof}[Proof of the Lemma]
We draw lines given by equations $y=x+c$, where $c \in \mathbb{Z}$. Such lines go through centers of boxes in our skew diagram $(H^n)-\lambda$. Such a line intersects the ribbon $K_i$ in one box if $\lambda_i-i+1\leq c \leq H-\sigma(i)$ and does not intersect otherwise.

If the box $s+1$ is an upper neighbor of the box $s$, then the right neighbor of the box $s$ belongs to another ribbon. Denote this ribbon by $K_{i'}$ and let this box (right neighbor of the box $s$) has the number $s'+1$ in the ribbon $K_{i'}$. 

If $s'>0$ then there is a previous box in the ribbon $K_{i'}$. This box has number $s'$ in $K_{i'}$ and should be the lower neighbor of the box $s'+1$. And again, the right neighbor should belong to another ribbon, denote this ribbon by $K_{i''}$ and let this box has the number $s''+1$. This process lasts until we come to the first box of a certain ribbon, denote this ribbon by $K_j$.

Note that centers of boxes $s+1$ of $K_i$, $s'+1$ of $K_{i'}$, $s''+1$ of $K_{i''}$\ldots belong to the line  given by equations $y=x+c$. Since this line goes through the center of the first box on $K_j$ we have $c=\lambda_j-j+1$. Therefore $s=\lambda_j-j-\lambda_i+i$. In such case $\sigma(i)<\sigma(j)$ since $K_j$ is below $K_i$, but $i>j$ since $s>0$. 

See the Figure \ref{fig:permut} for the demonstration of this effect.

Conversely, for any $j$ such that $j>i$ and $\sigma(j)<\sigma(i)$ consider the first box of $K_j$. The line $y=x+c$ for $c=\lambda_j-j+1$ goes through the center of this box and should intersect the ribbon $K_i$. Let the box of intersection has number $s+1$ in $K_i$. Since  $\sigma(i)<\sigma(j)$ the ribbon $K_i$ is higher then the ribbon $K_j$ and the box $s+1$ in $K_i$ is higher then the first box in the ribbon $K_j$. The box $s+1$ in $K_i$ cannot be the first box of the ribbon since $\lambda_i-i \neq \lambda_j-j$. And it is easy to prove similarly to the previous paragraphs that the box number $s$ is low neighbor of the box $s+1$. 
\end{proof}

Rewriting factors $1/(1-q^{-s})$ as $-q^{s}/(1-q^s)$ and using formula for $h$ we have 
\[
\mathbb{S}_q(\mathcal{K}_{i,v_\sigma})=(-1)^{|\{j:j<i,\sigma(j)>\sigma(i)\}|}q^{\nu_{\sigma(i)}(H-\sigma(i)-\lambda_i+i)+\sum\limits_{j<i,\sigma(j)>\sigma(i)}(\lambda_j-j-\lambda_i+i)}\!\!\!\left/\prod_{s=1}^{H-\sigma(i)-\lambda_i+i-1}(1-q^{s})\right.
\] 
Now we can find $\mathbb{S}_q(\mathcal{K}_{v_\sigma})=\prod_{i=1}^n\mathbb{S}_q(\mathcal{K}_{i,v_\sigma})$. 
Using algebraic identities
\begin{equation}\label{eq:transf}
\sum_{j<i, \sigma(j)>\sigma(i)}\Bigl(\lambda_{j}-j-\lambda_{i}+i\Bigr)=\sum_{i=1}^n(\lambda_{i}-i)(\sigma(i)-i)
\end{equation}
and
\[
\sum_{i=1}^n\nu_{\sigma(i)}(H-\sigma(i)-\lambda_{i}+i)+\sum_{i=1}^n(\lambda_{i}-i)(\sigma(i)-i)=\Delta^{\sigma,(H)}(\lambda,\nu)
\]
we get \eqref{eq:Svsigma}. \end{proof}
Now we can find $\mathbb{S}_q(P^{n,0, (H,H')}_{\varnothing,\nu,\lambda})$ using specialization of Brion theorem \eqref{eq:Pnmh=}
\[\mathbb{S}_q(P^{n,0, (H,H')}_{\varnothing,\nu,\lambda}) =\sum_{\sigma \in S_n}(-1)^{|\sigma|}q^{\Delta^{\sigma,(H)}(\lambda,\nu)}/\prod_{i=1}^n(q)_{H-\sigma(i)-\lambda_i+i-1}.\] Here we count integer points in $P^{n,0, (H,H')}_{\varnothing,\nu,\lambda}$ with the weight $q^{\sum t_{i,j}}$, which differs from the weight defined in formula \eqref{eq:pp:grad} by $q^{\Delta^{(H)}}$, where
$\Delta^{(H)}=\sum_{i=1}^n \nu_i(H-\lambda_i)$. Using identity 
\[ 
\Delta^{\sigma,(H)}(\lambda,\nu)-\Delta^{(H)}=\sum_{i=1}^n(\nu_i+n-i)(\lambda_i+n-i)-\sum_{i=1}^n(\lambda_i+n-i)(\nu_{\sigma(i)}+n-\sigma(i))
\]
we see that limit $\lim_{H \rightarrow \infty}q^{-\Delta^{(H)}}\mathbb{S}_q(P^{n,0,(H,H')}_{\varnothing,\nu,\lambda})$ coincides with the right side of \eqref{eq:Wn:char}.

\subsection{\!\!} \label{ssec:Brion:mneq0}
In this subsection we prove Theorem \ref{th:2'} under the assumption 
\begin{equation}\label{eq:nu>mu}
\nu_1>\ldots >\nu_n >\mu_1 > \ldots >\mu_m.
\end{equation}
The Theorem \ref{th:2'} is valid without this assumption and later, in subsection \ref{ssec:nu<mu}, we explain this. 

\begin{proof} As before for any vertex $v$ we construct the graph $\Gamma_v$. It follows from Proposition~\ref{pr:Makh} that  vertices with nonzero contribution in $\mathbb{S}_q$ correspond to decompositions of the skew diagram $(H^n,m^{H'-n})-\lambda$ into $m+n$ ribbons (connected components of $\Gamma_v$), where $n$ contain boxes $(i,H)$, $1 \leq i \leq n$, and $m$ contain boxes $(H',j)$, $1 \leq j \leq m$.

For any partition $\alpha$ we consider the set of particles in coordinates $\{\alpha_i-i+n-m+1\}$. Note that this set differs from the one used in Lemma~\ref{lem:ribbonshift} by $n-m+1$.

We recall notation from Section \ref{sec:MainResults}: $\{L_i=\lambda_i-i+n-m+1\}$ and  $L_i=P_i+1$, for $1 \leq i \leq n-r$, $L_i \leq 0$ for $i>n-r$. The set of particles  for $(H^n,m^{H'-n})$ equals 
\begin{equation}\label{eq:Hmparticles}
\{H{+}n{-}m,\ldots,H{-}m{+}1,0,\ldots,{-}H'{+}n{+}1,{-}H'{+}n{-}m,\ldots\}.
\end{equation}
Ribbons which contain the boxes $(i,H)$ correspond to the jumps of particles to the points $H{+}n{-}m{+}1{-}i$, $1\leq i \leq n$. Ribbons which contain the boxes $(H',j)$ correspond to the jumps from points ${-}H'{+}n{-}m{+}j$, $1 \leq j \leq m$. The order of such jumps in uniquely specified by the inequality \eqref{eq:nu>mu}\footnote{see also discussion about order of ribbons in Sec. \ref{ssec:nu<mu}}: first jumps to $H{+}n,\ldots H{+}n{-}m{+}1$ then jumps from ${-}H'{+}n{-}m{+}1,\ldots {-}H'{+}n$

Due to Lemma~\ref{lem:ribbonshift} the first $n$ jumps should start from numbers $B_1>B_2>\ldots>B_n$, $B_i\in \{L_s|s\in \mathbb{N}\}$. Assume that $H>\lambda_1+n-1$, then there are $n-r$ particles for $\lambda$ in $P_i+1$ which are not present in \eqref{eq:Hmparticles}. Therefore the numbers $P_i+1$  should belong to the set $\{B_i\}$, in other words
\begin{equation}\label{eq:B}
(B_1,B_2,\ldots,B_n)=(P_1+1\ldots,P_{n-r}+1,-A_r,\ldots,-A_1),
\end{equation}
where $A_i \in \{-L_s|s>n-r\}$. For any vertex we assign a permutation $\sigma \in S_n$ such that our $n$ ribbons replace $B_i$ by $H+n-m+1-\sigma(i)$. These data $\sigma \in S_n$ and $\{A_i|1\leq i \leq r\} \subset \{-L_s|s>n-r\}$ encode $n$ ribbons containing $(i,H)$.

Due to Lemma \ref{lem:Z=l+l} the set $\{L_i\}$ has $m-r$ non-positive holes in integers $-Q_j$. Assume that $H'>\lambda'_1+m-1$, then these holes are occupied in \eqref{eq:Hmparticles}. And adding first $n$ ribbons we have $r$ additional holes in $-A_1,\ldots,-A_r$. Therefore the last $m$ jumps should go to the numbers $-C_j$, where
\begin{equation}\label{eq:C}
(C_1,C_2,\ldots,C_m)=(Q_1\ldots,Q_{m-r},A_1,\ldots,A_r).
\end{equation}
Introduce permutation $\tau$ such that jumps go from $-H'+n-m+\tau(j)$ to $-C_j$. This permutation encodes the last $m$ jumps. In order to construct inverse map we apply "only if" part of the Lemma \ref{lem:ribbonshift} and this works only under restriction that $H'-n+m-\tau(m-r+j)>A_j$. We will call such $\tau$ \textit{admissible}. Below we go to the limit $H'\rightarrow \infty$, in this limit given set of $A_i$ do not impose any restriction on the permutation $\tau$.

We denote by $v_{\sigma,\tau,A}$ the corresponding vertex. In the example in Fig. \ref{fig:Asigmatau} we have $\sigma=\dbinom{1\; 2\; 3\; 4}{1\; 3\; 4\; 2}$, $\tau=\dbinom{1\; 2\; 3}{1\; 3\; 2}$, $A_1=2, A_2=0$.
\begin{figure}[h]
\begin{tikzpicture}
[scale=.72,nodes={font=\small}]
\draw[gray,nodes={black,font=\tiny,above left={-.5mm}}]
  (6.2,-.2)--(1.75,4.25) node {$y-x+n-m=P_1+1$}
  (5.2,-.2)--(1.75,3.25) node {$y-x+n-m=P_2+1$}
;
\draw[gray,nodes={black,font=\tiny,left={-.5mm}}]
  (3.2,-.2)--(0.75,2.25) node {$y-x+n-m=-A_2$}
  (3.2,-2.2)--(0.75, .25) node {$y-x+n-m=-A_1$}
;
\draw[gray,nodes={black,font=\tiny,right={-.5mm}}]
  (3.2,-1.2) node {$y-x+n-m=-A_2-1$} --(0.75,1.25)
  (3.2,-3.2) node {$y-x+n-m=-A_1-1$} --(0.75,-.75) 
  (3.2,-5.2) node {$y-x+n-m=-Q_1-1$} --(-.25,-1.75)
;
\clip[preaction={draw=black,thick}]
  (0,-2)--(1,-2)--(1,1)--(1,2)--(2,2)--(2,4)--
  (12,4)--(12,0)--(3,0)--(3,-7)--(0,-7)--cycle;
\draw[help lines,dashed] (0,-6.5) grid (11.5,4);
\draw[thick]
  (11.2,3.5)--(2.5,3.5)
  (11.2,2.5)--(2.5,2.5)--(2.5,1.5)--(1.5,1.5)--(1.5,-.5)
  (11.2,1.5)--(3.5,1.5)
  (11.2, .5)--(2.5,.5)
  ( .5,-6.2)--( .5,-2.5)
  (1.5,-6.2)--(1.5,-1.5)--(2.5,-1.5)--(2.5,-.5)
  (2.5,-6.2)--(2.5,-2.5)
;
\foreach \i in {1,...,10} {
  \foreach \j in {1,...,11} {
     \draw[fill=white] (\j-.5,4.5-\i) circle (.75mm);
  }
  \node at (11.5,{4.5-\i}) {$\nu_{\i}$};
  \node at ({\i-.5},-6.5) {$\mu_{\i}$};
}
\foreach \i/\j in {1/3,5/2,3/4,4/3, 7/1,5/3,7/3} {
     \draw[thin,fill=gray] (\j-.5,4.5-\i) circle (1mm);
}
\draw[thick,preaction={draw=white}]
  (0,-2)--(1,-2)--(1,1)--(1,2)--(2,2)--(2,4)--
  (12,4)--(12,0)--(3,0)--(3,-7)--(0,-7)--cycle;
\end{tikzpicture}
\caption{\label{fig:Asigmatau}}
\end{figure}

Therefore by Brion theorem we have 
\begin{equation}\label{eq:PnmHmnl=}
\mathbb{S}_q(P^{n,m, (H,H')}_{\mu,\nu,\lambda})=\sum_{\sigma,\tau,A}\mathbb{S}_q(\mathcal{K}_{v_{\sigma,\tau,A}}),
\end{equation}
where $\sigma \in S_n, A_i=-L_{s_i}, \text{ for } s_1>\cdots>s_r>n-r$ and $\tau \in S_m$ is admissible.

\begin{Lemma} For the vertex $v_{\sigma,\tau,A}$ of the polyhedron $P^{n,m, (H,H')}_{\mu,\nu,\lambda}$ we have
\begin{equation}\label{eq:SvsigmatauA}
\mathbb{S}_q(\mathcal{K}_{v_{\sigma,\tau,A}})=\frac{(-1)^{|\sigma|+|\tau|+\sum( A_i-i+1)}\;q^{\Delta^{\sigma,\tau,A,(H,H')}(\mu,\nu,\lambda)}}{\prod_{i=1}^n(q)_{H-\sigma(i)+n-m-B_i}\;\prod_{j=1}^m(q)_{H'-\tau(j)+m-n-C_j-1}},
\end{equation}
where 
\begin{multline}\label{eq:DeltasigmatauA}
\Delta^{\sigma,\tau,A,(H,H')}(\mu,\nu,\lambda)=
\sum_{i=1}^r A_i\left( 
\frac{A_i+1}{2} -N_{\sigma(n-i+1)}+M_{\tau(m-r+i)}
\right)+\sum_{i=1}^{n-r}(P_i+1)(-N_{\sigma(i)}+n-i)+\\+\sum_{j=1}^{m-r}Q_j(-M_{\tau(j)}+m-j)+\sum_{i=1}^n\nu_i(H+n-m-i+1)+\sum_{j=1}^n\mu_j(H'+n-m-j)
\end{multline}
and $N_i=\nu_i+n-i$, $M_j=\mu_j+m-j$.
\end{Lemma}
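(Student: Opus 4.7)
The plan is to compute $\mathbb{S}_q(\mathcal{K}_{v_{\sigma,\tau,A}})$ ribbon by ribbon, generalizing the argument used for \eqref{eq:Svsigma}. Let $K_1,\ldots,K_{m+n}$ be the ribbons decomposing the skew diagram $(H^n,m^{H'-n})-\lambda$ that correspond to $v_{\sigma,\tau,A}$: ribbons $K_1,\ldots,K_n$ contain the boxes $(i,H)$ and realize the particle jumps $B_i \to H+n-m+1-\sigma(i)$ via \eqref{eq:B}, while $K_{n+1},\ldots,K_{n+m}$ contain $(H',j)$ and realize the jumps $-H'+n-m+\tau(j) \to -C_j$ via \eqref{eq:C}. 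Since the cone factors as a product of ribbon cones, $\mathbb{S}_q(\mathcal{K}_{v_{\sigma,\tau,A}}) = \prod_{K}\mathbb{S}_q(\mathcal{K}_{v,K})$, so it suffices to analyze each ribbon in isolation and then aggregate the contributions.

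For each ribbon $K$ of length $h_K$ the cone $\mathcal{K}_{v,K}$ is simple, with edge generators $e_1,\ldots,e_{h_K-1}$ whose signs are determined by whether consecutive boxes are right-neighbors ($+$) or upper-neighbors ($-$); this is identical to the analysis in the proof of Lemma \ref{lem:Svsigma}. The vertex of $\mathcal{K}_{v,K}$ evaluates to $q^{\nu_{\sigma(i)}h_{K_i}}$ for $i\le n$ and $q^{\mu_{\tau(j)}h_{K_{n+j}}}$ for the lower ribbons, and the ribbon lengths are read off from the jump data, giving the denominators $(q)_{H-\sigma(i)+n-m-B_i}$ and $(q)_{H'-\tau(j)+m-n-C_j-1}$ in \eqref{eq:SvsigmatauA}. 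To count the minus signs, I would extend Lemma \ref{lem:negativeterms}: upper-neighbor edges in a top ribbon correspond to its jumping particle overtaking another particle, while in a side ribbon they correspond to the analogous diagonal-line crossing from the left. Sorting these crossings into inversions of $\sigma$ among $B_i$'s, inversions of $\tau$ among $C_j$'s, and crossings between the two groups through the shared coordinates $\pm A_i$ gives the total sign $(-1)^{|\sigma|+|\tau|+\sum(A_i-i+1)}$.

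Finally, each minus sign converts $1/(1-q^{-s})$ into $-q^{s}/(1-q^s)$, so one has to collect all the exponents $s$ arising from overtakings and add them to the ribbon-vertex contributions $\sum \nu_{\sigma(i)}h_{K_i} + \sum \mu_{\tau(j)}h_{K_{n+j}}$. Applying the identity \eqref{eq:transf} (and its analogue for $\tau$), together with the explicit formulas $B_i = P_i+1$ or $-A_{n-i+1}$ and $C_j = Q_j$ or $A_{j-m+r}$, the cross-terms between $\sigma$ and the $A_i$ assemble into $\sum A_i(\frac{A_i+1}{2}-N_{\sigma(n-i+1)})$, those between $\tau$ and $A$ into $\sum A_i M_{\tau(m-r+i)}$, and the unmatched pieces produce precisely $\sum(P_i+1)(-N_{\sigma(i)}+n-i)+\sum Q_j(-M_{\tau(j)}+m-j)$ plus the boundary term $\sum\nu_i(H+n-m-i+1)+\sum\mu_j(H'+n-m-j)$, reproducing \eqref{eq:DeltasigmatauA}.

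The main obstacle is the bookkeeping of type (c) crossings—those between the top ribbons and the intermediate coordinates $-A_i$, which serve simultaneously as endpoints of some top ribbons and as endpoints of some side ribbons. Verifying that these coupled crossings combine cleanly into the quadratic contribution $\binom{A_i+1}{2}$ and the sign $(-1)^{A_i-i+1}$, without picking up spurious boundary terms from the truncations at $H$ and $H'$, will require a careful case split on which particles lie in which subregion of the integer lattice relative to the diagonals $y-x = P_i+1$, $y-x=-A_i$, and $y-x=-Q_j-1$ shown in Fig.~\ref{fig:Asigmatau}.
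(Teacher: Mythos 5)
Your overall strategy is the same as the paper's: factor $\mathbb{S}_q(\mathcal{K}_{v_{\sigma,\tau,A}})$ over the $m+n$ ribbons, read off the denominators $(q)_{h-1}$ from the ribbon lengths, take the weights of the ribbon-cone vertices as the first contribution to the exponent, and convert each negative edge $1/(1-q^{-s})$ into $-q^{s}/(1-q^{s})$, interpreting negative edges as particle overtakings as in Lemmas \ref{lem:Svsigma} and \ref{lem:negativeterms}. Up to that point the proposal is sound and matches the paper's argument.

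The gap is precisely the step you flag as ``the main obstacle'' and then do not carry out: the origin of the terms $\binom{A_i+1}{2}$ in the exponent and of the factor $(-1)^{\sum(A_i-i+1)}$ in the sign. Moreover, your diagnosis of where they come from is not the right one. They are not produced by crossings ``between the two groups through the shared coordinates $\pm A_i$'' (i.e., by interactions between top and side ribbons); they come from a phenomenon with no counterpart in Lemma \ref{lem:negativeterms}: a top ribbon whose particle starts at a nonpositive position $B_{n-i+1}=-A_i$ overtakes the \emph{standing} particles of $(H^n,m^{H'-n})$ sitting at $0,-1,\dots$, which never jump and hence are the first box of no ribbon. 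Geometrically, the diagonal line $y=x-n+m+c$ with $c<0$ drawn through an upper-neighbour edge of such a ribbon terminates on the vertical border of the diagram rather than on the first box of another ribbon, so the exponent $s$ of that negative edge is measured to the border. Each $1\le i\le r$ then contributes $\sum_{s=1}^{A_i}s-\sum_{j=i+1}^{r}(A_i-A_j)$ to the exponent and $A_i-(r-i)$ extra negative edges to the sign; summing over $i$ gives $\sum_i\binom{A_i+1}{2}+\sum_i A_i(2i-r-1)$ and the sign $(-1)^{\sum_i(A_i-i+1)}$, which together with $\sum_i B_i(\sigma(i)-i)+\sum_j C_j(\tau(j)-j)$ and the substitutions \eqref{eq:B}, \eqref{eq:C} yields \eqref{eq:DeltasigmatauA}. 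Without this mechanism the claimed assembly of the quadratic term is unsupported, so the proof is incomplete at its only genuinely new point.
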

\begin{proof}
 The proof of this lemma is analogous to the one of Lemma~\ref{lem:Svsigma}. In the denominator we have a  product of $(q)_{h-1}$ where $h$ is the length of the ribbon. The power of $q$ in the numerator is made from two summands. The first one
 \[\sum_{i=1}^n\nu_{\sigma(i)}(H-\sigma(i)+(n-m)+1-B_i)+\sum_{j=1}^m\mu_{\tau(j)}(H'-\tau(j)+m-n-C_j)\]
is the sum of the weights of the vertices $v_i$ in cones $\mathcal{K}_{v_{\sigma,\tau,A},i}$ corresponding to ribbons. The second one comes from rewriting $1/(1-q^{-s})$ as $-q^{s}/(1-q^s)$. Such terms come from the edges of the form $-(1,\ldots,1,0,\ldots,0)$ as in Lemma \ref{lem:Svsigma}.

It was explained in the proof of Lemma \ref{lem:Svsigma} that for ribbons which contain boxes $(i,H)$ such terms correspond to the pairs of consecutive boxes $s$, $s+1$ such that $s+1$ is an upper neighbor of the box $s$. For ribbons which contain boxes $(H',j)$ the situation is reflected: if we number boxes from an upper right corner then such terms correspond to the pairs of consecutive boxes $s$, $s+1$ such that $s+1$ is a left neighbor of the box $s$. 

Moreover, it was explained in the proof of the Lemma \ref{lem:negativeterms} that such terms correspond to the overtaking of the particles, and the corresponding overtakings give the terms $\sum_{i=1}^nB_i(\sigma(i)-i)$ and $\sum_{j=1}^mC_j(\tau(j)-j)$ (compare with \eqref{eq:transf}). Now for ribbons which contain boxes $(i,H)$ we have an additional phenomena, namely the particle overtakes standing particles in nonpositive positions $(0,\ldots,-H+n+1)$ (except particles in position $-A_j$, for $j>i$). In terms of the proof of the Lemma \ref{lem:negativeterms} it means that the line $y=x-n+m+c$ for $c<0$, can intersect not the first box of another ribbon but vertical border of the diagram $(H^n,m^{H'-n})$. For each $1\leq i \leq r$ this gives $\sum_{s=1}^{A_i} s-\sum _{j=i+1}^r(A_i-A_j)$. The resulting formula is 
\[
 \sum_{i=1}^r\binom{A_i+1}2+\sum_{i=1}^rA_i(2i-r-1)+\sum_{i=1}^nB_i(\sigma(i)-i)+\sum_{j=1}^mC_j(\tau(j)-j).\]
 Putting all things together and using \eqref{eq:B}, \eqref{eq:C} we get \eqref{eq:DeltasigmatauA} (the sign $(-1)^{\sum A_i-i+1}$ comes from overtakings of  standing particles).
\end{proof}
Now we find $\chi^{n,m}_{\mu,\nu,\lambda}(q)$ as $\lim_{H,H' \rightarrow \infty}q^{-\Delta^{(H,H')}}\mathbb{S}_q(P^{n,m,(H,H')}_{\mu,\nu,\lambda})$, where 
\begin{multline*}
\Delta^{(H,H')}=\sum_{i=1}^{n-r}\nu_i(H-P_i-i+n-m)+\sum_{j=1}^{m-r}\mu_j(H'-Q_j-j+m-n)\\+\sum_{i=n-r+1}^{n}\nu_i(H-i+n-m+1)+\sum_{j=m-r+1}^{m}\mu_j(H'-j+m-n).
\end{multline*}
It is easy to see that $\Delta^{\sigma,\tau,A,(H,H')}(\mu,\nu,\lambda)-\Delta^{(H,H')}=\Delta^{\tilde{\sigma},\tilde{\tau},A}(\mu,\nu,\lambda)$ for  \[\tilde{\sigma}=\sigma\circ\begin{pmatrix}
1 & \ldots & r & r+1& \ldots &n\\ n & \ldots & n-r+1 & 1& \ldots & n-r
\end{pmatrix},\;\; \tilde{\tau}=\tau \circ\begin{pmatrix}
1 & \ldots & r & r+1 & \ldots & m\\ m-r+1 & \ldots & m & 1 & \ldots & m-r \end{pmatrix}
\] 
and we get formula~\eqref{eq:chi:bos'}.
\end{proof}

\subsection{\!\!} \label{ssec:nu<mu}
In the previous subsection we proved Theorem \ref{th:2'} under the assumption \eqref{eq:nu>mu}
\[
\nu_1>\ldots >\nu_n >\mu_1 > \ldots >\mu_m.
\]
But this theorem holds for any $\nu,\mu$ since it is equivalent to Theorem \ref{th:1} proven in section~\ref{sec:Paths}. In this subsection we explain how to get rid of the condition \eqref{eq:nu>mu} in the context of Brion theorem.

First, note that if some inequalities between $\nu_i,\mu_j$ become equalities then the polyhedron $P^{n,m, (H,H')}_{\mu,\nu,\lambda}$ degenerates. This degeneration changes combinatorial structure, in particular, some of the vertices merge. But one can ignore this when using Brion theorem (see arguments in \cite[Sec. 8]{Feigin Makhlin}). Therefore Theorem \ref{th:2'} still holds.

Now fix any strong order $\mathfrak{o}$ on $\{\nu_1,\ldots,\nu_n,\mu_1,\ldots, \mu_m\}$ such that $\nu_{i_1}>\nu_{i_2}$, $\mu_{j_1}>\mu_{j_2}$ for $i_1<i_2$ and $j_1<j_2$.
Proposition~\ref{pr:Makh} implies that vertices with nonzero contribution to $\mathbb{S}_q(P^{n,m, (H,H')}_{\mu,\nu,\lambda})$ correspond to decompositions of the skew diagram $(H^n,m^{H'-n})-\lambda$ into $m+n$ ribbons (of which $n$ contain boxes $(i,H)$, $1 \leq i \leq n$, and $m$ contain boxes $(H',j)$, $1 \leq j \leq m$). 

The order $\mathfrak{o}$ provides an additional condition on a such decomposition. The usual partial order on boxes ($b_i\ge b_j$ if $b_i$ lies to the northwest of $b_j$) induces a partial order on ribbons: $K_i\ge K_j$ if there are boxes $b_i\in K_i$, $b_j\in K_j$ such that $b_i\ge b_j$. Such order should be compatible with the order $\mathfrak{o}$ if we identify ribbons containing $(i,H)$ with $\nu_i$ and ribbons containing $(H',j)$ with $\mu_j$.

\begin{figure}[h]
\begin{tikzpicture}
[scale=.7,nodes={font=\small}]
\node at (4.5,-4.7) {$\nu_1>\nu_2>\mu_1>\mu_2$};
\node at (4.5,-5.3) {\phantom{$\nu_1$}};
\clip[preaction={draw=black,thick}]
  (1,2)--(2,2)--(2,4)--
  (8,4)--(8,2)--(3,2)--(3,-4)--(1,-4)--cycle;
\draw[help lines,dashed] (0,-4.5) grid (7.5,4);
\draw[thick]
  (7.2,3.5)--(2.5,3.5) 
  (7.2,2.5)--(2.5,2.5)--(2.5,1.5)--(1.5,1.5)--(1.5,.5) 
  (1.5,-3.2)--(1.5,-.5)--(2.5,-.5)--(2.5,.5) 
  (2.5,-3.2)--(2.5,-1.5) 
;
\foreach \i in {1,...,7} {
  \foreach \j in {1,...,6} {
     \draw[fill=white] (\j+.5,4.5-\i) circle (.75mm);
  }
  \node at (7.5,{4.5-\i}) {$\nu_{\i}$};
  \node at ({\i+.5},-3.5) {$\mu_{\i}$};
}
\draw[thick,preaction={draw=white}]
  (1,2)--(2,2)--(2,4)--
  (9,4)--(9,2)--(3,2)--(3,-4)--(1,-4)--cycle;
\end{tikzpicture}
\quad
\begin{tikzpicture}
[scale=.7,nodes={font=\small}]
\node at (4.5,-4.7) {$\nu_1>\mu_1>\nu_2>\mu_2,$};
\node at (4.5,-5.3) {$\nu_1>\mu_1>\mu_2>\nu_2$};
\clip[preaction={draw=black,thick}]
  (1,2)--(2,2)--(2,4)--
  (8,4)--(8,2)--(3,2)--(3,-4)--(1,-4)--cycle;
\draw[help lines,dashed] (0,-4.5) grid (7.5,4);
\draw[thick]
  (7.2,3.5)--(2.5,3.5) 
  (7.2,2.5)--(3.5,2.5) 
  (1.5,-3.2)--(1.5,1.5)--(2.5,1.5)--(2.5,2.5) 
  (2.5,-3.2)--(2.5,.5) 
;
\foreach \i in {1,...,7} {
  \foreach \j in {1,...,6} {
     \draw[fill=white] (\j+.5,4.5-\i) circle (.75mm);
  }
  \node at (7.5,{4.5-\i}) {$\nu_{\i}$};
  \node at ({\i+.5},-3.5) {$\mu_{\i}$};
}
\draw[thick,preaction={draw=white}]
  (1,2)--(2,2)--(2,4)--
  (9,4)--(9,2)--(3,2)--(3,-4)--(1,-4)--cycle;
\end{tikzpicture}
\quad
\begin{tikzpicture}
[scale=.7,nodes={font=\small}]
\node at (4.5,-4.7) {$\nu_1>\mu_1>\mu_2>\nu_2$,};
\node at (4.5,-5.3) {$\mu_1>\nu_1>\mu_2>\nu_2$};
\clip[preaction={draw=black,thick}]
  (1,2)--(2,2)--(2,4)--
  (8,4)--(8,2)--(3,2)--(3,-4)--(1,-4)--cycle;
\draw[help lines,dashed] (0,-4.5) grid (7.5,4);
\draw[thick]
  (7.2,3.5)--(2.5,3.5) 
  (7.2,2.5)--(3.5,2.5) 
  (1.5,-3.2)--(1.5,1.5) 
  (2.5,-3.2)--(2.5,2.5) 
;
\foreach \i in {1,...,7} {
  \foreach \j in {1,...,6} {
     \draw[fill=white] (\j+.5,4.5-\i) circle (.75mm);
  }
  \node at (7.5,{4.5-\i}) {$\nu_{\i}$};
  \node at ({\i+.5},-3.5) {$\mu_{\i}$};
}
\draw[thick,preaction={draw=white}]
  (1,2)--(2,2)--(2,4)--
  (9,4)--(9,2)--(3,2)--(3,-4)--(1,-4)--cycle;
\end{tikzpicture}
\caption{Some examples of decompositions and compatible orders}
\end{figure}
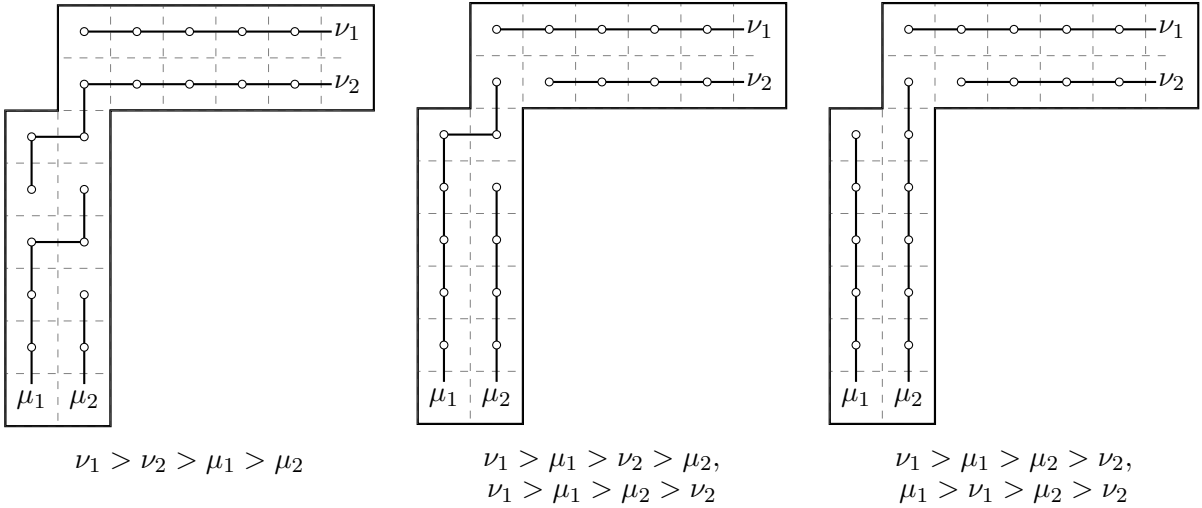

The set of vertices depends on the order $\mathfrak{o}$. But the contribution of a vertex is a product of ribbon contributions, and each such contribution is defined for any $\nu_i$ and $\mu_j$ (not necessarily compatible with $\mathfrak{o}$).

So for any given order $\mathfrak{o}$ the function $\mathbb{S}_q(P^{n,m, (H,H')}_{\mu,\nu,\lambda})$ computed using Brion theorem is defined for any nonnegative integer numbers $\nu_i,\mu_j$ not necessarily compatible with $\mathfrak{o}$. For any given $n,m,\mu,\nu,\lambda$ we denote this function just by $\mathbb{S}_{\mathfrak{o}}^{(H,H')}(q)$ and its limit (as $H,H'\rightarrow\infty$) as
$\mathbb{S}_{\mathfrak{o}}(q)$.

\begin{Example} \label{ex:n=m=1}
Let $n=m=1$, $\lambda=\varnothing$. We have two possible orders $\mathfrak{o}\colon \nu_1>\mu_1$ and $\mathfrak{o}'\colon \mu_1>\nu_1$. For $\mathfrak{o}$ we have  $H'-1$ vertices and using Brion Theorem we obtain 
\[\mathbb{S}^{(H,H')}_{\mathfrak{o}}(q)=\sum_{a=0}^{H'-2} \frac{(-1)^a q^{\binom{a+1}{2}}q^{(H+a)\nu_1+(H'-a-1)\mu_1}}{(q)_{H+a-1}(q)_{H'-a-2}}.\]
For the order $\mathfrak{o'}$ we have $H-1$ vertices and using Brion Theorem we obtain
\[\mathbb{S}^{(H,H')}_{\mathfrak{o}'}(q)=\sum_{b=0}^{H-2} \frac{(-1)^b q^{\binom{b+1}{2}}q^{(H-b-1)\nu_1+(H'+b)\mu_1}}{(q)_{H-b-2}(q)_{H'+b-1}}.\]

These formulas are different, they even have different number of summands. Brion theorem proves that the first formula calculates $\mathbb{S}_q(P^{1,1, (H,H')}_{\{\mu_1\},\{\nu_1\},\varnothing})$ for $\nu_1>\mu_1$ and the second formula for $\mu_1>\nu_1$. But in some cases both formulas work!

Namely, using the $q$-binomial theorem in the form
\[
\sum_{c=0}^{N}\frac{(-1)^c q^{\binom{c}{2}} x^c}{(q)_c(q)_{N-c}}=\frac{1}{(q)_N}\prod_{i=1}^N(1-xq^{i-1})\]
for $x=q^{\mu_1-\nu_1-H'+2}$, $N=H+H'-3$ and an obvious observation that right side vanishes for $x=q^{-d}$, $0\leq d \leq N-1$ we have 
\[\mathbb{S}^{(H,H')}_{\mathfrak{o}}(q)=\mathbb{S}^{(H,H')}_{\mathfrak{o}'}(q), \text{ for }0\leq \nu_1-\mu_1+H'-2\leq  H+H'-4.\]
It is convenient to rewrite the last inequality as 
\begin{equation}2-H'\leq \nu_1-\mu_1\leq H-2. \label{eq:inneq11}\end{equation}
In this region both formulas give the same (and therefore correct) answer.

In the limit $H,H'\rightarrow \infty$ situation becomes simpler. Indeed, for any given $\nu_1,\mu_1$ and large enough $H,H'$ the inequality \eqref{eq:inneq11} is satisfied and in the limit $\mathbb{S}_{\mathfrak{o}}(q)=\mathbb{S}_{\mathfrak{o}'}(q)$. 
\end{Example}

More generally, we have:
\begin{Proposition}\label{pr:oo'}
If $n+1-H'\leq \nu_i-\mu_j\leq H-m-1$ for any $i,j$ then for any two orders $\mathfrak{o}$, $\mathfrak{o}'$ we have 
\[\mathbb{S}^{(H,H')}_{\mathfrak{o}}(q)=\mathbb{S}^{(H,H')}_{\mathfrak{o}'}(q). \]
\end{Proposition}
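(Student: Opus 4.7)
The plan is to reduce to comparing two orders $\mathfrak{o},\mathfrak{o}'$ that differ by a single adjacent swap $\nu_i\leftrightarrow\mu_j$, and then to localize the problem to the pair of ribbons $(K_{\nu_i},K_{\mu_j})$ and apply the $q$-binomial identity of Example~\ref{ex:n=m=1}. Any strong order compatible with the fixed partial orders $\nu_1>\cdots>\nu_n$ and $\mu_1>\cdots>\mu_m$ is a shuffle of these two sequences, and any two shuffles are connected by a chain of such adjacent transpositions, so transitivity reduces the proposition to the adjacent case.

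Fix then $\mathfrak{o},\mathfrak{o}'$ differing only in the relative order of the neighbouring pair $\nu_i,\mu_j$. For any ribbon decomposition of $(H^n,m^{H'-n})-\lambda$, all compatibility constraints except the one between $K_{\nu_i}$ and $K_{\mu_j}$ are identical under the two orders. I would therefore group the vertex sums on both sides by first fixing the $n+m-2$ ``other'' ribbons (subject to the common part of the admissibility conditions) and then summing over two-ribbon tilings of the complementary region $R$ by $(K_{\nu_i},K_{\mu_j})$. Because the vertex contribution \eqref{eq:SvsigmatauA} factors over ribbons, the common factor coming from the fixed ribbons pulls out, and the problem reduces to a purely local identity: the sum over $\mathfrak{o}$-compatible two-ribbon tilings of $R$ equals the sum over $\mathfrak{o}'$-compatible ones. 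A geometric case check shows that $R$ is always an L-shaped skew region, with a horizontal arm of some length $\tilde H\le H$ containing $(i,H)$ and a vertical arm of some length $\tilde H'\le H'$ containing $(H',j)$. Under this identification, the two local sums are precisely the two functions $\mathbb{S}_{\mathfrak{o}}^{(\tilde H,\tilde H')}(q)$ and $\mathbb{S}_{\mathfrak{o}'}^{(\tilde H,\tilde H')}(q)$ of Example~\ref{ex:n=m=1} (with effective asymptotic values $\nu_i,\mu_j$), and the $q$-binomial theorem argument given there shows they agree provided $2-\tilde H'\le \nu_i-\mu_j\le \tilde H-2$.

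The main obstacle is the range check: verifying that the global hypothesis $n+1-H'\le\nu_i-\mu_j\le H-m-1$ forces the local inequality $2-\tilde H'\le \nu_i-\mu_j\le \tilde H-2$ for every admissible configuration of fixed ribbons. The content is geometric: among the other ribbons, only those emanating from column positions $(H',\cdot)$ or from row positions $(\cdot,H)$ can intrude into the horizontal or vertical arm of $R$ near the corner, and a careful accounting gives $\tilde H\ge H-m+1$ and $\tilde H'\ge H'-n+1$, matching the hypothesis. Making these bounds precise and treating the degenerate configurations (e.g.\ when a fixed ribbon abuts $(i,H)$ or $(H',j)$, when $R$ has an arm of length zero, or when $\lambda$ pushes $R$ into an unusual shape) is the main technical step; once it is done, termwise equality of the two grouped sums yields $\mathbb{S}^{(H,H')}_{\mathfrak{o}}(q)=\mathbb{S}^{(H,H')}_{\mathfrak{o}'}(q)$.
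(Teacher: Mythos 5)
Your overall strategy is the paper's: reduce to two orders differing by an adjacent swap of some $\nu_i$ and $\mu_j$, fix the other $n+m-2$ ribbons, and compare the local sums over the two ways of tiling the union $R$ of the two affected ribbons, using the $q$-binomial identity of Example~\ref{ex:n=m=1}. The genuine gap is your ``geometric case check'': the claim that $R$ is always an L-shaped (hook) region is false, and the cases it excludes are not cosmetic. The paper's proof distinguishes three situations. (i) The two ribbons may share no edge, so $R$ is disconnected and each side has the same single term --- harmless, but already not L-shaped. (ii) $R$ may be a single ribbon, but a ribbon with several turns (see the example $\alpha=(4,2,1,1)$, $\beta=(1)$ in Fig.~\ref{fig:exwalls}), not a hook; the needed identity is then Lemma~\ref{lem:wallsribbon}, a genuine generalization of Example~\ref{ex:n=m=1} in which the summands carry shape-dependent exponents (coming from the ``overtaking'' terms in \eqref{eq:SvsigmatauA}), so the local sums are not literally the functions $\mathbb{S}^{(\tilde H,\tilde H')}_{\mathfrak{o}}$ of the example and the reduction to the $q$-binomial theorem has to be redone for a general ribbon. (iii) Most importantly, $R$ may be connected but contain a $2\times 2$ block (width two in the middle), hence not a ribbon at all. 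Here there are exactly two admissible tilings, one compatible with each order, and they contribute \emph{equal} terms that simply change sides; this is a direct equality of two summands (essentially the last step of Makhlin's Theorem~2.1), not an alternating $q$-binomial cancellation, and your argument as written never produces it.

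A secondary, fixable issue is the range check. Because you let the effective lengths $\tilde H,\tilde H'$ depend on the configuration of the fixed ribbons, you must prove $\tilde H\ge H-m+1$ and $\tilde H'\ge H'-n+1$ for every configuration, which you leave open. The paper sidesteps this: Lemma~\ref{lem:wallsribbon} is stated for any ribbon containing $(i,H)$ and $(H',j)$ inside the $H\times H'$ rectangle under the condition $1+i-H'\le \nu_i-\mu_j\le H-j-1$, which follows at once from the hypothesis $n+1-H'\le\nu_i-\mu_j\le H-m-1$ since $i\le n$ and $j\le m$. I recommend adopting that formulation; it removes the configuration-dependent accounting entirely. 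With case (iii) added and the local identity upgraded from the hook example to the general-ribbon lemma, your argument matches the paper's proof.
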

\begin{proof}
It is enough to consider the case when the orders $\mathfrak{o}$, $\mathfrak{o}'$ differ only by an elementary transposition of $\nu_i$ and $\mu_j$ (in $\mathfrak{o}\colon \nu_i>\mu_j$ and $\mathfrak{o}'\colon \mu_j>\nu_i$). Recall that the summands in $\mathbb{S}^{(H,H')}_{\mathfrak{o}}(q)$ and $\mathbb{S}^{(H,H')}_{\mathfrak{o}'}(q)$ correspond to decompositions of the skew diagram $(H^n,m^{H'-n})-\lambda$ into $m+n$ ribbons compatible with the orders $\mathfrak{o}$ and $\mathfrak{o'}$ correspondingly. There are three possibilities.

\textit{Case 1.} Ribbons containing boxes $(i,H)$ and $(H',j)$ have no common edges. Corresponding summands appear both in $\mathbb{S}^{(H,H')}_{\mathfrak{o}}(q)$ and $\mathbb{S}^{(H,H')}_{\mathfrak{o}'}(q)$.

\textit{Case 2.} Union of ribbons containing boxes $(i,H)$ and $(H',j)$ is a ribbon. Denote this ribbon by $\alpha-\beta$. Fix ribbons containing other end-boxes $(i',H)$ and $(H',j')$ for $i'\neq i$ and $j'\neq j$. For such summands $\alpha-\beta$ is divided by an internal edge $e$ into two ribbons containing $(i,H)$ and $(H',j)$. Denote by $\mathbb{S}_{e, \alpha-\beta}(q)$ the product of contributions of these two ribbons. If the edge $e$ is horizontal then the corresponding term appears in $S_\mathfrak{o}^{(H,H')}(q)$, otherwise in $\mathbb{S}^{(H,H')}_{\mathfrak{o}'}(q)$. 

\begin{Lemma} \label{lem:wallsribbon}
Suppose $\alpha-\beta$ is a ribbon such that $\alpha-\beta$ lies in the rectangle $H\times H'$ and contains boxes $(i,H)$ and $(H',j)$. If $1+i-H'\leq \nu_1-\mu_1\leq H-j-1$ then
\[\sum_{e \colon \text{horizontal}}\mathbb{S}_{e, \alpha-\beta}(q)=\sum_{e\colon \text{vertical}}\mathbb{S}_{e, \alpha-\beta}(q)\]
\end{Lemma}
This lemma is a generalization of Example \ref{ex:n=m=1} and by straightforward calculation reduces to the $q$-binomial theorem. Due to this lemma the contributions to $\mathbb{S}^{(H,H')}_{\mathfrak{o}}(q)$ and $\mathbb{S}^{(H,H')}_{\mathfrak{o}'}(q)$ are equal to each other in this case.

\begin{Example} Consider $\alpha=(4,2,1,1)$ and $\beta=(1)$. In this case $\alpha-\beta$ has 6 internal edges, corresponding decompositions are drawn below in Fig. \ref{fig:exwalls}
\begin{figure}[h]
\begin{tikzpicture}[scale=.6,nodes={font=\small}]
	\node at (2.8,-3) {$\mathbb{S}_1$};
	\clip[preaction={draw=black,thick}]
	(0,-4)--(0,-1)--(1,-1)--(1,0)--(4,0)--(4,-1)--(2,-1)--(2,-2)--(1,-2) -- (1,-4)-- cycle;
	\draw[help lines,dashed] (0,-4) grid (4,0);
	\node at (3.5,-0.5) {$\nu_{1}$};	
	\node at (0.5,-3.5) {$\mu_{1}$};			
	\draw[thick]
	(3.2,-0.5)--(1.5,-0.5) -- (1.5,-1.5) -- (0.5,-1.5) -- (0.5,-2.5) 
	;
	\foreach \i in {1,...,3} {
		\foreach \j in {0,...,2} {
			\draw[fill=white] (\j+.5,0.5-\i) circle (.75mm);
		}
	}
	\draw[thick,preaction={draw=white}]
	(0,-4)--(0,-1)--(1,-1)--(1,0)--(4,0)--(4,-1)--(2,-1)--(2,-2)--(1,-2) -- (1,-4)-- cycle;
\end{tikzpicture} 
\begin{tikzpicture}[scale=.6,nodes={font=\small}]
	\node at (2.8,-3) {$\mathbb{S}_2$};
	\clip[preaction={draw=black,thick}]
	(0,-4)--(0,-1)--(1,-1)--(1,0)--(4,0)--(4,-1)--(2,-1)--(2,-2)--(1,-2) -- (1,-4)-- cycle;
	\draw[help lines,dashed] (0,-4) grid (4,0);
	\node at (3.5,-0.5) {$\nu_{1}$};	
	\node at (0.5,-3.5) {$\mu_{1}$};			
	\draw[thick]
	(3.2,-0.5)--(1.5,-0.5) -- (1.5,-1.5) -- (0.5,-1.5) 
	(0.5,-3.2)--(0.5,-2.5) 
	;
	\foreach \i in {1,...,3} {
		\foreach \j in {0,...,2} {
			\draw[fill=white] (\j+.5,0.5-\i) circle (.75mm);
		}
	}
	\draw[thick,preaction={draw=white}]
	(0,-4)--(0,-1)--(1,-1)--(1,0)--(4,0)--(4,-1)--(2,-1)--(2,-2)--(1,-2) -- (1,-4)-- cycle;
\end{tikzpicture}
\begin{tikzpicture}[scale=.6,nodes={font=\small}]
	\node at (2.8,-3) {$\mathbb{S}_3$};
	\clip[preaction={draw=black,thick}]
	(0,-4)--(0,-1)--(1,-1)--(1,0)--(4,0)--(4,-1)--(2,-1)--(2,-2)--(1,-2) -- (1,-4)-- cycle;
	\draw[help lines,dashed] (0,-4) grid (4,0);
	\node at (3.5,-0.5) {$\nu_{1}$};	
	\node at (0.5,-3.5) {$\mu_{1}$};			
	\draw[thick]
	(3.2,-0.5)--(1.5,-0.5) -- (1.5,-1.5) 
	(0.5,-3.2)--(0.5,-1.5) 
	;
	\foreach \i in {1,...,3} {
		\foreach \j in {0,...,2} {
			\draw[fill=white] (\j+.5,0.5-\i) circle (.75mm);
		}
	}
	\draw[thick,preaction={draw=white}]
	(0,-4)--(0,-1)--(1,-1)--(1,0)--(4,0)--(4,-1)--(2,-1)--(2,-2)--(1,-2) -- (1,-4)-- cycle;
\end{tikzpicture}
\begin{tikzpicture}[scale=.6,nodes={font=\small}]
	\node at (2.8,-3) {$\mathbb{S}_4$};
	\clip[preaction={draw=black,thick}]
	(0,-4)--(0,-1)--(1,-1)--(1,0)--(4,0)--(4,-1)--(2,-1)--(2,-2)--(1,-2) -- (1,-4)-- cycle;
	\draw[help lines,dashed] (0,-4) grid (4,0);
	\node at (3.5,-0.5) {$\nu_{1}$};	
	\node at (0.5,-3.5) {$\mu_{1}$};			
	\draw[thick]
	(3.2,-0.5)--(1.5,-0.5)  
	(0.5,-3.2)--(0.5,-1.5) -- (1.5,-1.5) 
	;
	\foreach \i in {1,...,3} {
		\foreach \j in {0,...,2} {
			\draw[fill=white] (\j+.5,0.5-\i) circle (.75mm);
		}
	}
	\draw[thick,preaction={draw=white}]
	(0,-4)--(0,-1)--(1,-1)--(1,0)--(4,0)--(4,-1)--(2,-1)--(2,-2)--(1,-2) -- (1,-4)-- cycle;
\end{tikzpicture}\;
	\begin{tikzpicture}[scale=.6,nodes={font=\small}]
	\node at (2.8,-3) {$\mathbb{S}_5$};
	\clip[preaction={draw=black,thick}]
	(0,-4)--(0,-1)--(1,-1)--(1,0)--(4,0)--(4,-1)--(2,-1)--(2,-2)--(1,-2) -- (1,-4)-- cycle;
	\draw[help lines,dashed] (0,-4) grid (4,0);
	\node at (3.5,-0.5) {$\nu_{1}$};	
	\node at (0.5,-3.5) {$\mu_{1}$};			
	\draw[thick]
	(3.2,-0.5)  --(2.5,-0.5) 
	(0.5,-3.2)--(0.5,-1.5) -- (1.5,-1.5) --(1.5,-0.5) 
	;
	\foreach \i in {1,...,3} {
		\foreach \j in {0,...,2} {
			\draw[fill=white] (\j+.5,0.5-\i) circle (.75mm);
		}
	}
	\draw[thick,preaction={draw=white}]
	(0,-4)--(0,-1)--(1,-1)--(1,0)--(4,0)--(4,-1)--(2,-1)--(2,-2)--(1,-2) -- (1,-4)-- cycle;
\end{tikzpicture}
\begin{tikzpicture}[scale=.6,nodes={font=\small}]
	\node at (2.8,-3) {$\mathbb{S}_6$};				
	\clip[preaction={draw=black,thick}]
	(0,-4)--(0,-1)--(1,-1)--(1,0)--(4,0)--(4,-1)--(2,-1)--(2,-2)--(1,-2) -- (1,-4)-- cycle;
	\draw[help lines,dashed] (0,-4) grid (4,0);
	\node at (3.5,-0.5) {$\nu_{1}$};	
	\node at (0.5,-3.5) {$\mu_{1}$};			
	\draw[thick]
	(3.2,-0.5) 
	(0.5,-3.2)--(0.5,-1.5) -- (1.5,-1.5) --(1.5,-0.5) -- (2.5,-0.5) 
	;
	\foreach \i in {1,...,3} {
		\foreach \j in {0,...,2} {
			\draw[fill=white] (\j+.5,0.5-\i) circle (.75mm);
		}
	}
	\draw[thick,preaction={draw=white}]
	(0,-4)--(0,-1)--(1,-1)--(1,0)--(4,0)--(4,-1)--(2,-1)--(2,-2)--(1,-2) -- (1,-4)-- cycle;
\end{tikzpicture}
\caption{}\label{fig:exwalls}
\end{figure}	
The terms $\mathbb{S}_1,\mathbb{S}_2, \mathbb{S}_4$ correspond to the horizontal internal edges $e$ and order $\nu_1>\mu_1$. 
The terms $\mathbb{S}_3,\mathbb{S}_5, \mathbb{S}_6$ correspond to the vertical internal edges $e$ and order $\mu_1>\nu_1$. We have 
\begin{multline*}
\mathbb{S}_1+\mathbb{S}_2-\mathbb{S}_3+ \mathbb{S}_4-\mathbb{S}_5-\mathbb{S}_6=\frac{q^{\mu_1+6\nu_1+4}}{(q)_5}-\frac{q^{2\mu_1+5\nu_1+2}}{(q)_1(q)_4}+\frac{q^{3\mu_1+4\nu_1+1}}{(q)_2(q)_3}- \\ -\frac{q^{4\mu_1+3\nu_1+1}}{(q)_3(q)_2}+\frac{q^{5\mu_1+2\nu_1+2}}{(q)_4(q)_1}-\frac{q^{6\mu_1+\nu_1+4}}{(q)_5}=\frac{q^{\mu_1+6\nu_1+4}}{(q)_5}\prod_{i=1}^5(1-q^{\mu_1-\nu_1-3+i})
\end{multline*}
So in other words we get zero for $-2\leq \nu_1-\mu_1\leq 2$ as in Lemma \ref{lem:wallsribbon}.
\end{Example}

\textit{Case 3.}	Union of ribbons containing boxes $(i,H)$ and $(H',j)$ is a connected skew Young diagram $\alpha-\beta$ but not a ribbon. Informally it means that $\alpha-\beta$ has width 2 in the middle. In this case there are two ways to decompose $\alpha-\beta$ into two ribbons. Corresponding two terms are equal to each other, and one goes to $\mathbb{S}^{(H,H')}_{\mathfrak{o}}(q)$ and other to $\mathbb{S}^{(H,H')}_{\mathfrak{o}'}(q)$.
\end{proof} 
\begin{Remark}
	The calculation in Case 3 is essentially the last step in the proof of \cite[Theorem 2.1]{Makhlin} (see our Proposition \ref{pr:Makh}).
\end{Remark}

Tending $H,H' \rightarrow \infty$ we get from Proposition \ref{pr:oo'} that the function $\mathbb{S}_{\mathfrak{o}}(q)$ does not depend on the order $\mathfrak{o}$. For the actual order of $\nu_i,\mu_j$ this function coincides with $\chi_{\mu,\nu,\lambda}^{n,m}(q)$ and for the order \eqref{eq:nu>mu} this function coincides with right side of \eqref{eq:chi:bos'}. Hence we proved Theorem \ref{th:2'} for any pair of partitions $\nu, \mu$, with $l(\nu)\leq n$, $l(\mu)\leq m$.

\section{Algebras, representations and resolutions} \label{sec:alg}

\subsection{\!\!} For the reference of quantum toroidal algebra $U_{\vec{q}}(\ddot{\mathfrak{gl}}_1)$ one can use \cite[Sec. 2]{Feigin_Miwa:2015} or \cite{Tsymbaluk}, but our notation slightly differs from the loc. cit.\footnote{Currently there is no standard convention to the notation, even for the algebra itself other names $\mathcal{E}$, $\mathcal{E}_1$, $\mathbf{SH}$, $\ddot{U}_{q_1,q_2,q_3}(\mathfrak{gl}_1)$ are also used in the literature.}

Fix complex numbers $\epsilon_i$, where $i=1,2,3$  viewed as mod 3 residues. We assume that $\epsilon_1+\epsilon_2+\epsilon_3=0$. Denote $q_i=e^{\epsilon_i}$, $\vec{q}=(q_1,q_2,q_3)$.
We assume further that $q_1, q_2, q_3$ are generic, i.e.,  for integers $l,m,n\in\mathbb{Z}$,  
$q_1^lq_2^mq_3^n=1$ holds only if $l=m=n$. 
We set
\begin{align*}
 g(z,w)=\prod_{i=1}^3 (z-q_iw),\quad
\kappa_r=\prod_{i=1}^3(q_i^{r/2}-q_i^{-r/2})=\sum_{i=1}^3(q_i^{r}-q_i^{-r}), \quad \delta(z)=\sum_{m \in \mathbb{Z}}z^m.
\end{align*}

The algebra $U_{\vec{q}}(\ddot{\mathfrak{gl}}_1)$ is generated by $E_m, F_m, H_{r}$ where $m \in \mathbb{Z}$, $r \in \mathbb{Z}\setminus 0$ and invertible central elements $C, C^\perp$. In order to write down the defining relations we form the currents (generating functions of operators)
\begin{align*}
E(z)=\sum_{m \in \mathbb{Z}}  E_mz^{-m},\;\; F(z)=\sum_{m \in \mathbb{Z}}  F_mz^{-m},\;\;  K^\pm(z)=(C^\perp)^{\pm 1} \exp\left(\sum_{r>0} \mp\frac{\kappa_r}{ r} H_{\pm r} z^{\mp r} \right).
\end{align*}
The relations have form
\begin{align*}
&g(z,w)E(z)E(w)+g(w,z)E(w)E(z)=0, \qquad\quad g(w,z)F(z)F(w)+g(z,w)F(w)F(z)=0,\\
&K^\pm(z)K^\pm(w) = K^\pm(w)K^\pm (z), 
\quad \qquad
\frac{g(C^{-1}z,w)}{g(C z,w)}K^-(z)K^+ (w) 
=
\frac{g(w,C^{-1}z)}{g(w,C z)}K^+(w)K^-(z),
\\
&g(z,w)K^\pm(C^{(-1\mp1)/2}z)E(w)
+g(w,z)E(w)K^\pm(C^{(-1\mp1) /2}z)=0,
\\
&g(w,z)K^\pm(C^{(-1\pm1)/2}z)
F(w)+g(z,w)F(w)K^\pm(C^{(-1\pm1)/2}z)=0\,,
\\
&[E(z),F(w)]=\frac{1}{\kappa_1}
(\delta\bigl(\frac{Cw}{z}\bigr)K^+(w)
-\delta\bigl(\frac{Cz}{w}\bigr)K^-(z)),\\
&\mathop{\mathrm{Sym}}_{z_1,z_2,z_3}z_2z_3^{-1}
[E(z_1),[E(z_2),E(z_3)]]=0,\qquad\quad \mathop{\mathrm{Sym}}_{z_1,z_2,z_3}z_2z_3^{-1}
[F(z_1),[F(z_2),F(z_3)]]=0.
\end{align*}

There exists an action of the group $\widetilde{SL}(2,\mathbb{Z})$ on the toroidal algebra $U_{\vec{q}}(\ddot{\mathfrak{gl}}_1)$ by automorphisms, see \cite[Sec. 6.5]{Burban}. We denote by $E_m^\perp, F_m^\perp, H_r^\perp$ images of generators $E_m, F_m, H_r$  after   rotation of the lattice clockwise by 90 degrees. Under this rotation $C$ goes to $C^\perp$.

Denote by $d$ the operator 
\[
[d, E_m]=-mE_m,\quad [d,F_m]=-mF_m,\quad 
[d, H_{r}] =-r H_{r},\quad [d,C]=[d,C^\perp]=0.
\]
This operator introduces the grading on the algebra $U_{\vec{q}}(\ddot{\mathfrak{gl}}_1)$. Sometimes it is convenient to consider $d$ as an additional generator of $U_{\vec{q}}(\ddot{\mathfrak{gl}}_1)$. Let $V$ be a representation of  $U_{\vec{q}}(\ddot{\mathfrak{gl}}_1)$ such that one can define an action of $d$ on the space $V$ with finite dimensional eigenspaces. By the character $\chi(V)$ we denote the trace of the operator $D=q^{d}$ where $q$ is a formal variable.

The algebra $U_{\vec{q}}(\ddot{\mathfrak{gl}}_1)$  has the following formal coproduct\footnote{Note that our $E_m, F_m, H_r$ are called $e_m^\perp, f_m^\perp, h_r^\perp$ in \cite{Feigin_Miwa:2015} (up to rescaling of $h_r$).} 
\begin{equation}
\begin{aligned}
&\Delta(H_r)=H_r\otimes 1+C^{-r}\otimes H_r,\quad 
\Delta(H_{-r})=H_{-r}\otimes C^{r}+1\otimes H_{-r}, \quad r>0 
\\
&\Delta(E(z))=E\left(C_2^{-1}z\right)\otimes K^+\left(C_2^{-1}z\right)+ 1\otimes E\left(z\right),\\
&\Delta(F(z))=F\left(z\right)\otimes 1 + K^-\left(C_1^{-1}z\right)\otimes F(C_1^{-1}z),\\
&\Delta(X)=X\otimes X,\;\; \text{for $X= C, C^\perp, D$},
\end{aligned} \label{eq:coprod}
\end{equation} 
where $C_1 =C\otimes 1$, $C_2 =1\otimes C$.

In all representations of $U_{\vec{q}}(\ddot{\mathfrak{gl}}_1)$ considered in this paper we have $C^\perp=1$.

In the paper \cite{Feigin_Miwa:2011} authors defined the MacMahon modules of the algebra $U_{\vec{q}}(\ddot{\mathfrak{gl}}_1)$. The MacMahon modules depend on three partitions $\mu,\nu,\lambda$ and two parameters $v, c \in \mathbb{C}$ (the central element $C$ acts on these modules as $c\operatorname{Id}$). These modules are denoted by $\mathcal{M}_{\mu,\nu,\lambda}(v,c)$. The module $\mathcal{M}_{\mu,\nu,\lambda}(v,c)$ has the basis $|a\rangle$, where $a$ is a plane partition which satisfies condition \eqref{eq:asymp}. The action of $d$ on $\mathcal{M}_{\mu,\nu,\lambda}(v,c)$ is defined by $d|a\rangle=|a||a\rangle$. Therefore the character $\chi(\mathcal{M}_{\mu,\nu,\lambda}(v,c))$ is equal to the generating function of plane partitions satisfying \eqref{eq:asymp}.

The modules  $\mathcal{M}_{\mu,\nu,\lambda}(v,c)$ were originally defined by the explicit formulas for the action of ``rotated'' generators $E_m^\perp, F_m^\perp, H_r^\perp$ in the basis labeled by plane partitions. For example, the action of $K^{\perp, \pm}(z)$ have the form 
\begin{equation}\label{eq:Kpm:Mac}
K^{\perp, \pm}(z)|a\rangle=c\frac{1-c^{-2}v/z}{1-v/z}\prod_{(i,j,k)\in a}\psi_{i,j,k}(v/z) |a\rangle
\end{equation}
where 
\[
\psi_{i,j,k}(v/z)=\frac{(1-q_1^{i-1}q_2^jq_3^kv/z)(1-q_1^iq_2^{j-1}q_3^kv/z)(1-q_1^iq_2^jq_3^{k-1}v/z)}{(1-q_1^{i+1}q_2^jq_3^kv/z)(1-q_1^iq_2^{j+1}q_3^kv/z)(1-q_1^iq_2^jq_3^{k+1}v/z)}.
\]
Notation $(i,j,k)\in a$ means that $(i,j,k)$ belongs to the corresponding 3d Young diagram, see Fig. \ref{fig:planepart}. It is easy to see that the product in the right side of \eqref{eq:Kpm:Mac}  becomes finite after cancellation of common factors, see \cite[eq. (3.27)]{Feigin_Miwa:2011}. The highest weight of $\mathcal{M}_{\mu,\nu,\lambda}(v,c)$ is given by the formula \eqref{eq:Kpm:Mac} applied for ``minimal'' plane partition $a$ satisfying conditions~\eqref{eq:asymp}. Note also that here we rather follow \cite{Feigin_Miwa:2015} then \cite{Feigin_Miwa:2011} in notations, for example, we include prefactor $c$ in the rational function \eqref{eq:Kpm:Mac}.


For generic values $c, v, q_1, q_2, q_3$ the module $\mathcal{M}_{\mu,\nu,\lambda}(v,c)$ is irreducible. But for $c=q_1^{n/2}q_2^{m/2}$ (and generic $v, q_1, q_2, q_3$) this module has one singular vector. The quotient by the submodule generated by this vector is irreducible. This quotient is denoted by $\mathcal{N}^{n,m}_{\mu,\nu,\lambda}(v)$ and has the basis $|a\rangle$ where $a$ is a plane partition, satisfying both conditions \eqref{eq:asymp} and \eqref{eq:pit}. Recall that partitions $\lambda, \mu, \nu$ satisfy $l(\nu)\leq n$, $l(\mu)\leq m$, and $\lambda_{n+1} < m+1$.

Therefore the character $\chi(\mathcal{N}^{n,m}_{\mu,\nu,\lambda}(v))$ is equal to the generating function $\chi^{n,m}_{\mu,\nu,\lambda}(q)$ defined in the introduction. This is the representation theoretic interpretation of the left side of \eqref{eq:chi:LGV}, \eqref{eq:chi:bos}, \eqref{eq:chi:bos'}. Now we will discuss the representation theoretic interpretation of the right sides.

\subsection{\!\!}  It is difficult to write down the explicit action of the generators $E_n,F_n,H_m$ in the modules $\mathcal{N}^{n,m}_{\mu,\nu,\lambda}(v)$. Now we recall a construction of another class of modules, namely, the Fock modules and intertwining operators between them, which are called screening operators. Then we sketch a construction of MacMahon modules $\mathcal{N}^{n,m}_{\mu,\nu,\lambda}(v)$ in these terms.

The name of Fock modules over $U_{\vec{q}}(\ddot{\mathfrak{gl}}_1)$ comes from the fact that the representation space is identified with the Fock module over some Heisenberg algebra. In these representations the currents $E(z), F(z), K^\pm(z)$ are given in terms of the Heisenberg algebra (as combination of vertex operators).

We start from the basic Fock modules $\calF_u^{(i)}$, where $u=e^p$, $p \in \mathbb{C}$ and $i=1,2,3$. The underlying vector spaces of these representations are  modules over a Heisenberg algebra with generators $a_n$, $n\in \mathbb{Z}$ and relations
\begin{equation}
[a_r,a_s]=r\frac{(q_i^{r/2}-q_i^{-r/2})^3}{-\kappa_{r}}\delta_{r+s,0}. \label{eq:ar}
\end{equation}
The space $\calF_u^{(i)}$ is generated by the highest weight vector $v_u^{(i)}$ such that 
\[a_rv_u^{(i)}=0 \text{ for }r>0;\qquad a_0v_u^{(i)}=- \frac{ \epsilon_i^2p}{\epsilon_1\epsilon_2\epsilon_3}v_u^{(i)}.\] 
Now we define representation $\rho_u^{(i)}$ of $U_{\vec{q}}(\ddot{\mathfrak{gl}}_1)$ in the space $\calF_u^{(i)}$ by the formulas
\begin{align}
\rho_u^{(i)}(E(z))&=\frac{u(1-q_i)}{\kappa_1}\exp\left(\sum_{r=1}^\infty\frac{q_i^{-r/2}\kappa_r}{r(q_i^{r/2}-q_i^{-r/2})^2}a_{-r}z^r\right)\exp\left(\sum_{r=1}^\infty\frac{\kappa_r}{r(q_i^{r/2}-q_i^{-r/2})^2}a_{r}z^{-r}\right),\notag \\
\rho_u^{(i)}(F(z))&=\frac{u^{-1}(1-q_i^{-1})}{\kappa_1}\exp\left(\sum_{r=1}^\infty\frac{-\kappa_r}{r(q_i^{r/2}-q_i^{-r/2})^2}a_{-r}z^r\right)\exp\left(\sum_{r=1}^\infty\frac{-q_i^{r/2}\kappa_r}{r(q_i^{r/2}-q_i^{-r/2})^2}a_{r}z^{-r}\right),\notag  \\
\rho_u^{(i)}(H_r)&=\frac{a_r}{q_i^{r/2}-q_i^{-r/2}}, \qquad \rho_u^{(i)}(C^\perp)=1,\qquad \rho_u^{(i)}(C)=q_i^{1/2}, \notag  \\
\rho_u^{(i)}(d)v_u^{(i)}&=\Delta_i(p) v_u^{(i)},\text{ where } \Delta_i(p)=-\frac{(p+\epsilon_i)^3-p^3} {6\epsilon_1\epsilon_2\epsilon_3}. \label{eq:Deltap}
\end{align}

Note that generally speaking the operators $a_0$ and $d$ can act on the highest weight vector $v_u^{(i)}$ by any numbers. Our choice is convenient for the formulas below, for example, the screening operators will commute with $d$ due to our choice.

We formally introduce operators $\widehat{Q}$ by the relation $[a_n,\widehat{Q}]=\frac{-\epsilon_i^3}{\epsilon_1\epsilon_2\epsilon_3}\delta_{n,0}$. This operator does not act on the spaces $\calF_u^{(i)}$, but for $x\in \mathbb{C}$ we  define operator  $e^{x\widehat{Q}}\colon \calF^{(i)}_{u}\rightarrow \calF^{(i)}_{q_i^{x}u}$ such that 
\[[a_n,e^{x\widehat{Q}}]=0 \text{ for }n \neq 0,\qquad e^{x\widehat{Q}}v^{(i)}_{u}=v^{(i)}_{q_i^xu}.\]

The $U_{\vec{q}}(\ddot{\mathfrak{gl}}_1)$ modules $\calF_{u}^{(i)}$ are irreducible. In terms of the rotated generators, their highest weight has the form
\begin{equation}\label{eq:hwFock}
K^{\perp, \pm}(z)v_u^{(i)}=q_i^{1/2}\frac{1-q_i^{-1}u/z}{1-u/z} v_u^{(i)}
\end{equation}
In particular, the highest weight of the Fock module $\calF_{u}^{(1)}$ coincides with the highest weight of 
the Macmahon module $\mathcal{M}_{\varnothing,\{\nu_1\},\{\lambda_1\}}(v,c)$ for $c=q_1^{1/2}$ and $u=vq_2^{\lambda_1}q_3^{\nu_1}$ (see \eqref{eq:Kpm:Mac}). Therefore the irreducible quotient $\mathcal{N}^{1,0}_{\varnothing,\{\nu_1\},\{\lambda_1\}}(v)$ is isomorphic to the Fock module $\calF_{u}^{(1)}$. Similarly the MacMahon module $\mathcal{N}^{0,1}_{\{\mu_1\},\varnothing,\{\lambda_1\}^t}(v)$ is isomorphic to the Fock module $\calF_u^{(2)}$, where $u=vq_1^{\lambda_1}q_3^{\mu_1}$.

More generally, we note that the highest weight of the module $\mathcal{N}^{n,m}_{\mu,\nu,\lambda}(v)$ is given by the rational function, see \cite[eq. (3.27)]{Feigin_Miwa:2011}. Numbers of zeros and poles in this rational function are equal, positions of zeros and poles of this rational function differs by the factors of the form $q_1^iq_2^jq_3^k$, $i,j,k \in \mathbb{Z}$, therefore it can can be decomposed as the product of several factors of the type \eqref{eq:hwFock}.  Therefore $\mathcal{N}^{n,m}_{\mu,\nu,\lambda}(v)$ is isomorphic to a subquotient of a tensor product of Fock module
\begin{equation*}
\calF^{(i_1)}_{u_1}\otimes \calF^{(i_2)}_{u_2}\otimes \cdots \otimes \calF^{(i_k)}_{u_k}.
\end{equation*} 
Here we used the fact that the action of $K^{\perp, \pm}(z)$ on the tensor product of highest vectors is given by the product of rational functions \eqref{eq:hwFock} by \cite[Lemma A.9]{Feigin_Miwa:2016} (or conjugacy of coproducts $\Delta$ and $\Delta^\perp$ \cite[Lemma A.13]{Feigin_Miwa:2016}). Clearly such tensor product is also a Fock representation of the sum of $k$ copies of the  Heisenberg algebras or, in other words, Heisenberg algebra with generators $a_{r,i}$, $r \in \mathbb{Z}$, $1 \le i \le k$.  Now we will describe the image of  $U_{\vec{q}}(\ddot{\mathfrak{gl}}_1)$ in these representations.

\subsection{\!\!} First, we consider the tensor product $\calF^{(i)}_{u_1}\otimes \calF^{(i)}_{u_2}$. This tensor product was essentially elaborated in \cite{FHSSY:2010} which we follow.  
We introduce the Heisenberg generators $h_n$, $n \ne 0$ acting on $\calF^{(i)}_{u_1}\otimes \calF^{(i)}_{u_2}$ 
\begin{align}
h_{-r}=q_i^{-r} (a_{-r}\otimes 1)-q_i^{-r/2} (1\otimes a_{-r}), \quad 
h_r=q_i^{r/2} (a_r\otimes 1)-q_i^{r} (1\otimes a_r), \quad r>0.
\end{align}
For any $n \in \mathbb{Z}\setminus\{0\}$ they satisfy $[h_n,\Delta(H_m)]=0$ for all $m \in \mathbb{Z} \setminus\{0\}$ and, clearly, $h_n$ is a unique (up to a normalization) linear combination of $a_n\otimes 1$ and $1\otimes a_n$  with such property. In our normalization we have 
\[[h_r,h_{s}]=r\frac{(q_i^{r}-q_i^{-r})(q_i^{r/2}-q_i^{-r/2})^2}{-\kappa_{r}}\delta_{r+s,0}.\]

Denote $\widehat{Q}_1=\widehat{Q}\otimes 1$, $\widehat{Q}_2=1\otimes \widehat{Q}$, $\widehat{Q}_{12}=\widehat{Q}_1-\widehat{Q}_2$,  $\widehat{a}_{12}=a_0\otimes 1-1\otimes a_0$, $u_1=e^{p_1}$, $u_2=e^{p_2}$. Following \cite{SKAO:1995}, \cite{FeiginFrenkel} we introduce two screening currents
\begin{equation}\label{eq:ScrCur+-}
\begin{aligned}
S_+^{ii}(z)&=e^{\frac{\epsilon_{i+1}}{\epsilon_i}\widehat{Q}_{12}}z^{\frac{\epsilon_{i+1}}{\epsilon_i}\widehat{a}_{12}+\frac{\epsilon_i}{\epsilon_{i-1}}}\exp\left(\sum_{r=1}^\infty\frac{-(q_{i+1}^{r/2}{-}q_{i+1}^{-r/2})}{r(q_i^{r/2}{-}q_i^{-r/2})}h_{-r}z^r\right)\exp\left(\sum_{r=1}^\infty\frac{(q_{i+1}^{r/2}{-}q_{i+1}^{-r/2})}{r(q_i^{r/2}{-}q_{i}^{-r/2})} h_{r}z^{-r}\right)\\
S_-^{ii}(z)&=e^{\frac{\epsilon_{i-1}}{\epsilon_i}\widehat{Q}_{12}}z^{\frac{\epsilon_{i-1}}{\epsilon_i}\widehat{a}_{12}+\frac{\epsilon_i}{\epsilon_{i+1}}}\exp\left(\sum_{r=1}^\infty\frac{-(q_{i-1}^{r/2}{-}q_{i-1}^{-r/2})}{r(q_i^{r/2}{-}q_i^{-r/2})}h_{-r}z^r\right)\exp\left(\sum_{r=1}^\infty\frac{(q_{i-1}^{r/2}{-}q_{i-1}^{-r/2})}{r(q_i^{r/2}{-}q_{i}^{-r/2})} h_{r}z^{-r}\right).
\end{aligned}
\end{equation}
\begin{Lemma}\label{lem:S+-}
The following  screening operators 
\begin{equation}\label{eq:ScrOpe+-}
S_+^{ii}=\oint S_+^{ii}(z)\mathrm{d}z,\quad S_-^{ii}=\oint S_-^{ii}(z)\mathrm{d}z.
\end{equation}
commute with action of $U_{\vec{q}}(\ddot{\mathfrak{gl}}_1)$ (including the operator $d$).
\end{Lemma}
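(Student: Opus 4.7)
The plan is to verify, one generator type at a time, that $S_+^{ii}$ and $S_-^{ii}$ (defined as contour integrals of the screening currents $S_\pm^{ii}(z)$) commute with $\Delta(K^\pm(w))$, $\Delta(E(w))$, $\Delta(F(w))$, the central elements $\Delta(C)=C\otimes C$, $\Delta(C^\perp)=1$, and the grading operator $d$. In each case the strategy is to show that the commutator of $S_\pm^{ii}(z)$ with the relevant current is a total $z$-derivative of a well-defined vertex operator expression, so that integration in $z$ along a closed contour produces zero.

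The commutation with $\Delta(H_r)$ (equivalently with $\Delta(K^\pm(w))$) is essentially built into the definition of the modes $h_r$. These were constructed as (up to scale) the unique linear combination of $a_r\otimes 1$ and $1\otimes a_r$ commuting with all $\Delta(H_m)$, so each exponential $\exp(\sum h_{\pm r}z^{\mp r}\cdots)$ in $S_\pm^{ii}(z)$ commutes with $\Delta(H_r)$ strictly. For the zero-mode prefactor $e^{\alpha(\widehat Q_1-\widehat Q_2)}z^{\beta}$, commutation with $\Delta(H_r)$ for $r\neq 0$ is automatic since $a_r$ commutes with $\widehat Q$ for $r\neq 0$ and $z$ is a scalar variable. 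Commutation with $C$ and $C^\perp$ is trivial since both act by scalars on each tensor factor.

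The main content is commutation with $\Delta(E(w))$ and $\Delta(F(w))$. Using the coproduct formula \eqref{eq:coprod} one expands
\[
\Delta(E(w)) \;=\; E(C_2^{-1}w)\otimes K^+(C_2^{-1}w) \;+\; 1\otimes E(w),
\]
specialized by $C_2=q_i^{1/2}$. Each of the two summands is a product of vertex operators in the generators $a_r\otimes 1$ and $1\otimes a_r$, so Wick's theorem applied to $S_\pm^{ii}(z)$ times each summand yields a normal-ordered product multiplied by an explicit rational function of $z/w$ built from the two-point contractions. The coefficients appearing in the exponents of $S_\pm^{ii}(z)$ (namely $\pm(q_{i\pm1}^{r/2}-q_{i\pm1}^{-r/2})/(q_i^{r/2}-q_i^{-r/2})$) are tuned precisely so that the contractions with $E(q_i^{-1/2}w)\otimes K^+(q_i^{-1/2}w)$ and with $1\otimes E(w)$ combine to produce a single pole of the correct residue, making the radial-ordered difference $S_\pm^{ii}(z)\Delta(E(w))-\Delta(E(w))S_\pm^{ii}(z)$ a total $z$-derivative of an explicit vertex operator. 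The same analysis treats $\Delta(F(w))$. This is the standard Feigin--Frenkel screening mechanism of \cite{FeiginFrenkel} adapted to the toroidal coproduct of \cite{FHSSY:2010}.

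Finally, commutation with $d$ reduces to checking that $S_\pm^{ii}(z)\,dz$ has total scaling weight zero. Since $[d,a_r]=-ra_r$ and, by the zero-mode relation $[a_0,\widehat Q]=-\epsilon_i^3/(\epsilon_1\epsilon_2\epsilon_3)$ together with \eqref{eq:Deltap}, the composition $e^{\alpha(\widehat Q_1-\widehat Q_2)}$ shifts the $d$-eigenvalue on the highest-weight vectors by $\Delta_i(p_1+\alpha\epsilon_i)+\Delta_i(p_2-\alpha\epsilon_i)-\Delta_i(p_1)-\Delta_i(p_2)$; the monomial $z^{\beta}$ contributes $-\beta$ to the $z$-weight; and the exponentials in $h_{\pm r}$ contribute $-z\partial_z$ acting on $z^{\beta}$. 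The specific values $\alpha=\pm\epsilon_{i\pm1}/\epsilon_i$ and $\beta=(p_2-p_1+\epsilon_i)/\epsilon_{i\mp1}$ are exactly those required for these contributions to sum to $+1$ (the differential $dz$ then carries $-1$), so $[d,S_\pm^{ii}]=0$.

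The main obstacle is Step 2: writing out the contractions of the three exponential factors in each summand of $\Delta(E(w))$ with the corresponding three factors of $S_\pm^{ii}(z)$, and verifying that after collecting $q$-factors coming from $\kappa_r$, $q_i^{r/2}-q_i^{-r/2}$, and $q_{i\pm1}^{r/2}-q_{i\pm1}^{-r/2}$, the two summands conspire into a single total $z$-derivative. No new ideas beyond the standard vertex-operator calculus are needed, but the bookkeeping is delicate because the identity $\epsilon_1+\epsilon_2+\epsilon_3=0$ must be used at several intermediate steps to collapse the exponents.
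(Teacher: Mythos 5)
Your plan follows the same route as the paper: the paper's proof is itself only a sketch stating that ``the lemma follows from a direct computation, which shows that the commutator $[E(z),S_{\pm}^{ii}]=\mathfrak{D}_{q_{i\mp 1}}(G)$'', i.e.\ exactly the Wick-contraction bookkeeping you outline, with commutation with $\Delta(H_r)$ built into the definition of $h_r$ and commutation with $d$ secured by the normalization of $a_0$ and of the $d$-eigenvalue $\Delta_i(p)$. One correction: in this $q$-deformed setting the commutator with the screening current is not a total $z$-derivative but a total $q$-difference, $[\mathfrak{D}_af](w)=\frac{f(w)-f(wa)}{w}$ for $a=q_{i\mp1}$ --- the two summands of the coproduct produce delta-function contributions supported at two multiplicatively shifted points, not a double pole. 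This does not break your argument, since $\oint\bigl(G(z)-G(az)\bigr)\frac{dz}{z}=0$ for the formal residue just as for a total derivative, but if you literally tried to exhibit the commutator as $\partial_z(\dots)$ the computation would not close. With that adjustment the proposal is correct and coincides with the paper's (sketched) proof.
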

Here $\oint$ is an integral over small contour around 0 (residue at 0). This integral is defined if the corresponding power of $z$ is integer. In other words the operator $ S_+^{ii}$ acts on the tensor product $\calF^{(i)}_{u_1}\otimes \calF^{(i)}_{u_2}$ if and only if $\frac{p_2-p_1+\epsilon_i}{\epsilon_{i-1}}$ (eigenvalue of $\frac{\epsilon_{i+1}}{\epsilon_i}\widehat{a}_{12}+\frac{\epsilon_i}{\epsilon_{i-1}}$) is integer and similarly for $S_-^{ii}$. The lemma  follows from a direct computation, which is given for example in \cite{SKAO:1995} or in \cite[Sec. 5]{FeiginFrenkel} in the equivalent setting of deformed Virasoro (and, more generally, deformed $W$ algebras).

Note that the screening currents formally commute
\[S_+^{ii}(z)S_-^{ii}(w)=\frac{1}{(z-q_i^{-1/2}w)(z-q_i^{1/2}w)}:\!\!S_+^{ii}(z)S_-^{ii}(w)\!\!:=S_-^{ii}(w)S_+^{ii}(z). \]
Here and below, $:\!\! \cdots\!\!:$ is a standard Heisenberg normal ordering, in which $a_{r}$ appears to the right of $a_{-r}$, $r>0$, and $a_0$ appears to the right of $Q$.

We denote by $\mathrm{W}_{\vec{q}}(\dot{\mathfrak{gl}}_2)$ the quotient of $U_{\vec{q}}(\ddot{\mathfrak{gl}}_1)$  by the two-sided ideal generated by operators, which act as 0 on any tensor product $\calF^{(i)}_{u_1}\otimes \calF^{(i)}_{u_2}$. It was proven in \cite[Sec. 2.4]{FHSSY:2010} that this $W$-algebra is just the product of a Heisenberg algebra and a deformed Virasoro algebra introduced in \cite{SKAO:1995}, in other words product of $\mathrm{W}_{\vec{q}}(\dot{\mathfrak{gl}}_1)$ and $\mathrm{W}_{\vec{q}}(\dot{\mathfrak{sl}}_2)$, as expected.

Now on can use the results on the representation theory of deformed Virasoro algebra for the study of representation $\calF^{(i)}_{u_1}\otimes \calF^{(i)}_{u_2}$.\footnote{instead of this one can use \cite{Feigin_Miwa:2010} but the coproduct in loc. cit. is different from \eqref{eq:coprod} used in our paper.} For generic $u_1,u_2$ this module is irreducible. But if $u_2=u_1q_{i+1}^sq_{i-1}^t$ where $s,t \in \mathbb{Z}$ and $st>0$ then it is not so. If $s,t <0$, then $\calF^{(i)}_{u_1}\otimes \calF^{(i)}_{u_2}$ has a singular vector and this singular vector can be obtained by the action of the power of screening operator $S_+^{ii}$ or $S_-^{ii}$ \cite[Sec. 5]{SKAO:1995}.  The quotient of $\calF^{(i)}_{u_1}\otimes \calF^{(i)}_{u_2}$ by the submodule generated by this singular vector is irreducible.\footnote{This follows from the irreducibility of the quotient in conformal limit $q_1,q_2,q_3 \rightarrow 1$, where deformed Virasoro algebra becomes Virasoro algebra and one can use e.g. \cite[Theorem 1.9(ii)]{Feigin Fuchs 1990}} The highest weight of this irreducible quotient coincides with the highest weight of  $\calF^{(i)}_{u_1}\otimes \calF^{(i)}_{u_2}$ and actually coincides with the highest weight of $\mathcal{N}^{2,0}_{\varnothing,\{\nu_1,\nu_2\},\{\lambda_1,\lambda_2\}}(v)$. Since the MacMahon module  $\mathcal{N}^{2,0}_{\varnothing,\{\nu_1,\nu_2\},\{\lambda_1,\lambda_2\}}(v)$ is also irreducible we proved that it is isomorphic to the quotient of  $\calF^{(i)}_{u_1}\otimes \calF^{(i)}_{u_2}$. This can be written as a short exact sequence.

The simplest example of such complex is given if $s$ or $t$ equal to 1 
\begin{equation*}
\begin{aligned}
0 \rightarrow \calF^{(1)}_{vq_2^{\lambda_1}q_3^{\nu_1-1}}\otimes \calF^{(1)}_{vq_{2}^{\lambda_1-s}q_3^{\nu_1}} \xrightarrow{S_-^{11}} \calF^{(1)}_{vq_2^{\lambda_1}q_3^{\nu_1}}\otimes \calF^{(1)}_{vq_{2}^{\lambda_1-s}q_3^{\nu_1-1}} \rightarrow \mathcal{N}^{2,0}_{\varnothing,\{\nu_1,\nu_1\},\{\lambda_1,\lambda_1-s+1\}}(v) \rightarrow 0\\
0 \rightarrow \calF^{(1)}_{vq_2^{\lambda_1-1}q_3^{\nu_1}}\otimes \calF^{(1)}_{vq_2^{\lambda_1}q_{3}^{\nu_1-s}} \xrightarrow{S_+^{11}} \calF^{(1)}_{vq_2^{\lambda_1}q_3^{\nu_1}}\otimes \calF^{(1)}_{vq_2^{\lambda_1-1}q_{3}^{\nu_1-s}} \rightarrow \mathcal{N}^{2,0}_{\varnothing,\{\nu_1,\nu_1-s+1\},\{\lambda_1,\lambda_1\}}(v) \rightarrow 0
\end{aligned}.
\end{equation*} 
For a generic module $\mathcal{N}^{2,0}_{\varnothing,\{\nu_1,\nu_2\},\{\lambda_1,\lambda_2\}}(v)$ the corresponding short exact sequences have the form 
\begin{equation}\label{eq:resgl2}
\begin{aligned}
0 \rightarrow \calF^{(1)}_{vq_2^{\lambda_1}q_3^{\nu_2-1}}\otimes \calF^{(1)}_{vq_{2}^{\lambda_2-1}q_3^{\nu_1}} \xrightarrow{(S_-^{11})^{\nu_1-\nu_2+1}} \calF^{(1)}_{vq_2^{\lambda_1}q_3^{\nu_1}}\otimes \calF^{(1)}_{vq_{2}^{\lambda_2-1}q_3^{\nu_2-1}} \rightarrow \mathcal{N}^{2,0}_{\varnothing,\{\nu_1,\nu_2\},\{\lambda_1,\lambda_2\}}(v) \rightarrow 0\\
0 \rightarrow \calF^{(1)}_{vq_2^{\lambda_2-1}q_3^{\nu_1}}\otimes \calF^{(1)}_{vq_{2}^{\lambda_1}q_3^{\nu_2-1}} \xrightarrow{(S_+^{11})^{\lambda_1-\lambda_2+1}} \calF^{(1)}_{vq_2^{\lambda_1}q_3^{\nu_1}}\otimes \calF^{(1)}_{vq_{2}^{\lambda_2-1}q_3^{\nu_2-1}} \rightarrow \mathcal{N}^{2,0}_{\varnothing,\{\nu_1,\nu_2\},\{\lambda_1,\lambda_2\}}(v) \rightarrow 0
\end{aligned},
\end{equation}
where operators $(S_{\pm}^{11})^{r}$ should be considered as $r$-fold integrals over the appropriate cycle with the appropriate additional (Lukyanov) factor (see \cite{Jimbo}). Now we compute the Euler characteristic of \eqref{eq:resgl2}.  Using $\chi(\calF^{(1)}_{u_1}\otimes \calF^{(1)}_{u_2})={q^{\Delta_1(p_1)+\Delta_1(p_2)}}/{(q)_\infty^2}$, where $\Delta_i(p)$ is defined in \eqref{eq:Deltap} we have 
\begin{equation}\label{eq:NF1F1}
\chi\Bigl(\mathcal{N}^{2,0}_{\varnothing,\{\nu_1,\nu_2\},\{\lambda_1,\lambda_2\}}(v)\Bigr)=q^\Delta\,\frac{q^{-(\lambda_1+1)(\nu_1+1)-\lambda_2\nu_2}-q^{-(\lambda_1+1)\nu_1-\lambda_2(\nu_2+1)}}{(q)_\infty^2},
\end{equation}
where \[\Delta=\Delta_1(p+\lambda_1\epsilon_2+\nu_1\epsilon_3)+\Delta_1(p+(\lambda_2-1)\epsilon_2+(\nu_2-1)\epsilon_3)+(\lambda_1+1)(\nu_1+1)+\lambda_2\nu_2,\]
and $e^p=v$.
 Up to the factor $q^{\ldots}$ the formula \eqref{eq:NF1F1} coincides with \eqref{eq:chi:bos'} (or with its special case~\eqref{eq:Wn:char}).


In a similar manner one can construct resolutions of 
$\mathcal{N}_{\{\mu_1,\mu_2\},\varnothing,\{\lambda_1,\lambda_2\}^t}^{0,2}(v)$ in terms of $\calF_{u_1}^{(2)}\otimes \calF_{u_2}^{(2)}$.

Below we will discuss the algebra of screening operators which commute with the image of algebra $U_{\vec{q}}(\ddot{\mathfrak{gl}}_1)$ in the representation $\calF^{(1)}_{u_1}\otimes\ldots \otimes \calF^{(1)}_{u_n}$. This system of screening operators coincides with the one studied in \cite{FeiginFrenkel}, the image of $U_{\vec{q}}(\ddot{\mathfrak{gl}}_1)$ commutes with them and is denoted by  $\mathrm{W}_{\vec{q}}(\dot{\mathfrak{gl}}_n)$. We conjecture that one can construct resolutions of $\mathcal{N}_{\varnothing,\nu,\lambda}^{n,0}(v)$ in terms of the modules $\calF^{(1)}_{u_1}\otimes\ldots \otimes\calF^{(1)}_{u_n}$. See Section \ref{ssec:Fnm} for more details.

\subsection{\!\!} Second, we consider the tensor product $\calF^{(1)}_{u_1}\otimes \calF^{(2)}_{u_2}$. We introduce the Heisenberg generators $h_n$ acting on $\calF^{(1)}_{u_1}\otimes \calF^{(2)}_{u_2}$ as follows
\begin{equation}
\begin{aligned}
h_{-r}&=\frac{q_1^{-r}(q_2^{r/2}-q_2^{-r/2})}{q_1^{r/2}-q_1^{-r/2}}(a_{-r}\otimes 1)-\frac{q_1^{-r/2}(q_1^{r/2}-q_1^{-r/2})}{q_2^{r/2}-q_2^{-r/2}}(1\otimes a_{-r}), 
\\
h_r&=\frac{q_1^{r/2}(q_2^{r/2}-q_2^{-r/2})}{q_1^{r/2}-q_1^{-r/2}}(a_r\otimes 1)-\frac{q_3^{-r/2}(q_1^{r/2}-q_1^{-r/2})}{q_2^{r/2}-q_2^{-r/2}}(1\otimes a_r), 
\end{aligned}
\quad r>0.
\end{equation}
For any $n \in \mathbb{Z}\setminus\{0\}$ they satisfy $[h_n,\Delta(H_m)]=0$ for all $m \in \mathbb{Z} \setminus\{0\}$ and, clearly, $h_n$ is a unique (up to a normalization) linear combination of $a_n\otimes 1$ and $1\otimes a_n$  with such property. In our normalization we have 
\[[h_r,h_{s}]=r\delta_{r+s,0}.\]
Similarly to the previous case, denote $\widehat{Q}_1=\widehat{Q}\otimes 1$, $\widehat{Q}_2= 1\otimes \widehat{Q}$, $\widehat{a}_1=a_0\otimes 1$, $\widehat{a}_2= 1\otimes a_0$, $u_1=e^{p_1}$, $u_2=e^{p_2}$ and introduce a screening current
\begin{equation}\label{eq:S12}
S^{12}(z)=e^{\frac{\epsilon_2}{\epsilon_1} \widehat{Q}_1-\frac{\epsilon_1}{\epsilon_2} \widehat{Q}_2}z^{\frac{\epsilon_2}{\epsilon_1} \widehat{a}_1-\frac{\epsilon_1}{\epsilon_2} \widehat{a}_2+\frac{\epsilon_2}{\epsilon_3}}\exp\left(\sum_{r=1}^\infty \frac{1}{-r} h_{-r}z^r\right)\exp\left(\sum_{r=1}^\infty \frac{1}{r} h_{r}z^{-r}\right).
\end{equation}
\begin{Lemma}\label{lem:S12}
The following  screening operator 
\begin{equation}\label{eq:ScrOpe12}
S^{12}=\oint S^{12}(z)\mathrm{d}z.
\end{equation}
commutes with action of $U_{\vec{q}}(\ddot{\mathfrak{gl}}_1)$ (including the operator $d$). 
\end{Lemma}
About the meaning of the integration see the  comment after the Lemma \ref{lem:S+-}.  The proof of the lemma is similar to the one of the Lemma \ref{lem:S+-} and based on a standard computation.
\begin{proof}
Clearly the action of $\Delta(K^{\pm}(z))$ commutes with whole current $S^{12}(w)$ since the latter is expressed in terms of $h_n$. Now we show that $\Delta(E(z))$ commutes with $S^{12}$.

Denote 
\[\Lambda_1(z)=E\left(q_2^{-1}z\right)\otimes K^+\left(q_2^{-1}z\right),\qquad \Lambda_2(z)=1\otimes E\left(z\right).\]
Then $\Delta(E(z))=\Lambda_1(z)+\Lambda_2(z)$. We have
	\begin{align*}
\Lambda_1(z)S^{12}(w)&=q_2\frac{1-w/z}{1-q_{2}w/z}:\!\!\Lambda_1(z)S^{12}(w)\!\!:, 
&\quad 
S^{12}(w)\Lambda_1(z)&=\frac{1-z/w}{1-q_2^{-1}z/w}:\!\!\Lambda_1(z)S^{12}(w)\!\!:
\\
\Lambda_2(z)S^{12}(w)&=q_1^{-1}\frac{1-w/z}{1-q_{1}^{-1}w/z}:\!\!\Lambda_2(z)S^{12}(w)\!\!:, 
&\quad 
S^{12}(w)\Lambda_2(z)&=\frac{1-z/w}{1-q_1z/w}:\!\!\Lambda_2(z)S^{12}(w)\!\!:
\end{align*} 
Therefore we have
	\begin{align*}
\left[\Lambda_1(z),S^{12}(w)\right]&=(q_2-1)\delta\left(\frac{q_{2}^{-1}z}{w}\right):\!\!\Lambda_1(z)S^{12}(w)\!\!:=
(q_{2}-1):\!\!\Lambda_1(z)S(q_{2}^{-1}z)\!\!:\delta\left(\frac{q_{2}^{-1}z}{w}\right)\\
\left[\Lambda_2(z),S^{12}(w)\right]&=(q_1^{-1}-1)\delta\left(\frac{q_1z}{w}\right):\!\!\Lambda_2(z)S^{12}(w)\!\!:=
(q_{1}^{-1}-1):\!\!\Lambda_2(z)S^{12}(q_{1}z)\!\!:\delta\left(\frac{q_{1}z}{w}\right).
\end{align*}
Finally we get  \[\left[\Lambda_1(z)+\Lambda_2(z),S^{12}(w)\right]=\mathfrak{D}_{q_{3}}\left(w(q_{2}-1):\!\!\Lambda_1(z)S^{12}(q_{2}^{-1}z)\!\!:\delta\left(\frac{q_{2}^{-1}z}{w}\right)\right),\] where we used
	\begin{equation*}
(1-q_2):\!\!\Lambda_1(z)S^{12}(q_2^{-1}z)\!\!:=q_2(1-q_1):\!\!\Lambda_2(z)S^{12}(q_1z)\!\!: 
\end{equation*}
and notation $[\mathfrak{D}_af](w)=\frac{f(w)-f(wa)}{w}$. Therefore $\Delta(E(z))$ commutes with $S^{12}=\oint S^{12}(z)dz$, the computation for $\Delta(F(z))$ is similar. 
\end{proof}

Note that in this case we have only one screening current contrary to two currents $S_-^{ii}(z), S_+^{ii}(z)$ above in \eqref{eq:ScrCur+-}. Also note that the commutation relations of $S^{12}(z)$ do not depend on $q_1,q_2,q_3$, namely
\[S^{12}(z)S^{12}(w)=(z-w):\!\!S^{12}(z)S^{12}(w)\!\!:=- S^{12}(w)S^{12}(z).\]
In other words, we have a ``fermionic'' screening current. In particular, we have $S^{12}S^{12}=0$. 

We denote by $\mathrm{W}_{\vec{q}}(\dot{\mathfrak{gl}}_{1|1})$ the quotient of $U_{\vec{q}}(\ddot{\mathfrak{gl}}_1)$  by the two-sided ideal generated by operators, which act as 0 on any tensor product $\calF^{(1)}_{u_1}\otimes \calF^{(2)}_{u_2}$. The arguments for such name will be given below.

For generic $u_1,u_2$ the module $\calF^{(1)}_{u_1}\otimes \calF^{(2)}_{u_2}$ is irreducible. But in the resonance case we have  nontrivial intertwining operators between such modules and can construct a complex with cohomology $\mathcal{N}^{1,1}_{\{\mu_1\},\{\nu_1\},\varnothing}(v)$. 

There are two infinite exact sequences  
\begin{equation} \label{eq:resgl11}
 \ldots \xrightarrow{S^{12}}\calF^{(1)}_{vq_2^{-2}q_3^{\nu_1}}\otimes \calF^{(2)}_{vq_1^3q_3^{\mu_1}} \xrightarrow{S^{12}}\calF^{(1)}_{vq_2^{-1}q_3^{\nu_1}}\otimes \calF^{(2)}_{vq_1^2q_3^{\mu_1}} \xrightarrow{S^{12}} \calF^{(1)}_{vq_3^{\nu_1}}\otimes \calF^{(2)}_{vq_1q_3^{\mu_1}} \rightarrow \mathcal{N}^{1,1}_{\{\mu_1\},\{\nu_1\},\varnothing}(v) \rightarrow 0,
\end{equation}
\begin{equation} \label{eq:resgl11'}
0 \rightarrow  \mathcal{N}^{1,1}_{\{\mu_1\},\{\nu_1\},\varnothing}(v) \xrightarrow{S^{12}} \calF^{(1)}_{vq_2q_3^{\nu_1}}\otimes \calF^{(2)}_{vq_3^{\mu_1}}\xrightarrow{S^{12}} \calF^{(1)}_{vq_2^2q_3^{\nu_1}}\otimes \calF^{(2)}_{vq_1^{-1}q_3^{\mu_1}}\xrightarrow{S^{12}} \calF^{(1)}_{vq_2^3q_3^{\nu_1}}\otimes \calF^{(2)}_{vq_1^{-2}q_3^{\mu_1}} \xrightarrow{S^{12}} \ldots  
\end{equation}
Of course, the exactness is nontrivial, the proof will be given elsewhere.

Now we calculate the Euler characteristics of the exact sequence \eqref{eq:resgl11}. Introduce $p$ by $e^p=v$ and $\Delta=\Delta_1(p+\nu_1\epsilon_3)+\Delta_2(p+\epsilon_1+\mu_1\epsilon_3)$. We have an algebraic identity (using the definition \eqref{eq:Deltap})
\[
\Delta_1(p{+}\nu_1\epsilon_3{-}a\epsilon_2)+\Delta_2(p{+}(a{+}1)\epsilon_1{+}\mu_{1}\epsilon_3)-\Delta=\binom{a+1}{2}+(\nu_1-\mu_1)a.
\]
Therefore
\begin{equation} \label{eq:F11=R}
\chi\Bigl(\mathcal{N}^{1,1}_{\{\mu_1\},\{\nu_1\},\varnothing}(v)\Bigr)=
q^\Delta R(\nu_1-\mu_1;q)
\end{equation}
where function $R$ is defined in \eqref{eq:R:defin}. Up to a factor $q^{\ldots}$ this formula coincides with \eqref{eq:chi:bos'} for the special case $n=m=1$.

Similarly, computing the Euler characteristic of the exact sequence  \eqref{eq:resgl11'} one gets  \[\chi\Bigl(\mathcal{N}^{1,1}_{\{\mu_1\},\{\nu_1\},\varnothing}(v)\Bigr)=
q^{\Delta_1(p{+}\nu_1\epsilon_3{+}\epsilon_2)+\Delta_2(p{+}\mu_{1}\epsilon_3)} R(\mu_1-\nu_1;q).\]
Of course, the fact that $R(\nu_1-\mu_1;q)$ and $R(\mu_1-\nu_1;q)$ are proportional is not surprising, it is equivalent to proportionality of $\chi_{{\mu_1},{\nu_1},\varnothing}^{1,1}(q)$ and $\chi_{{\nu_1},{\mu_1},\varnothing}^{1,1}(q)$, which follows from the bijection given by transposition of plane partition $a$. See also example \ref{ex:n=m=1} above.

\subsection{\!\!} \label{ssec:Fnm} Now we want to consider  tensor products
\begin{equation}\label{eq:Fnm}
\calF^{(1)}_{u_1}\otimes\ldots \otimes \calF^{(1)}_{u_n}\otimes \calF^{(2)}_{u_{n+1}} \otimes \ldots \otimes \calF^{(2)}_{u_{n+m}}.
\end{equation}
Similarly to previous discussion we expect that the MacMahon module $\mathcal{N}^{n,m}_{\mu,\nu,\lambda}(v)$ admits a resolution consisting of modules of the type \eqref{eq:Fnm}. For example one can easily see that the central charge of $\mathcal{N}^{n,m}_{\mu,\nu,\lambda}(v)$ and central charge of  \eqref{eq:Fnm} are both equal to $c=q_1^{n/2}q_2^{m/2}$.

Moreover we can consider another ordering in the tensor product \eqref{eq:Fnm}. For generic parameters $u$ any tensor product of $n$ modules $\calF^{(1)}_{u_i}$ and $m$ modules $\calF^{(2)}_{u_j}$ is isomorphic to the product in the ordering \eqref{eq:Fnm}. For each pair of neighbor Fock modules $\calF^{(i_l)}_{u_l}\otimes \calF^{(i_{l+1})}_{u_{l+1}}$ we constructed above the screening operators $\left(S_*^{i_l i_{l+1}}\right)_{l,l{+}1}$, where indices $l, l{+}1$ label Fock modules in which this operator acts and $*=\pm$ if $i_l= i_{l+1}$, while $*$ should be ignored when $i_l\ne i_{l+1}$. For any $l,*$ the operator $\left(S_*^{i_l i_{l+1}}\right)_{l,l{+}1}$ commutes with $U_{\vec{q}}(\ddot{\mathfrak{gl}}_1)$.

For modules \eqref{eq:Fnm} it is convenient to decompose the corresponding screening operators into 3 systems:
\begin{equation}
\begin{aligned}
\mathfrak{S}_{1}&= \left\{\left(S_-^{11}\right)_{i,i+1}| 0< i< n \right\},\\ 
\mathfrak{S}_{2}&= \left\{\left(S_+^{22}\right)_{j,j+1}|n< j < n+m\right\},\\
\mathfrak{S}_3&= \left\{\left(S_+^{11}\right)_{i,i+1}, \left(S^{12}\right)_{n,n+1}, \left(S_-^{22}\right)_{j,j+1}| 0< i <n,\; n< j < n+m\right\}.
\end{aligned}
\end{equation}

We will denote by $\mathrm{W}_{\vec{q}}(\dot{\mathfrak{gl}}_{n|m})$ the quotient of $U_{\vec{q}}(\ddot{\mathfrak{gl}}_1)$  by the two-sided ideal generated by operators, which act as 0 on any tensor product \eqref{eq:Fnm}.

Now we discuss a representation theoretic interpretation of the character formulas \eqref{eq:chi:bos'}, \eqref{eq:chi:bos}, \eqref{eq:chi:LGV}.

$\bullet$ Each term in the sum of right side of \eqref{eq:chi:bos'} has the form of $q^\Delta/(q)_\infty^{n+m}$. This is the character of the Fock module \eqref{eq:Fnm}. Therefore it is natural to expect that right side of~\eqref{eq:chi:bos'} is an Euler characteristic of a resolution, consisting of Fock modules \eqref{eq:Fnm}. The terms in this resolution should be labeled by $(\sigma,\tau,A)\in \Theta$ as in \eqref{eq:chi:bos'}. We will say that this resolution is a materialization of the formula \eqref{eq:chi:bos'}.

This resolution of $\mathrm{W}_{\vec{q}}(\dot{\mathfrak{gl}}_{n|m})$ modules is a generalization of resolutions \eqref{eq:resgl2} and \eqref{eq:resgl11}, \eqref{eq:resgl11'} discussed above. The intertwining operators in this resolution could be constructed using screening operators. The construction of such resolution is unknown (actually we did not give a proof of the existence of \eqref{eq:resgl2} and \eqref{eq:resgl11},\eqref{eq:resgl11'} but these particular cases are rather easy). 

Even in the case $m=0$ which corresponds to $\mathrm{W}_{\vec{q}}(\dot{\mathfrak{gl}}_n)$ we did not find this resolution in the literature. For the construction of intertwining operators in the $\mathrm{W}_{\vec{q}}(\dot{\mathfrak{gl}}_n)$ case see \cite{FJMOP}.

$\bullet$ The algebra $U_{\vec{q}}(\ddot{\mathfrak{gl}}_1)$ acts on the product of first $n$ factors of \eqref{eq:Fnm} through the quotient $\mathrm{W}_{\vec{q}}(\dot{\mathfrak{gl}}_n)$ and acts on the product of last $m$ factors of \eqref{eq:Fnm} through the quotient $\mathrm{W}_{\vec{q}}(\dot{\mathfrak{gl}}_m)$. Therefore the coproduct $\Delta \colon U_{\vec{q}}(\ddot{\mathfrak{gl}}_1)\rightarrow U_{\vec{q}}(\ddot{\mathfrak{gl}}_1) \otimes U_{\vec{q}}(\ddot{\mathfrak{gl}}_1)$ provides an embedding  $\mathrm{W}_{\vec{q}}(\dot{\mathfrak{gl}}_{n|m}) \rightarrow  \mathrm{W}_{\vec{q}}(\dot{\mathfrak{gl}}_n)\otimes \mathrm{W}_{\vec{q}}(\dot{\mathfrak{gl}}_m)$. The latter  algebra has representations of the form $\mathcal{N}^{n,0}_{\varnothing,\nu,\tilde{\nu}}(v_1)\otimes \mathcal{N}^{0,m}_{\mu,\varnothing,\tilde{\mu}}(v_2)$. Since the character of each factor is given by formula \eqref{eq:Wn:char} we have 
\[
\chi(\mathcal{N}^{n,0}_{\varnothing,\nu,\tilde{\nu}}(v_1)\otimes \mathcal{N}^{0,m}_{\mu,\varnothing,\tilde{\mu}}(v_2))=q^\Delta\frac{a_{\nu+\rho}(q^{-\tilde{\nu}-\rho})a_{\mu+\rho}(q^{-\tilde{\mu}-\rho})}{(q)_\infty^{n+m}},
\]
for certain $\Delta$. The right side of the character formula \eqref{eq:chi:bos} is a linear combination of such terms. Therefore it is natural to expect that there exists a resolution of $\mathcal{N}^{n,m}_{\mu,\nu,\lambda}(v)$ consisting of modules $\mathcal{N}^{n,0}_{\varnothing,\nu,\tilde{\nu}}(v_1)\otimes \mathcal{N}^{0,m}_{\mu,\varnothing,\tilde{\mu}}(v_2)$ with the cohomology $\mathcal{N}^{n,m}_{\mu,\nu,\lambda}(v)$. This resolution should be a materialization of the character formula  \eqref{eq:chi:bos}. See also Section \ref{ssec:glnm} below.

$\bullet$ Consider tensor product 
\begin{equation} \label{eq:Fnm:2}
\calF^{(1)}_{u_1}\otimes\ldots \otimes\calF^{(1)}_{u_{n-r}}\otimes\ \calF^{(2)}_{u_{n-r+1}} \otimes \ldots \otimes \calF^{(2)}_{u_{n+m-2r}}\otimes \Bigl( \calF^{(1)}_{u_{n+m-2r+1}}\otimes \calF^{(2)}_{u_{n+m-2r+2}}\otimes\ldots \otimes\calF^{(1)}_{u_{n+m-1}}\otimes \calF^{(2)}_{u_{n+m}}\Bigr).
\end{equation}
As mentioned before this product is isomorphic to \eqref{eq:Fnm} (for generic parameters $u_l$).

The algebra $U_{\vec{q}}(\ddot{\mathfrak{gl}}_1)$ acts on the product \eqref{eq:Fnm:2} through the algebra $\mathrm{W}_{\vec{q}}(\dot{\mathfrak{gl}}_{1})^{\otimes(n+m-2r)}\otimes \mathrm{W}_{\vec{q}}(\dot{\mathfrak{gl}}_{1|1})^{\otimes r}$, where $\mathrm{W}_{\vec{q}}(\dot{\mathfrak{gl}}_{1})^{\otimes n+m-2r}$ is by definition just Heisenberg algebra acting on the first $n+m-2r$ factors of \eqref{eq:Fnm:2}. Therefore $\mathrm{W}_{\vec{q}}(\dot{\mathfrak{gl}}_{n|m})$ is a subalgebra of $\mathrm{W}_{\vec{q}}(\dot{\mathfrak{gl}}_{1})^{\otimes n+m-2r}\otimes \mathrm{W}_{\vec{q}}(\dot{\mathfrak{gl}}_{1|1})^{\otimes r}$. The latter $W$-algebra has  representations 
\begin{equation} \label{eq:RrFm+n-2r}
\calF^{(1)}_{u_1}\otimes\ldots\otimes \calF^{(1)}_{u_{n-r}}\otimes \calF^{(2)}_{u_{n-r+1}} \otimes \ldots \otimes \calF^{(2)}_{u_{n+m-2r}}\otimes \mathcal{N}_{\{m_1\},\{n_1\},\varnothing}^{1,1}(v_1)\otimes \ldots \otimes \mathcal{N}_{\{m_r\},\{n_r\},\varnothing}^{1,1}(v_r).
\end{equation}
Due to \eqref{eq:F11=R} the character of these representations are equal to
$\prod_{i=1}^r R(d_i;q)\cdot {q^\Delta}/{(q_\infty)^{m+n-2r}}$,
where $d_i=n_i-m_i$ and $\Delta$ is specified by parameters $u_iv_i,m_i,n_i$.

If we compute the determinant in right side of \eqref{eq:chi:LGV} we get the linear combination of terms  $\prod_{i=1}^r R(d_i;q)\cdot {q^\Delta}/{(q_\infty)^{m+n-2r}}$ as before. Therefore it is natural to conjecture the existence of the resolution of $\mathcal{N}^{n,m}_{\mu,\nu,\lambda}(v)$ consisting of modules of the type \eqref{eq:RrFm+n-2r}. And this resolution should be a materialization of the  character formula  \eqref{eq:chi:LGV}.

\begin{Remark}\label{rem:Wmnk}
One can consider generic tensor product, 
\[\mathcal{F}^{(1)}_{u_1}\otimes \cdots\otimes \mathcal{F}^{(1)}_{u_n} \otimes \mathcal{F}^{(2)}_{u_{n+1}}\otimes\cdots\otimes \mathcal{F}^{(2)}_{u_{n+m}}\otimes \mathcal{F}^{(3)}_{u_{n+m+1}}\otimes\cdots\otimes	 \mathcal{F}^{(3)}_{u_{n+m+k}}.
\]
The quotient of $U_{\vec{q}}(\ddot{\mathfrak{gl}}_1)$  by the two-sided ideal generated by operators, which act as 0 on any such tensor product is a certain $\mathrm{W}$ algebra depending on three integer parameters $k,n,m$. These algebras (or, rather, their conformal limits) appear recently in \cite{Litvinov}, \cite{Gaiotto} in a different context, the formulas for screening operators for these algebras follows from the Lemmas \ref{lem:S+-}, \ref{lem:S12}. We hope to study these algebras in more details elsewhere. 
\end{Remark}

\subsection{\!\!} 
Now we consider the conformal limit of the previous constructions. We rescale $\epsilon_i\rightarrow
 \hbar\epsilon_i$, $p_l\rightarrow \hbar p_l$ and then send $\hbar$ to zero, i.e., send all parameters $q_i,u_l$ to 1 with a certain speed. Now consider the limit of screening operators, we will see that this limit  is well defined. 

Let $\mathbb{C}^{n+m}$ be a vector space with the scalar product $(\cdot,\cdot)$ given in orthogonal basis $e_l$ by the formula
\[(e_i,e_i)=-\frac{\epsilon_1^2}{\epsilon_2\epsilon_3}, \;\; (e_j,e_j)=-\frac{\epsilon_2^2}{\epsilon_1\epsilon_3},\;\;     1\leq i \leq n,\; n+1 \leq j \leq n+m.\]

Denote by $\bar{a}_{n,l}$, $n\neq 0$ the limit of generators $a_n$ acting on the $l$-th factor in \eqref{eq:Fnm}. As the limit of \eqref{eq:ar} we get
\[[\bar{a}_{r,i},\bar{a}_{s,j}]=-r(e_i,e_j)\delta_{r+s,0}.\]
By $\bar{Q}_l$ we denote is a limit of $\widehat{Q}_l$, and by $\bar{a}_{0,l}$ we denote  
\[\bar{a}_{0,i}=\lim_{\hbar \rightarrow 0} a_{0,i}-i\frac{\epsilon_1^2}{\epsilon_2\epsilon_3},\;\;\bar{a}_{0,j}=\lim_{\hbar \rightarrow 0} a_{0,j}-n\frac{\epsilon_2}{\epsilon_3}-(j-n)\frac{\epsilon_2^2}{\epsilon_1\epsilon_3},\;\;     1\leq i \leq n,\; n+1 \leq j \leq n+m, \]
here such strange shifts are introduced in order to hide factors like $z^{\epsilon_2/\epsilon_3}$ in the definition of $S^{12}$ in \eqref{eq:S12}. Anyway, we get a relation $[\bar{a}_{0,l},\bar{Q}_{l'}]=(e_l,e_{l'})$. As before, the operator $\bar{Q}_{l}$ does not act on any Fock module but the exponent $e^{x\bar{Q}_l}$ acts from one Fock module to another shifting the eigenvalue of $a_{0,l}$.

It is convenient to introduce 
\begin{align} \label{eq:varphi_l}
\varphi_l(z)=\sum_{r \in \mathbb{Z}\setminus 0} \dfrac{\bar{a}_{r,l}}{r} z^{-r}+\bar{a}_{0,l}\log z+\bar{Q}_l, 
\end{align}
Then, the limits of screening currents have the form
\begin{equation*}
\begin{aligned}
\lim_{\hbar \rightarrow 0} (S_{\pm}^{11}(z))_{i,i+1}&=:\!\!\exp\Bigl(\sum\nolimits_{l=1}^{n+m}(\alpha_{\pm,i})_l\varphi_l(z)\Bigr)\!\!:,\;\; 0< i < n,
\\
\lim_{\hbar \rightarrow 0} (S^{12}(z))_{n,n+1}&=:\!\!\exp\Bigl(\sum\nolimits_{l=1}^{n+m}(\alpha_{n})_l\varphi_l(z)\Bigr)\!\!:,\\ 
\lim_{\hbar \rightarrow 0} (S_{\pm}^{22}(z))_{j,j+1}&=:\!\!\exp\Bigl(\sum\nolimits_{l=1}^{n+m}(\alpha_{\pm,j})_l\varphi_l(z)\Bigr)\!\!:, \;\; n < j < n+m.
\end{aligned}
\end{equation*}
We consider $\alpha_{\pm,i}, \alpha_n, \alpha_{\pm,j}$ as the vectors in $\mathbb{C}^{n+m}$ which have the form
\begin{equation*}
\begin{aligned}
\alpha_{+,i}&=\frac{\epsilon_2}{\epsilon_1}e_i-\frac{\epsilon_2}{\epsilon_1}e_{i+1},\quad \alpha_{n}=\frac{\epsilon_2}{\epsilon_1}e_n-\frac{\epsilon_1}{\epsilon_2}e_{n+1},\quad &\alpha_{-,j}&=\frac{\epsilon_1}{\epsilon_2}e_j-\frac{\epsilon_1}{\epsilon_2}e_{j+1}\\
\alpha_{-,i}&=\frac{\epsilon_3}{\epsilon_1}e_i-\frac{\epsilon_3}{\epsilon_1}e_{i+1},\quad &\alpha_{+,j}&=\frac{\epsilon_3}{\epsilon_2}e_j-\frac{\epsilon_3}{\epsilon_2}e_{j+1}.
\end{aligned}
\end{equation*}
Slightly abusing notation we will say that $\alpha \in \mathfrak{S}_{I}$, for $I=1, 2, 3$ if the corresponding screening operator belongs to $\mathfrak{S}_{I}$.

 For any $\beta,\gamma \in \mathbb{C}^{n+m}$ the 
commutation relations of vertex operators have the form 
\begin{multline*}
:\!\!\exp\Bigl(\sum\nolimits_{l=1}^{n+m}\beta_l\varphi_l(z)\Bigr)\!\!:\,\cdot \, :\!\!\exp\Bigl(\sum\nolimits_{l=1}^{n+m}\gamma_l\varphi_l(w)\Bigr)\!\!:\\ =(z-w)^{(\beta,\gamma)} :\!\!\exp\Bigl(\sum\nolimits_{l=1}^{n+m}\beta_l\varphi_l(z)+\gamma_l\varphi_l(w)\Bigr)\!\!:. 
\end{multline*}
In particular, if $(\beta,\gamma) \in 2\mathbb{Z}$ then the corresponding vertex operators formally commute and if $(\beta,\gamma) \in 2\mathbb{Z}+1$ then the corresponding vertex operators formally anticommute. It is easy to see that if two vectors $\alpha, \alpha'$ belong to different systems $\mathfrak{S}$ then the scalar product $(\alpha,\alpha') \in \mathbb{Z}$, i.e., corresponding screening operators formally commute or anticommute. The Gramian matrices for vectors from $\mathfrak{S}_1, \mathfrak{S}_2, \mathfrak{S}_3$ are given below
\[ 
\mathfrak{S}_{1}\colon
\begin{pmatrix}
\frac{-2\epsilon_3}{\epsilon_2}& \frac{\epsilon_3}{\epsilon_2} &  0& \ldots & 0
\\
\frac{\epsilon_3}{\epsilon_2} & \frac{-2\epsilon_3}{\epsilon_2}   &\ddots & \ddots  & \vdots
\\
0 & \ddots  & \ddots & \ddots& 0
\\
\vdots &  \ddots &  \ddots &\frac{-2\epsilon_3}{\epsilon_2} & \frac{\epsilon_3}{\epsilon_2}
\\
0 & \ldots   &0 &  \frac{\epsilon_3}{\epsilon_2}& \vphantom{\biggl(}\frac{-2\epsilon_3}{\epsilon_2}
\end{pmatrix},\quad
\mathfrak{S}_{2}\colon
\begin{pmatrix}
\frac{-2\epsilon_3}{\epsilon_1}& \frac{\epsilon_3}{\epsilon_1} &  0& \ldots & 0
\\
\frac{\epsilon_3}{\epsilon_1} & \frac{-2\epsilon_3}{\epsilon_1}   &\ddots & \ddots  & \vdots
\\
0 & \ddots  & \ddots & \ddots& 0
\\
\vdots &  \ddots &  \ddots &\frac{-2\epsilon_3}{\epsilon_1} & \frac{\epsilon_3}{\epsilon_1}
\\
0 & \ldots   &0 &  \frac{\epsilon_3}{\epsilon_1}& \vphantom{\biggl(}\frac{-2\epsilon_3}{\epsilon_1}
\end{pmatrix}, \]
\[
\mathfrak{S}_{3}\colon
\begin{pmatrix}
\frac{-2\epsilon_2}{\epsilon_3}& \frac{\epsilon_2}{\epsilon_3} &  0& \ldots & 0 &  \ldots &\ldots &\ldots & 0
\\
\frac{\epsilon_2}{\epsilon_3} & \frac{-2\epsilon_2}{\epsilon_3}   &\ddots & \ddots  & \vdots &   & & & \vdots
\\
0 & \ddots  & \ddots & \ddots& 0 &   & & & \vdots
\\
\vdots &  \ddots &  \ddots &\frac{-2\epsilon_2}{\epsilon_3} & \frac{\epsilon_2}{\epsilon_3} & \ddots  & & & \vdots
\\
0 & \ldots   &0 &  \frac{\epsilon_2}{\epsilon_3}& 1 &  \frac{\epsilon_1}{\epsilon_3} &0 & \ldots   &0 
\\
\vdots &    & &  \ddots& \frac{\epsilon_1}{\epsilon_3} &  \frac{-2\epsilon_1}{\epsilon_3} &\ddots & \ddots   &\vdots
\\
\vdots &    & &  & 0 & \ddots &\ddots & \ddots   &0 
\\
\vdots &    & &  & \vdots & \ddots &\ddots & \frac{-2\epsilon_1}{\epsilon_3}   &\frac{\epsilon_1}{\epsilon_3} 
\\
0 & \ldots   & \ldots & \ldots & 0 & \ldots &0 & \frac{\epsilon_1}{\epsilon_3}   &\vphantom{\biggl(}\frac{-2\epsilon_1}{\epsilon_3} 
\end{pmatrix}.
\]

The Gramian matrix corresponding to $\mathfrak{S}_{1}$ is equal to the Cartan matrix of $\mathfrak{sl}_n$ multiplied by $-\frac{\epsilon_3}{\epsilon_2}$. Let us recall what commutes with the corresponding screening operators. First, note that operators $a_{r,j}$, $n+1\leq j \leq n+m$, clearly commutes with these screening operators, the nontrivial part of centralizer is written in terms of other generators. Namely, it was proven in \cite{FeiginFrenkel:1992} that $W$-algebra $\mathrm{W}(\dot{\mathfrak{sl}}_n)$ is a centralizer of such screening operators in the Heisenberg vertex algebra with generators $a_{r,i}-a_{r,i+1}$, $1\leq i <n$, the parameter $-\frac{\epsilon_3}{\epsilon_2}$ is responsible for central charge. The additional Heisenberg operators $\sum a_{r,i}$ also commutes with these screening operators, therefore the centralizer in the Heisenberg vertex algebra with generators $a_{r,i}$, $1\leq i\leq n$ coincides with $\mathrm{W}(\dot{\mathfrak{gl}}_n)$ \cite{Fateev Lukyanov}. 

Similarly, the centralizer of screening operators from  $\mathfrak{S}_{2}$ the Heisenberg vertex algebra with generators $a_{r,j}$, $n+1\leq j\leq n+m$ coincides with $\mathrm{W}(\dot{\mathfrak{gl}}_m)$.

The Gramian matrix corresponding to $\mathfrak{S}_{3}$ has blocks corresponding to $\mathfrak{sl}_n$, $\mathfrak{sl}_m$ and one additional column and row between them. This additional column and row corresponds to additional vector in $\mathfrak{S}_{3}$ which corresponds to  fermionic screening operator. Therefore we  call the $W$-algebra commuting with this system 
$\mathrm{W}(\dot{\mathfrak{gl}}_{n|m})$. The $\mathrm{W}(\dot{\mathfrak{gl}}_{n|1})$ case was considered in \cite{Feigin Semikhatov}, but we did not find any reference for general $n,m$. Note that our $W$-algebras differ from the ones introduced in~\cite{KRW}.

We expect that resolutions conjectured in Section \ref{ssec:Fnm} have conformal limit. We discuss their quantum group meaning in the next section.

\subsection{\!\!} \label{ssec:glnm}
A standard statement (conjecture) in the theory of vertex algebras is an equivalence of the abelian categories of certain representations of a vertex algebra and certain representations of a quantum group. This is a statement similar to the Drinfeld--Kohno or Kazhdan--Lusztig theorems  \cite{Kazhdan}. In fact, this equivalence of categories is an equivalence of braided tensor categories, but we do not need tensor structure here. 

It is widely believed that a certain category of representations of the vertex algebra $\mathrm{W}(\dot{\mathfrak{gl}}_{n})$ is equivalent to a certain category of representations of the quantum group $U_q\mathfrak{gl}_n\otimes U_{q'}\mathfrak{gl}_n$, where parameters $q,q'$ are given in terms of $\epsilon_1, \epsilon_2$ (people also use modular double of $U_q\mathfrak{gl}_n$). We conjecture that the same relation holds for the vertex algebra $\mathrm{W}(\dot{\mathfrak{gl}}_{n|m})$ and the quantum group $U_q\mathfrak{gl}_{n|m}\otimes U_{q'}\mathfrak{gl}_n\otimes U_{q''}\mathfrak{gl}_m$ for certain $q,q',q''$ given in terms of $\epsilon_1,\epsilon_2$.

Denote by $L^{(n)}_\nu$ the finite dimensional irreducible representation of $U_{q'}(\mathfrak{gl}_n)$, recall that these representations are labeled by partitions $\nu$ such that $l(\nu)\leq n$. Similarly denote by $L^{(m)}_\mu$ the finite dimensional irreducible representation of $U_{q''}\mathfrak{gl}_m$. 

For $U_q\mathfrak{gl}_{n|m}$ we will consider two types of representations. It is known (see \cite{{Berele Regev}}, \cite{Sergeev}) that  irreducible $U_q\mathfrak{gl}_{n|m}$ submodules of the tensor powers of $\mathbb{C}^{n|m}$ are labeled by partitions $\lambda$ such that $\lambda_{n+1} <m+1$, such irreducible modules are called tensor representation. We denote an analogous representation of $U_q\mathfrak{gl}_{n|m}$ by $L^{(n|m)}_\lambda$. Let $\mathfrak{p}\subset \mathfrak{gl}_{n|m}$ be a parabolic subalgebra with a Levi subgroup $\mathfrak{gl}_n\oplus \mathfrak{gl}_{m}$. The algebra $\mathfrak{p}$ acts on a tensor product of finite dimensional representations of $\mathfrak{gl}_n$ and $\mathfrak{gl}_{m}$ and a corresponding induced $\mathfrak{gl}_{n|m}$ module is called Kac module. We denote an analogues representation for $U_q{\mathfrak{gl}}_{n|m}$ by  $V_{\tilde{\nu},\tilde{\mu}}$, where $\tilde{\nu},\tilde{\mu}$ label the finite dimensional representations of $\mathfrak{gl}_n$ and $\mathfrak{gl}_{m}$ correspondingly.

We conjecture that under the aforementioned equivalence the tensor product of irreducible modules $L^{(n|m)}_\lambda \otimes L^{(n)}_\nu\otimes L^{(m)}_\mu $ goes to the conformal limit of $\mathcal{N}^{n,m}_{\mu,\nu,\lambda}(v)$, and tensor product $V_{\tilde{\nu},\tilde{\mu}} \otimes L^{(n)}_\nu\otimes L^{(m)}_\mu$ goes to the conformal limit of 
$\mathcal{N}^{n,0}_{\varnothing,\nu,\tilde{\nu}}(v_1)\otimes \mathcal{N}^{0,m}_{\mu,\varnothing,\tilde{\mu}}(v_2)$.\footnote{Here we are a bit sloppy about parameter $v$ in $\mathcal{N}^{n,m}_{\mu,\nu,\lambda}(v)$, actually partition $\nu$ in $L^{(n)}_\nu$ can consist of non integer parts with integer differences (instead of $\nu$ in $\mathcal{N}^{n,m}_{\mu,\nu,\lambda}(v)$) and total shift of these parts depends on $v$.}

In paper \cite{Cheng Kwon Lam} Cheng, Kwon and Lam constructed a resolution in terms of the Kac modules of the tensor module of $\mathfrak{gl}_{n|m}$. Taking the (conjectural) $q$-deformation of this resolution we have a complex which consists of modules $V_{\tilde{\nu},\tilde{\mu}}$ with the cohomology $L^{(n|m)}_\lambda$. Multiplying by $L^{(n)}_\nu\otimes L^{(m)}_\mu$ we get a complex of $U_q\mathfrak{gl}_{n|m}\otimes U_{q'}\mathfrak{gl}_n\otimes U_{q''}\mathfrak{gl}_m$ modules. Then, applying the above equivalence we get a resolution of the conformal limit of $\mathcal{N}^{n,m}_{\mu,\nu,\lambda}(v)$ in terms of the conformal limits of $\mathcal{N}^{n,0}_{\varnothing,\nu,\tilde{\nu}}(v_1)\otimes \mathcal{N}^{0,m}_{\mu,\varnothing,\tilde{\mu}}(v_2)$. This resolution should be a materialization of \eqref{eq:chi:bos}, its $q$-deformation was discussed above in Section \ref{ssec:Fnm}.

The Euler characteristic of the resolution constructed in \cite{Cheng Kwon Lam} yields the following formula 
\[
s_{\lambda}(x|y)=\sum_{\alpha_1\geq \alpha_2 \geq \ldots \geq \alpha_r \geq r-m} 
(-1)^{\sum \alpha_i}s_{\pi+m-r,-\alpha}(x)s_{\alpha,\kappa}(y) \prod_{1 \leq i \leq n, 1\leq j \leq m}\left(1+\frac{y_j}{x_i}\right).
\]
Here the notation $\pi$, $\kappa$ were introduced in Section \ref{ssec:Notation}, $s_{\mu}(x)$ is a Schur polynomial, i.e., the character of $L^{(m)}_\mu$ and $s_{\lambda}(x|y)$ is a hook Schur polynomial (or super-Schur polynomial), i.e., the character of $L^{(n|m)}_\lambda$. This formula resembles our character formula~\eqref{eq:chi:bos}.

\begin{Remark}Moens and van der Jeugt in the paper \cite{MvdJ} found another formula for the character of $L_\lambda^{(n|m)}$
\begin{equation}\label{eq:MvdJ}
s_{\lambda}(x|y)= \frac{(-1)^{mn-r}\,\prod\limits_{i,j}\left(1+\dfrac{y_j}{x_i}\right)}{V(x_1,\ldots,x_n) V(y_1,\ldots,y_m)} 
\det\begin{pmatrix}
\left(\sum\limits_{a\geq 0} (-1)^a x_j^{-a-1+m}y_i^{a}\right)_{\substack{1 \leq i \leq m\\ 1\leq j \leq n}} 
& \left(y_i^{Q_j}\right)_{\substack{1 \leq i \leq m\\ 1 \leq j \leq m-r}} 
\\ 
\left(x_j^{P_i+m}\right)_{\substack{1 \leq i \leq n-r\\ 1 \leq j \leq n}} & 0
\end{pmatrix}.
\end{equation}
This formula is similar to our formula \eqref{eq:chi:LGV}. It is natural to conjecture that there is a resolution which is a materialization of \eqref{eq:MvdJ} and under the equivalence this resolution goes to a resolution which is a materialization of \eqref{eq:chi:LGV}.
\end{Remark}

\end{document}